\pgfplotsset{compat=1.7}
\definecolor{color1}{RGB}{68,119,170}   
\definecolor{color2}{RGB}{102,204,238}  
\definecolor{color3}{RGB}{34,136,51}    
\definecolor{color4}{RGB}{17,119,51}    
\definecolor{color5}{RGB}{204,187,68}   
\definecolor{color6}{RGB}{221,204,119}  
\definecolor{color7}{RGB}{204,102,119}  
\definecolor{color8}{RGB}{136,34,85}    
\definecolor{color9}{RGB}{170,51,119}   
\definecolor{color10}{RGB}{102,102,102} 
\newcommand{\abs}[1]{\left\lvert#1\right\rvert}
\newcommand\norm[1]{\lVert#1\rVert}
\newcommand\inner[2]{\langle #1, #2 \rangle}
\newcommand\p[1]{\mathord{( #1 )}}
\newcommand\set[1]{\mathord{\left\{ #1 \right\}}}
\newcommand{\reals}{\mathbb{R}}
\newcommand{\N}{\mathbf{N}}
\newcommand{\naturals}{\mathbb{N}_0}
\newcommand{\sym}{\mathbb{S}}
\newcommand{\calH}{\mathcal{H}}
\newcommand{\prox}{{\mathrm{prox}}}
\DeclareMathOperator*{\minimize}{minimize}
\DeclareMathOperator*{\maximize}{maximize}
\DeclareMathOperator*{\argmin}{argmin}
\DeclareMathOperator*{\Argmin}{Argmin}
\DeclareMathOperator*{\zer}{zer}
\DeclareMathOperator*{\dom}{dom}
\DeclareMathOperator*{\Id}{Id}
\newcommand{\xmiddle}[1]{\;\middle#1\;}
\renewcommand{\N}{\mathbb{N}_{0}}
\newcommand{\V}{\mathcal V}
\newcommand{\LF}{L_F}
\newcommand{\QuadMinMaxParam}{\omega}
\newcommandx{\seq}[3][{2=k\in\naturals},{3={}}]{(#1)_{#2}^{#3}}
    \newif\ifshowold\showoldtrue
    \newif\ifshownew\shownewtrue
    \colorlet{newcolor}{black}
    \colorlet{oldcolor}{black!30}
    \newcommand{\disablecolorlinks}{\def\HyColor@UseColor##1{}}
    \newcommand{\new}[2][]{{%
        \ifshownew
            \disablecolorlinks
            \color{newcolor}{}#2%
        \fi
    }}
\def\th@plain{
  \thm@headfont{\normalfont\sffamily\bfseries}
  \itshape 
}
\def\th@definition{
  \thm@headfont{\normalfont\sffamily\bfseries}
  \thm@notefont{\normalfont\sffamily\bfseries}
}
\newtheoremstyle{myStyle1}
  {0.3cm}
  {0.3cm}
  {\itshape}
  {}
  {\normalfont\sffamily\bfseries}
  {:}
  {.5em}
  {}
\newtheoremstyle{myStyle2}
  {0.3cm}
  {0.3cm}
  {}
  {}
  {\normalfont\sffamily\bfseries}
  {:}
  {.5em}
  {}
\providecommand{\keywords}[1]
{\textbf{\textsf{Keywords.}} #1}
\providecommand{\MSC}[1]
{\textbf{\textsf{Mathematics subject classification 2020.}} #1}
\def\maketag@@@#1{\hbox{\m@th\normalfont\normalsize#1}}
\newcommand{\pushright}[1]{\ifmeasuring@#1\else\omit\hfill$\displaystyle#1$\fi\ignorespaces}
\newcommand{\pushleft}[1]{\ifmeasuring@#1\else\omit$\displaystyle#1$\hfill\fi\ignorespaces}
\crefname{assumption}{assumption}{assumptions}
\crefname{enumi}{Assumption}{Assumptions}  
\theoremstyle{myStyle1}
\newtheorem{theorem}{Theorem}[section]
\newtheorem{proposition}[theorem]{Proposition}
\newtheorem{corollary}[theorem]{Corollary}
\newtheorem{lemma}[theorem]{Lemma}
\newtheorem{definition}[theorem]{Definition}
\theoremstyle{myStyle2}
\newtheorem{remark}[theorem]{Remark}
\newtheorem{example}[theorem]{Example}
\newcommand{\INITIALIZE}{\item[\textbf{Initialize:}]}
\newcommand{\REQUIRE}{\item[\textbf{Require:}]}
\newcommand{\INPUT}{\item[\textbf{Input:}]}
\newcommand{\OUTPUT}{\item[\textbf{Output:}]}
\crefname{ALG@line}{Step}{Steps}
\Crefname{ALG@line}{Step}{Steps}
\renewcommand{\thealgorithm}{\arabic{algorithm}}
\newcounter{algorithmicH}
\let\OLDalgorithmic\algorithmic
\def\fixalgref{%
  \phantomsection
  \addtocounter{ALG@line}{-1}%
  \refstepcounter{ALG@line}%
}
\let\OLDState\State  \renewcommand{\State}{\OLDState\fixalgref}
\let\OLDIf\If        \renewcommand{\If}[1]{\OLDIf{#1}\fixalgref}
\let\OLDFor\For      \renewcommand{\For}[1]{\OLDFor{#1}\fixalgref}
\let\OLDElse\Else    \renewcommand{\Else}{\OLDElse\fixalgref}
\let\OLDElsIf\ElsIf  \renewcommand{\ElsIf}[1]{\OLDElsIf{#1}\fixalgref}
\let\OLDWhile\While  \renewcommand{\While}[1]{\OLDWhile{#1}\fixalgref}
\renewcommand{\algorithmic}{%
  \setcounter{ALG@line}{0}%
  \stepcounter{algorithmicH}%
  \OLDalgorithmic%
}
\renewcommand{\theHALG@line}{\thealgorithm.\arabic{ALG@line}}
  \newcommand{\algnamefont}{\ttfamily} 
  \newcommand{\FLEX}{{\hyperref[alg:FLEX]{\algnamefont FLEX}}}
  \newcommand{\refFLEX}{\FLEX\ (\Cref{alg:FLEX})}  
  \newcommand{\IFLEX}{{\hyperref[alg:I-FLEX]{\algnamefont I-FLEX}}}
  \newcommand{\refIFLEX}{\IFLEX\ (\Cref{alg:I-FLEX})}
  \newcommand{\ProxFLEX}{{\hyperref[alg:Prox-FLEX]{\algnamefont Prox-FLEX}}}
  \newcommand{\refProxFLEX}{\ProxFLEX\ (\Cref{alg:Prox-FLEX})}
\newenvironment{customalgorithm}[1]{
    \floatstyle{plain}
    \restylefloat{algorithm}
    \floatname{algorithm}{#1}        
    \renewcommand{\thealgorithm}{}   
    \begin{algorithm}
}{
    \end{algorithm}
    \addtocounter{algorithm}{-1} 
    %
    \floatstyle{ruled}
    \restylefloat{algorithm}
}
\definecolor{Red}{rgb}{1, 0, 0}
\definecolor{BlueIMT}{RGB}{0,42,72}
\definecolor{ForestGreen}{rgb}{0.13, 0.55, 0.13}
\definecolor{Gray}{rgb}{0.66, 0.66, 0.66}
\definecolor{Green}{rgb}{0.0, 0.5, 0.0}
\definecolor{MidnightBlue}{rgb}{0.098, 0.098, 0.439}
\definecolor{Orange}{rgb}{0.93, 0.53, 0.18}
    \edef\temp{%
      \noexpand\expandafter\noexpand\gdef\noexpand\csname\x\noexpand\endcsname{\noexpand\color{\z}}%
      \noexpand\expandafter\noexpand\gdef\noexpand\csname\y\noexpand\endcsname{\noexpand\bfseries\noexpand\csname\x\noexpand\endcsname}%
    }\temp
\theoremstyle{myStyle1}
\newtheorem{assumption}{Assumption}
\renewcommand\theassumption{\Roman{assumption}}
\newlist{enumeratass}{enumerate}{1} 
\setlist[enumeratass]{label={(\roman*)}, ref=\theassumption.{(\roman*)}}
\crefname{enumeratassi}{Assumption}{Assumptions}
\newlist{enumeratlem}{enumerate}{1} 
\setlist[enumeratlem]{label={(\roman*)}, ref=\thelemma.{(\roman*)}}
\crefname{enumeratlemi}{Lemma}{Lemmas}
\newlist{thmenumerate}{enumerate}{1}
\setlist[thmenumerate]{label={(\roman*)}, ref=\thetheorem.{(\roman*)}}
\newlist{propenumerate}{enumerate}{1}
\setlist[propenumerate]{label={(\roman*)}, ref=\theproposition.{(\roman*)}}
\Crefname{remark}{Remark}{Remarks}
\numberwithin{equation}{section}
  \setlist[itemize,1]{label=\textbullet,wide,labelwidth=!,labelindent=0pt}
  \newcommand\mylistlabel[2][l]{\eqmakebox[listlabel@\EnumitemId][#1]{#2}}
  \setlist[enumerate,1]{
    label=\text{\it(\roman*)},
    ref=\text{(\roman*)},
  }
  \newcommand{\createthmlists}[1]{%
    \newlist{#1enumerate}{enumerate}{1}%
    \setlist[#1enumerate,1]{%
      label = \upshape{(\roman*)},%
      ref = \thetheorem(\roman*),%
    }%
    \crefalias{#1enumeratei}{#1}%
    \AtBeginEnvironment{#1}{%
      \expandafter\let\expandafter\enumerate\csname #1enumerate\endcsname
      \expandafter\let\expandafter\endenumerate\csname end#1enumerate\endcsname
    }{}{}%
  }%
\title{\Large \sffamily\bfseries 
A Lyapunov analysis of Korpelevich's extragradient method with fast and flexible extensions
} 
\author{Manu Upadhyaya\(^{\star}\) \and Puya Latafat\(^{\dagger}\) \and Pontus Giselsson\(^{\star}\)}
\date{%
    \textit{ }\\
    \(^{\star}\)Department of Automatic Control\\%
    Lund University, Lund, Sweden\\%
    \{\href{mailto:manu.upadhyaya@control.lth.se}{manu.upadhyaya}, \href{mailto:pontus.giselsson@control.lth.se}{pontus.giselsson}\}@control.lth.se\\[2ex]%
    \(^{\dagger}\)DYSCO (Dynamical Systems, Control, and Optimization)\\%
    IMT School for Advanced Studies Lucca, Lucca, Italy\\%
    \href{mailto:puya.latafat@imtlucca.it}{puya.latafat@imtlucca.it}
}
\begin{document}
\maketitle
 
\begin{abstract}
    \noindent
\new{We develop a Lyapunov-based analysis of Korpelevich’s extragradient method and show that it achieves an \(o(1/k)\) last-iterate convergence rate of the constructed Lyapunov function. This Lyapunov function simultaneously upper bounds several standard measures of optimality, which allows our analysis to sharpen existing last-iterate convergence guarantees for these measures. Moreover, the same analysis enables the design of a class of flexible extensions of the extragradient method in which extragradient steps are adaptively blended with user-specified directions via a Lyapunov-guided line-search procedure. These extensions retain global convergence under practical assumptions and can attain superlinear rates when the directions are chosen appropriately. Numerical experiments confirm the simplicity and efficiency of the proposed framework.}
\end{abstract}

\keywords{Monotone inclusions, extragradient method, Lyapunov analysis, superlinear convergence}

\MSC{%
47J20, 
47J25, 
47J26, 
47H05, 
65K10, 
65K15, 
65J15 
}

\section{Introduction}\label{sec:introduction}
In this work, we consider the inclusion problem
\begin{align}\label{eq:the_inclusion_problem}
\text{find}\  z\in\calH\ \text{ such that }\ 0 \in F(z) + \partial g(z),
\end{align}
where \(F:\calH\to\calH\) is monotone and \(L_{F}\)-Lipschitz continuous for some \(L_{F}\in\reals_{++}\), \(g:\calH\to \reals \cup \{+\infty\}\) is a proper, convex, and lower semicontinuous function, and \((\calH,\inner{\cdot}{\cdot})\) is a real Hilbert space. Inclusion problems of the form~\eqref{eq:the_inclusion_problem} are known as \emph{hemivariational inequalities}~\cite{monteiro2011complexityvariantstsengs} or \emph{(mixed) variational inequalities}~\cite{malitsky2020goldenratioalgorithms,noor1990mixedvariationalinequalities}, and frequently arise in fundamental mathematical programming problems—either directly or through reformulation—including minimization, saddle-point, complementarity, Nash equilibrium, and fixed-point problems~\cite{bauschke2017convexanalysismonotone,facchinei2004finitedimensionalvariationalI}. The most common methods for solving~\eqref{eq:the_inclusion_problem} belong to the large class of extragradient-type methods~\cite{korpelevich1976extragradientmethodfinding,popov1980modificationarrowhurwicz,tseng2000modifiedforwardbackward}. For a recent review, see~\cite{trandinh2024revisitingextragradienttype,trandinh2025acceleratedextragradienttype}. Among these methods, the first and most widely recognized is Korpelevich's extragradient method~\cite{korpelevich1976extragradientmethodfinding}. Although originally proposed for the constrained case in which \(g\) is the indicator function of a nonempty, closed, and convex set, the method also applies to the more general setting in~\eqref{eq:the_inclusion_problem}. Specifically, given an initial point \(z^0\in \calH\) and a step-size parameter \(\gamma \in (0,1/L_{F})\), its iterations are given by
\begin{subequations}\label{alg:EG}
    \begin{align}\label{alg:EG:1}
        \bar{z}^{k}
        ={}&
        \prox_{\gamma g}\left(z^{k} - \gamma F(z^{k})\right), 
        \\ \label{alg:EG:2}
        z^{k+1} 
        ={}& \prox_{\gamma g}\left(z^{k} - \gamma F(\bar{z}^{k})\right)
    \end{align}
\end{subequations}
for each \(k\in\naturals\). A popular alternative is Tseng's forward-backward-forward method~\cite{tseng2000modifiedforwardbackward}, given by
\begin{subequations}\label{alg:Tseng}
    \begin{align}
        \bar{z}^{k} {} = {} \prox_{\gamma g}\left(z^{k} - \gamma F(z^{k})\right), \\
        z^{k+1} {} = {} \bar{z}^{k} + \gamma \p{ F\p{z^{k} } -F\p{\bar{z}^{k}} }
    \end{align}
\end{subequations}
for each \(k\in\naturals\), that requires one less evaluation of the proximal operator \(\prox_{\gamma g}\) per iteration. Classically, the convergence analyses of these methods rely on Fej\'er-type arguments~\cite{korpelevich1976extragradientmethodfinding, tseng2000modifiedforwardbackward}.

In this work, we propose an analysis centered around the Lyapunov function
\begin{align*}      
    \V\p{z^{k},\bar{z}^{k},z^{k+1}} 
    = 2\gamma^{-1}\inner{z^{k} - z^{k+1}}{F\p{z^{k}} - F\p{\bar{z}^{k}}}
    + \gamma^{-2}\norm{z^{k+1} - \bar{z}^{k}}^{2}
    + \gamma^{-2}\norm{z^{k} - z^{k+1}}^{2}.
\end{align*}
For the extragradient method, \(\V_k\) serves as a nonnegative optimality measure for the inclusion problem~\eqref{eq:the_inclusion_problem} as shown in \Cref{prop:V_performance_measure}. \new{Likewise, \(\V_k\) is a nonnegative optimality measure for Tseng's method, since the Lyapunov function reduces to \(\V_k = \gamma^{-2}\norm{z^{k} - \bar{z}^{k}}^{2}\) in this case.} In the particular case when \(g = 0\), both methods are identical, and the Lyapunov function reduces to \(\V_k=\|F(z^k)\|^2\).

Besides being an optimality measure, we show in \Cref{thm:EG:new_Lyapunov_w.o._sol} that \(\V_k\) satisfies a descent inequality for the extragradient method. Moreover, \new{for the extragradient method}, we show a Fejér-type inequality in which \(\V_k\) appears as the residual term (see \Cref{thm:EG:primal_descent_for_V}). \new{By combining this result with the descent property, we establish a \(o(1/k)\) last-iterate convergence rate for \(\V_k\) for the extragradient method, as shown in \Cref{cor:lastiter:T}.} \new{In \Cref{sec:comparison}, we show that \(\V_k\) upper bounds some common optimality measures used to judge the quality of approximate solutions for \eqref{eq:the_inclusion_problem}; hence, together with \Cref{cor:lastiter:T}, the same last-iterate convergence result automatically holds for each of those measures as well. Taken together, the results we obtain for $\V_k$ enable us to recover and, in some cases, extend recent last-iterate convergence-rate results for the extragradient method (cf. \cite[Theorem 3.3]{gorbunov2022extragradientmethodo1/k}, \cite[Theorem 3]{cai2022tightlastiterateconvergenceextragradient}, and \cite[Corollary 4.1]{trandinh2024revisitingextragradienttype}).}

\new{Interestingly, \Cref{thm:EG:new_Lyapunov_w.o._sol} is particular to Korpelevich's extragradient method.} We demonstrate through a simple counterexample in \Cref{ex:counterexample:tseng} that the descent inequality in terms of \(\V_k\) fails for Tseng's method. Moreover, even for the extragradient method, it is crucial to leverage the specific structure of~\eqref{eq:the_inclusion_problem}. Indeed, the claimed descent inequality fails, and even the convergence of the method does not hold if we replace \(\partial g\) with a maximally monotone operator \(T:\calH\to 2^{\calH}\) and correspondingly the proximal operators \(\prox_{\gamma g}\) in~\eqref{alg:EG} with the resolvent \(\p{\Id + \gamma T}^{-1}\). This broader setting is ruled out by a counterexample presented in \Cref{ex:counterexample:full_eg}. \new{However, if \(T\) is also  3-cyclically monotone (see \cite[Definition 22.13]{bauschke2017convexanalysismonotone}), then \Cref{thm:EG:new_Lyapunov_w.o._sol,thm:EG:primal_descent_for_V} (and therefore also \Cref{cor:lastiter:T}) remain valid; see \Cref{rmk:cyclically_monotone} for details.}

The second objective of this work is to develop flexible extragradient-type schemes that accommodate fast local directions while maintaining global convergence. In this regard, the seminal work \cite{solodov1998globallyconvergentinexact} 
\new{proposes a hybrid method for solving monotone equations, i.e., when \(g=0\) in \eqref{eq:the_inclusion_problem}. Their scheme achieves global convergence by blending an inexact regularized Newton step with the hyperplane projection framework from \cite{solodov1999hybridapproximateextragradient}. At each iteration, a search direction is computed based on an inexact regularized Newton step. A line search is then performed along this direction, not to decrease a merit function, but to identify a hyperplane separating the current iterate from the solution set. The algorithm then proceeds by projecting the iterate onto this hyperplane. While this approach incorporates a line search, the convergence analysis still fundamentally} relies on Fej\'er-type monotonicity arguments.
 In practice, however, the projection step can undermine the effectiveness of the Newtonian directions, resulting in slower convergence.


Another related work to ours is~\cite{themelis2019supermannsuperlinearlyconvergent}, which addresses the problem of finding fixed points of averaged operators. They propose a hybrid scheme accelerating many numerical algorithms under the Krasnosel'ski\u{\i}--Mann framework. Similar to~\cite{solodov1998globallyconvergentinexact}, their scheme incorporates a hyperplane projection step and achieves superlinear convergence under suitable assumptions. In addition, it allows for a general class of local directions, including quasi-Newton-type directions, providing greater flexibility in practice.

In contrast to the approaches mentioned earlier that use a line search to identify a separating hyperplane onto which the iterates are projected (see~\cite{solodov1998globallyconvergentinexact, themelis2019supermannsuperlinearlyconvergent}), our proposed schemes incorporate line search procedures grounded in our new Lyapunov analysis, directly aiming to reduce \(\V_k\).  
We introduce three flexible algorithms tailored to specific instances of problem~\eqref{eq:the_inclusion_problem}.~\refFLEX{} is introduced for finding zeros of \(F\),~\refIFLEX{} is applicable when \(F\) is injective, and~\refProxFLEX{} addresses problem~\eqref{eq:the_inclusion_problem} in its full generality. All three algorithms share the same guiding principle: at each iteration, one performs a convex combination of a standard extragradient step~\eqref{alg:EG} and a step based on a user-specified direction. The specific weighting for this convex combination is determined by the line-search procedure, ensuring sufficient descent of the optimality measure \(\V_k\). Similar to~\cite{themelis2019supermannsuperlinearlyconvergent}, our schemes accommodate a wide range of user-chosen directions, including quasi-Newton-type directions. A key feature enabling this approach is that \(\V_k\) depends solely on values computed at each iteration and does not involve a solution to~\eqref{eq:the_inclusion_problem}. This design ensures high flexibility while guaranteeing global (see \Cref{sec:algs}) and superlinear convergence (see \Cref{sec:superlinear}) when choosing suitable directions. 

Our preliminary numerical experiments indicate that using quasi-Newton directions in our proposed algorithms yields favorable performance. In particular, limited-memory type-I and type-II Anderson acceleration exhibit promising results (see \Cref{sec:numerical}). In related work,~\cite{zhang2020globallyconvergenttype} studies Anderson acceleration for finding fixed points of averaged operators, proposing a globalization strategy based on a stabilization and safeguarding mechanism—rather than a line search—that reverts to a nominal Krasnosel’ski\u{\i}–Mann step whenever the Anderson acceleration step fails to sufficiently reduce the forward residual. More recently,~\cite{qu2024extragradientanderson} introduced an extragradient-based scheme with memory-one Anderson acceleration, which reduces overhead and allows for simple, explicit updates of the directions. Furthermore,~\cite{asl2024jsymmetricquasi} presents a quasi-Newton method tailored to minimax problems. Our theory offers a direct globalization strategy for such directions, applicable in the uniformly monotone and injective settings (see \Cref{thm:IFLEX}), or whenever the resulting directions are summable (see \Cref{thm:FLEX:dk_summable}).

\subsection{Organization}\label{sec:organization}%
In \Cref{sec:Lyapunov}, we formally introduce the new Lyapunov analysis for Korpelevich’s extragradient method. Building on this framework, we present the three new algorithms in \Cref{sec:algs} and establish their global convergence under suitable assumptions. In \Cref{sec:global}, we provide detailed proofs of the results from the preceding section. Next, \Cref{sec:superlinear} focuses on superlinear convergence, including corresponding proofs for the proposed algorithms. Numerical experiments appear in \Cref{sec:numerical}, and we conclude in \Cref{sec:conclusions} with a summary of key findings and directions for future research. \new{Finally, \Cref{app:background} offers background material on Korpelevich’s extragradient method, \Cref{app:counterexamples} presents the counterexamples mentioned earlier, and \Cref{sec:comparison} contains a comparison to recent last-iterate convergence results for the extragradient method.}%

\subsection{Notation and preliminaries}\label{sec:notation}%
Let \(\naturals\) denote the set of nonnegative integers, \(\mathbb{N}\) the set of positive integers, \(\mathbb{Z}\) the set of integers, \(\llbracket n,m \rrbracket = \set{l \in \mathbb{Z} \xmiddle| n \leq l \leq m}\) the set of integers inclusively between the integers \(n\) and \(m\),  \(\reals\) the set of real numbers, \(\reals_+\) the set of nonnegative real numbers, \(\reals_{++}\) the set of positive real numbers, \(\reals^n\) the set of all \(n\)-tuples of elements of \(\reals\), \(\reals^{m\times n}\) the set of real-valued matrices of size \(m\times n\), if \(M\in\reals^{m\times n}\) then \([M]_{i,j}\) the \(i,j\)-th element of \(M\), \(\sym^{n}\) the set of symmetric real-valued matrices of size \(n\times n\), and \(\sym_+^n\subseteq \sym^{n}\) the set of positive semidefinite real-valued matrices of size \(n\times n\). Suppose that \( 1 \leq p < +\infty \), \(K\subseteq\naturals\), and \(\mathcal{U} \subseteq \mathcal{W} \), where \(\mathcal{W}\) is a normed space. Then we define the space \( \ell^{p}\p{K;\,\mathcal{U}} = \{(u^{k})_{k\in K} \in \mathcal{U}^{K} \mid \sum_{k\in K} \norm{u^{k}}^{p} < +\infty \} \).

Throughout this paper, \((\calH,\inner{\cdot}{\cdot})\) will denote a real Hilbert space and \(\norm{\cdot}\) the canonical norm, which will be clear from the context. Let \(F:\calH\to\calH\) be an operator. Suppose that \(L_{F}\geq0\). The operator \(F\) is said to be \emph{\(L_{F}\)-Lipschitz continuous} if \(\norm{F\p{x} - F\p{y}} \leq L_{F} \norm{x-y}\) for each \(x,y\in\calH\). The operator \(F\) is said to be \emph{monotone} if \(0\leq \inner{F\p{x} - F\p{y}}{x-y}\) for each \(x,y\in\calH\). Suppose that \(\mu_{F}\geq0\). The operator \(F\) is said to be \emph{\(\mu_{F}\)-strongly monotone} if \(\mu_{F}\norm{x-y}^{2}\leq \inner{F\p{x} - F\p{y}}{x-y}\) for each \(x,y\in\calH\). Moreover, \(F\) is said to be \emph{uniformly monotone with modulus} \(\phi:\reals_{+}\to \reals_{+}\) if \(\phi\) is increasing, vanishes only at \(0\), and \(\phi\p{\norm{x-y}}\leq \inner{F\p{x} - F\p{y}}{x-y}\) for each \(x,y\in\calH\). For a general set-valued operator \(T:\calH \to 2^{\calH}\), the set of \emph{zeros} is denoted by \(\zer\p{T}=\set{x\in\calH \xmiddle| 0 \in T\p{x} }\). The Cauchy–Schwarz inequality states that \(\abs{\inner{x}{y}}\leq\norm{x}\norm{y}\) for each \(x,y\in\calH\) and Young's inequality that \(2\inner{x}{y} \leq \alpha\norm{x}^{2} + \alpha^{-1}\norm{y}^{2}\) for each \(x,y\in\calH\) and \(\alpha>0\).

Given a function \(g:\calH\rightarrow\reals\cup\{+\infty\}\), the \emph{effective domain} of \(g\) is the set \(\dom g = \{x\in\calH \mid g\p{x}<+\infty\}\). The function \(g\) is said to be \emph{proper} if \(\dom g \neq \emptyset\). The \emph{subdifferential} of a proper function \(g\) is the set-valued operator \(\partial g:\calH \rightarrow 2^{\calH}\) defined as the mapping \(x \mapsto \{u \in \calH \mid \forall y \in \calH,\, g\p{y} \geq g\p{x} + \inner{u}{y-x} \}\). The function \(g\) is said to be \emph{convex} if \(g\p{\p{1 - \lambda} x + \lambda y} \leq \p{1 - \lambda} g\p{x} + \lambda g\p{y}\) for each \(x, y \in \calH\) and \(0 \leq \lambda \leq 1\). The function \(g\) is said to be \emph{lower semicontinuous} if \(\liminf_{y\to x} g\p{y} \geq g\p{x}\) for each \(x\in\calH\). If \(C\subseteq\calH\), the \emph{indicator function of \(C\)}, denoted \(\delta_{C}:\calH \to \reals\cup\set{+\infty}\), is defined as \(\delta_{C}(x) = 0\) if \(x\in C\) and \(\delta_{C}(x) = +\infty\) if \(x\in \calH\setminus C\).

Let \(g:\calH\to \reals \cup \{+\infty\}\) be proper, convex and lower semicontinuous, and let \(\gamma>0\). Then the \emph{proximal operator} \(\prox_{\gamma g} : \calH \to \calH \) is defined as the single-valued operator given by
\begin{align*}
    \prox_{\gamma g}\p{x} = \argmin_{z\in\calH}\left(g(z) + \tfrac{1}{2\gamma}\norm{x-z}^2\right) 
\end{align*}
for each \(x\in\calH\)~\cite[Proposition 12.15]{bauschke2017convexanalysismonotone}. If \(x,p\in\calH\), then \(p = \prox_{\gamma g}\p{x}\) \(\Leftrightarrow\) \(\gamma^{-1}\p{x-p} \in \partial g\p{p}\) \(\Leftrightarrow\) \(0 \leq g\p{y} - g\p{p} - \inner{\gamma^{-1}\p{x-p}}{y-p}\) for each \(y\in\calH\)~\cite[Proposition 16.44, Proposition 16.6]{bauschke2017convexanalysismonotone}. 

%

\section{A new Lyapunov analysis}\label{sec:Lyapunov}
Classical convergence analyses of Korpelevich’s extragradient method~\eqref{alg:EG} typically rely on Fejér-type arguments, as discussed in \Cref{app:background}. In this section, we introduce a complementary Lyapunov inequality that not only leads to a last-iterate result but also forms the basis of the new algorithms presented in \Cref{sec:algs}. Throughout this work, we investigate~\eqref{eq:the_inclusion_problem} under the following assumption.

\begin{assumption}\label{ass:main} 
    The following hold in problem~\eqref{eq:the_inclusion_problem}.
    \begin{enumeratass} 
        \item \label{ass:F} \(F:\calH\to\calH\) is monotone and \(L_{F}\)-Lipschitz continuous for some \(L_{F}\in\reals_{++}\).
        \item \label{ass:g}\(g:\calH\to \reals \cup \{+\infty\}\) is proper, convex and lower semicontinuous. 
    \end{enumeratass}
\end{assumption}

Our analysis is centered around the Lyapunov function \(\mathcal{V}:\calH^{3}\to\reals\) given by
\begin{equation}\label{eq:V}
    \mathcal{V}(z, \bar z, z^{+}) = 2\gamma^{-1}\inner{z - z^{+}}{F\p{z} - F\p{\bar{z}}} + \gamma^{-2}\norm{z^{+} - \bar{z}}^{2} + \gamma^{-2}\norm{z - z^{+}}^{2} 
\end{equation}
for each \(\p{z, \bar z, z^{+}} \in \calH^{3}\). \Cref{prop:V_performance_measure} establishes that \(\mathcal{V}\) is generally a valid optimality measure for the inclusion problem in~\eqref{eq:the_inclusion_problem}. For notational convenience, we define the algorithmic operators \(T_{1}^{\gamma}, T_{2}^{\gamma} : \calH \to \calH\) by 
\begin{equation}\label{eq:T1T2}
    T_{1}^{\gamma} = \prox_{\gamma g} \circ (\Id - \gamma F) 
    \quad \text{and} \quad
    T_{2}^{\gamma} = \prox_{\gamma g} \circ (\Id - \gamma F \circ T_{1}^{\gamma}),
\end{equation}
where \(\gamma \in \reals_{++}\) is the step-size parameter. With this notation, the iterates of~\eqref{alg:EG} can be written compactly as \(\bar{z}^k = T_{1}^{\gamma}(z^k)\) and \(z^{k+1} = T_2^{\gamma}(z^k)\).

\begin{proposition}\label{prop:V_performance_measure}
    Suppose that \Cref{ass:main} holds. Let  \(\gamma \in (0, 1/L_F)\), \(z\in \calH\), \(\bar z = T_{1}^{\gamma}(z)\) and \(z^+ = T_{2}^{\gamma}(z)\) where \(T_{1}^{\gamma}\) and \(T_{2}^{\gamma}\) are the algorithmic operators defined in~\eqref{eq:T1T2}, and \(\mathcal{V}\) the Lyapunov function defined in~\eqref{eq:V}. Then the following hold.
    \begin{propenumerate}
        \item \(\mathcal{V}\p{z, \bar z, z^+}  \geq  \p{1- \gamma \LF}\gamma^{-2} \p{\norm{{z}^{+}-\bar{z}}^{2} + \norm{{z}^{+}-{z}}^{2} } \geq 0\).
        \label{prop:V_performance_measure:lower_bound}
        \item \(\mathcal{V}\p{z, \bar z, z^+} = 0\) if and only if \(z = \bar z = z^+ \in \zer\p{F + \partial g }\).\label{prop:V_performance_measure:zero_iff}
    \end{propenumerate}
    \begin{proof}\textit{ }
    \begin{itemize}
            \item[\ref{prop:V_performance_measure:lower_bound}:] 
            The inner product in the definition of \(\V\) can be written as 
            {\begin{allowdisplaybreaks}
            \begin{align*}
                \inner{ {z}-{z}^{+}}{F\p{z}-F\p{\bar{z}}}
                ={}&
                 \inner{ {z}-{z}^{+}}{F\p{z}-F\p{z^{+}}}
                 + 
                 \inner{ {z}-{z}^{+}}{F\p{z^{+}}-F\p{\bar{z}}}
                 \\
                 \geq{}& 
                 \inner{ {z}-{z}^{+}}{F\p{z^{+}}-F\p{\bar{z}}}
                 \\
                 \geq{}&
                 -\norm{{z}-{z}^{+}}\norm{F\p{z^{+}}-F\p{\bar{z}}}
                 \\
                 \geq{}& 
                 - \LF\norm{{z}-{z}^{+}}\norm{{z}^{+}-\bar{z}} \\
                 \geq{}&
                 -\tfrac{\LF}{2} \p{\norm{{z}-{z}^{+}}^{2} + \norm{{z}^{+}-\bar{z}}^{2} },
            \end{align*}
            \end{allowdisplaybreaks}}%
            where monotonicity of \(F\) is used in the first inequality, the Cauchy--Schwarz inequality is used in the second inequality, Lipschitz continuity of \(F\) is used in the third inequality, and Young's inequality for products is used in the fourth inequality. The lower bound of \(\mathcal{V}\) follows from using this inequality in~\eqref{eq:V} and the assumption \(\gamma\LF \in (0,1)\).
            \item[\ref{prop:V_performance_measure:zero_iff}:] Suppose that \(\mathcal{V}\p{z, \bar z, z^+} = 0\). Then \Cref{prop:V_performance_measure:lower_bound} and \Cref{prop:EG:fixed_point:fixed_points_are_solutions} imply that \(z = \bar z = z^+ \in \zer\p{F + \partial g }\). Conversely, suppose that \(z = \bar z = z^+ \in \zer\p{F + \partial g }\). Then it is clear from~\eqref{eq:V} that \(\mathcal{V}\p{z, \bar z, z^+} = 0\). \qedhere
        \end{itemize}
    \end{proof}
\end{proposition}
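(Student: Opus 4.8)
The plan is to establish (i) by a single lower-bound estimate on the cross term appearing in $\V$, and then to read off (ii) essentially for free from that estimate together with the characterization of fixed points of the extragradient operators.

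For part (i), I would begin from the definition \eqref{eq:V} and rewrite the inner product by inserting $\pm F(z^+)$:
\[
\inner{z - z^+}{F(z) - F(\bar z)} = \inner{z - z^+}{F(z) - F(z^+)} + \inner{z - z^+}{F(z^+) - F(\bar z)}.
\]
The first term is nonnegative by monotonicity of $F$ applied to the pair $(z, z^+)$. For the second term I would chain three standard estimates: Cauchy--Schwarz gives the bound $\geq -\norm{z - z^+}\,\norm{F(z^+) - F(\bar z)}$, then $L_F$-Lipschitz continuity gives $\geq -\LF\norm{z - z^+}\,\norm{z^+ - \bar z}$, and finally Young's inequality with parameter $1$ gives $\geq -\tfrac{\LF}{2}\bigl(\norm{z - z^+}^2 + \norm{z^+ - \bar z}^2\bigr)$. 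Substituting this lower bound for the cross term into \eqref{eq:V} makes the three remaining contributions combine into $(\gamma^{-2} - \gamma^{-1}\LF)\bigl(\norm{z^+ - \bar z}^2 + \norm{z^+ - z}^2\bigr) = (1 - \gamma\LF)\gamma^{-2}\bigl(\norm{z^+ - \bar z}^2 + \norm{z^+ - z}^2\bigr)$, and the final nonnegativity is immediate from $\gamma\LF \in (0,1)$.

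For part (ii), in the forward direction I would invoke (i): if $\V(z, \bar z, z^+) = 0$, then $(1 - \gamma\LF)\gamma^{-2}\bigl(\norm{z^+ - \bar z}^2 + \norm{z^+ - z}^2\bigr) \leq 0$, and since $1 - \gamma\LF > 0$ this forces $z^+ = \bar z$ and $z^+ = z$; hence $z$ is a common fixed point of $T_1^\gamma$ and $T_2^\gamma$, and the earlier fact that fixed points of the extragradient operators are exactly the elements of $\zer(F + \partial g)$ yields $z \in \zer(F + \partial g)$. In the converse direction, $z = \bar z = z^+$ makes the cross term in \eqref{eq:V} vanish because $z - z^+ = 0$, and it makes the two squared-norm terms vanish as well, so $\V(z, \bar z, z^+) = 0$ directly from the definition (the solution hypothesis is not actually needed for this implication).

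I expect no serious obstacle here; the only real choice is the bookkeeping in the cross-term estimate — splitting off $F(z^+)$ rather than $F(\bar z)$ so that monotonicity matches the pair $(z, z^+)$ that already appears via $\norm{z^+ - z}^2$ in $\V$, and calibrating Young's inequality with parameter exactly $1$ so that the resulting $\LF$-weighted sum of $\norm{z^+ - \bar z}^2$ and $\norm{z^+ - z}^2$ lines up with the $\gamma^{-2}$-terms. The rest is direct substitution, and part (ii) is a short deduction once (i) is in hand.
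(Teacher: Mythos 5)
Your proposal is correct and follows essentially the same route as the paper's proof: the same $\pm F(z^+)$ splitting of the cross term with monotonicity, Cauchy--Schwarz, Lipschitz continuity, and Young's inequality for part (i), and the same appeal to the fixed-point characterization of $\zer(F+\partial g)$ for part (ii). No gaps.
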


The following result shows that \(\mathcal{V}\) is, in fact, a suitable Lyapunov function for the extragradient method~\eqref{alg:EG}, i.e., it fulfills a descent inequality. Moreover, the descent inequality neither contains a solution of~\eqref{eq:the_inclusion_problem} nor assumes the existence of a solution. 

\begin{theorem}\label{thm:EG:new_Lyapunov_w.o._sol}
    Suppose that \Cref{ass:main} holds and the sequence \(\p{\p{z^{k},\bar{z}^{k}}}_{k\in\naturals}\) is generated by~\eqref{alg:EG} with initial point \(z^0\in \calH\) and step-size parameter \(\gamma \in \reals_{++}\). Then
    \begin{align}\label{eq:EG:new_Lyapunov_w.o._sol}
        \mathcal{V}_{k+1} \leq \mathcal{V}_{k} - \p{1-\gamma^{2}L_{F}^{2}} \gamma^{-2}\norm{z^{k+1}-\bar{z}^{k}}^{2}
    \end{align}
    for each \(k\in \naturals\), where \(\V_k =\mathcal{V}(z^k, \bar z^k, z^{k+1})\) for the Lyapunov function \(\mathcal V\) defined in~\eqref{eq:V}.

    \begin{proof}
        Note that the first and second proximal steps in~\eqref{alg:EG} can equivalently be written via their subgradient characterization as
        \begin{equation}\label{eq:subgrad:char}
            \gamma^{-1}\p{z^{k}-\bar{z}^{k}}-F(z^{k}) \in \partial g(\bar z^k) \quad \text{ and } \quad 
         \gamma^{-1}\p{z^{k}-z^{k+1}}-F(\bar{z}^{k}) \in \partial g(z^{k+1}),
        \end{equation}
        respectively. Using the subgradient inequality at the points \(z^{k+1}\), \(z^{k+2}\) and \(\bar{z}^{k+1}\), with the particular subgradients given in~\eqref{eq:subgrad:char}, it follows that
        \begin{subequations}
            \begin{align}\label{eq:EG:prox_implicit:k1}
            0 &\geq g\p{z^{k+1}} - g\p{z} + \inner{\gamma^{-1}\p{z^{k} - z^{k+1}} - F\p{\bar{z}^{k}}}{z-z^{k+1}},
            \\ \label{eq:EG:prox_implicit:k2}
            0 &\geq g\p{z^{k+2}} - g\p{z} + \inner{\gamma^{-1}\p{z^{k+1} - z^{k+2}} - F\p{\bar{z}^{k+1}}}{z-z^{k+2}},
            \\ \label{eq:EG:prox_implicit:kbar1}
            0 &\geq g\p{\bar{z}^{k+1}} - g\p{z} + \inner{\gamma^{-1}\p{z^{k+1} - \bar{z}^{k+1}} - F\p{z^{k+1}}}{z-\bar{z}^{k+1}}.
        \end{align}
        \end{subequations}
        holds for any \(z \in \calH\), respectively. Picking \(z = \bar{z}^{k+1}\) in~\eqref{eq:EG:prox_implicit:k1}, \(z = z^{k+1}\) in~\eqref{eq:EG:prox_implicit:k2}, \(z = z^{k+2}\) in~\eqref{eq:EG:prox_implicit:kbar1}, summing the resulting inequalities, and multiplying by \(2\gamma^{-1}\) gives 
        {\begin{allowdisplaybreaks}
        \begin{align} \notag
            0 
            \geq{}& 
            2\gamma^{-2}\inner{ z^{k} - z^{k+1} }{\bar{z}^{k+1}-z^{k+1}}   - 2 \gamma^{-1} \inner{F\p{\bar{z}^{k}} }{\bar{z}^{k+1}-z^{k+1}}  
            \nonumber
            \\
            &{}+ 
            2\gamma^{-2}\norm{ z^{k+1} - z^{k+2} }^{2}  - 2\gamma^{-1}\inner{ F\p{\bar{z}^{k+1}} }{z^{k+1}-z^{k+2}} 
            \nonumber
            \\
            &{}+ 
            2\gamma^{-2}\inner{ z^{k+1} - \bar{z}^{k+1}}{z^{k+2}-\bar{z}^{k+1}} - 2\gamma^{-1} \inner{ F\p{z^{k+1}} }{z^{k+2}-\bar{z}^{k+1}}.
                \label{eq:ApB}
        \end{align}
        \end{allowdisplaybreaks}}%
        The first two inner products in~\eqref{eq:ApB} can be simplified as  
        {\begin{allowdisplaybreaks}
        \begin{align*}
            A_k & =  2\gamma^{-2}\inner{ z^{k} - z^{k+1} }{\bar{z}^{k+1}-z^{k+1}}   - 2 \gamma^{-1} \inner{F\p{\bar{z}^{k}} }{\bar{z}^{k+1}-z^{k+1}} \\
            &= \gamma^{-2}\norm{z^{k} - z^{k+1}}^2 + \gamma^{-2}\norm{ \bar{z}^{k+1}-z^{k+1} }^2 - \gamma^{-2}\norm{z^{k} - \bar{z}^{k+1} }^2  - 2 \gamma^{-1} \inner{F\p{\bar{z}^{k}} }{\bar{z}^{k+1}-z^{k+1}},
        \end{align*}
        \end{allowdisplaybreaks}}%
        where the identity \(2\inner{x}{y} = \norm{x}^2 + \norm{y}^2 - \norm{x-y}^2\) for each \(x,y\in \calH\) is used in the second equality, while the remaining four terms in~\eqref{eq:ApB} can be simplified as 
        {\begin{allowdisplaybreaks}
        \begin{align*}
            B_k &  =  
            \begin{aligned}[t]
                & 2\gamma^{-2}\norm{ z^{k+1} - z^{k+2} }^{2}  - 2\gamma^{-1}\inner{ F\p{\bar{z}^{k+1}} }{z^{k+1}-z^{k+2}} \\
                & + 2\gamma^{-2}\inner{ z^{k+1} - \bar{z}^{k+1}}{z^{k+2}-\bar{z}^{k+1}} -  2\gamma^{-1} \inner{ F\p{z^{k+1}} }{z^{k+2}-\bar{z}^{k+1}} 
            \end{aligned}\\
            & = 
            \begin{aligned}[t]
                & 2\gamma^{-1}\inner{z^{k+1} - z^{k+2}}{F\p{z^{k+1}} - F\p{\bar{z}^{k+1}}} + \gamma^{-2}\norm{z^{k+2} - \bar{z}^{k+1}}^{2} + \gamma^{-2}\norm{z^{k+1} - z^{k+2}}^{2}  \\
                & + \gamma^{-2}\norm{z^{k+1} - \bar{z}^{k+1}}^2 - 2\gamma^{-1} \inner{ F\p{z^{k+1}} }{z^{k+1}-\bar{z}^{k+1}} 
            \end{aligned}\\
            &= \mathcal{V}_{k+1} + \gamma^{-2}\norm{z^{k+1} - \bar{z}^{k+1}}^2 - 2\gamma^{-1} \inner{ F\p{z^{k+1}} }{z^{k+1}-\bar{z}^{k+1}},
        \end{align*}
        \end{allowdisplaybreaks}}%
        where the identity \(2\inner{x}{y} = \norm{x}^2 + \norm{y}^2 - \norm{x-y}^2\) for each \(x,y\in \calH\) is used in the second equality, and the explicit expression for \(\V_{k+1} =\mathcal{V}(z^{k+1}, \bar z^{k+1}, z^{k+2})\) is used in the third equality. Therefore, the inequality \(A_k + B_k \leq 0\) in~\eqref{eq:ApB} can be rearranged as  
        \begin{equation}\label{eq:V+:ub}
            \mathcal{V}_{k+1}  
            \leq{}
            \begin{aligned}[t]
                &\gamma^{-2}\norm{z^{k} - \bar{z}^{k+1}}^2  - \gamma^{-2}\norm{z^{k} - z^{k+1}}^2 - 2\gamma^{-2}\norm{ \bar{z}^{k+1}-z^{k+1} }^2 \\
                & + 2\gamma^{-1} \smash{\underbracket{\inner{F\p{\bar{z}^{k}} - F\p{z^{k+1}} }{\bar z^{k+1}-{z}^{k+1}}%
                }_{=C_k}}.
            \end{aligned}
        \end{equation}
        We can upper bound the term \(C_k\) as
            \begin{align}
            C_k
            ={}&
            \inner{F\p{{z}^{k}} - F\p{z^{k+1}} }{z^{k+1} - z^k} 
            + 
            \inner{F\p{\bar{z}^{k}} - F\p{z^{k}} }{z^{k+1} - z^k} 
            \nonumber \\
            &{}
            + \inner{F\p{\bar{z}^{k}} - F\p{z^{k+1}} }{\bar{z}^{k+1} + z^k - 2z^{k+1}}
            \nonumber
            \\
            \leq{}&
            \inner{F\p{\bar{z}^{k}} - F\p{z^{k}} }{z^{k+1} - z^k}
            \nonumber \\
            &{}
            + \tfrac{\gamma}{2}\norm{F\p{\bar{z}^{k}} - F\p{z^{k+1}}}^{2} + \tfrac1{2\gamma}\norm{\bar{z}^{k+1}-2z^{k+1} + z^{k}}^{2}
            \nonumber 
            \\
            \leq{}&
            \inner{F\p{\bar{z}^{k}} - F\p{z^{k}} }{z^{k+1} - z^k}
            {}
            + \tfrac{\gamma L_F^2}{2}\norm{{\bar{z}^{k}} - {z^{k+1}}}^{2} 
            \nonumber \\
            &{}
            + \tfrac1{2\gamma}
            (%
            2\norm{z^{k} - z^{k+1}}^2 + 2\norm{ \bar{z}^{k+1}-z^{k+1} }^2
            -\norm{z^{k} - \bar{z}^{k+1}}^2 
            )
            \label{eq:innerprod:vk+:ub}
        \end{align}
        where monotonicity of \(F\) and Young's inequality is used in the first inequality, and Lipschitz continuity of \(F\) along with the identity \(\norm{x+y}^{2} + \norm{x-y}^{2} = 2\norm{x}^{2} + 2\norm{y}^{2}\) for each \(x,y\in\calH\) is used in the last inequality. 

        Combining~\eqref{eq:V+:ub} and~\eqref{eq:innerprod:vk+:ub}, and using \(\mathcal{V}_{k}= 2\gamma^{-1}\inner{z^{k} - z^{k+1}}{F\p{z^{k}} - F\p{\bar{z}^{k}}} + \gamma^{-2}\norm{z^{k+1} - \bar{z}^{k}}^{2}  + \gamma^{-2}\norm{z^{k} - z^{k+1}}^{2}\) gives 
        \begin{align*}
            \V_{k+1} - \V_k 
            \leq{}&
            2\gamma^{-1}\inner{z^{k+1} - z^k}{F\p{\bar{z}^{k}} - F\p{z^{k}} }
            {}
            + { L_F^2}\norm{{\bar{z}^{k}} - {z^{k+1}}}^{2} 
            \\
            &{}
            +\gamma^{-2}\norm{z^{k} - z^{k+1}}^2 
            - \V_k
            \\
            = {}&
            - (1- \gamma^2 L_F^2)\gamma^{-2}\norm{{\bar{z}^{k}} - {z^{k+1}}}^{2},
        \end{align*}
        as claimed. 
    \end{proof}
\end{theorem}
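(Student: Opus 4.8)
The plan is to obtain the descent inequality directly from the two proximal steps in~\eqref{alg:EG}, written in subgradient form, exploiting that $\partial g$ is cyclically monotone (being the subdifferential of a convex function). Concretely, I would first record the subgradient characterizations $\gamma^{-1}\p{z^{k}-\bar z^{k}}-F(z^{k})\in\partial g(\bar z^{k})$ and $\gamma^{-1}\p{z^{k}-z^{k+1}}-F(\bar z^{k})\in\partial g(z^{k+1})$, together with the analogous pair at index $k+1$. Then I would write the defining subgradient inequality three times — at $z^{k+1}$ (outer prox at step $k$), at $z^{k+2}$ (outer prox at step $k+1$), and at $\bar z^{k+1}$ (inner prox at step $k+1$) — and evaluate them at the test points $z=\bar z^{k+1}$, $z=z^{k+1}$, and $z=z^{k+2}$, respectively. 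With this choice the values $g(z^{k+1})$, $g(z^{k+2})$, $g(\bar z^{k+1})$ appear in a cyclic pattern and cancel upon summation, leaving a purely ``smooth'' inequality once the sum is multiplied by $2\gamma^{-1}$. This first step is the genuine obstacle: one must discover precisely which three subgradient inequalities and which test points make both the $g$-terms telescope away and the leftover quadratics align with $\V_{k+1}$, $\V_k$, and the target residual $\norm{z^{k+1}-\bar z^{k}}^{2}$.

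\textbf{Assembling the Lyapunov terms.} I would split the resulting expression into the two inner products built from the increment $z^{k}\to z^{k+1}$ and the four remaining terms. Applying the polarization identity $2\inner{x}{y}=\norm{x}^{2}+\norm{y}^{2}-\norm{x-y}^{2}$ to each group, one recognizes — by matching with the definition~\eqref{eq:V} of $\V$ at index $k+1$ — that the group of four terms equals $\V_{k+1}$ plus a correction of the form $\gamma^{-2}\norm{z^{k+1}-\bar z^{k+1}}^{2}-2\gamma^{-1}\inner{F(z^{k+1})}{z^{k+1}-\bar z^{k+1}}$. Rearranging the summed inequality then yields an upper bound on $\V_{k+1}$ consisting of three squared-norm terms in $z^{k}$, $z^{k+1}$, $\bar z^{k+1}$ plus a single cross term $C_k=\inner{F(\bar z^{k})-F(z^{k+1})}{\bar z^{k+1}-z^{k+1}}$, which is the only quantity still linking consecutive iterations in a way that must be controlled.

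\textbf{Bounding the cross term and closing.} The final step is to estimate $C_k$. I would expand $\bar z^{k+1}-z^{k+1}=(z^{k+1}-z^{k})+(\bar z^{k+1}-2z^{k+1}+z^{k})$ and split $C_k$ into three inner products: $\inner{F(z^{k})-F(z^{k+1})}{z^{k+1}-z^{k}}\le 0$ by monotonicity of $F$; $\inner{F(\bar z^{k})-F(z^{k})}{z^{k+1}-z^{k}}$, which will later combine with the matching term hidden in $\V_k$; and a residual inner product handled by Young's inequality, then by $L_F$-Lipschitzness of $F$, using $\norm{x+y}^{2}+\norm{x-y}^{2}=2\norm{x}^{2}+2\norm{y}^{2}$ to re-express $\norm{\bar z^{k+1}-2z^{k+1}+z^{k}}^{2}$. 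Substituting this bound back and subtracting the explicit expression $\V_k=2\gamma^{-1}\inner{z^{k}-z^{k+1}}{F(z^{k})-F(\bar z^{k})}+\gamma^{-2}\norm{z^{k+1}-\bar z^{k}}^{2}+\gamma^{-2}\norm{z^{k}-z^{k+1}}^{2}$, all cross terms cancel and one is left with exactly $\V_{k+1}-\V_k\le-(1-\gamma^{2}L_F^{2})\gamma^{-2}\norm{\bar z^{k}-z^{k+1}}^{2}$. The constant is forced: the single application of Young's inequality produces the $\tfrac{\gamma L_F^{2}}{2}$ factor that is precisely what converts $\gamma^{-2}$ into $(1-\gamma^{2}L_F^{2})\gamma^{-2}$, so no slack is wasted; the remainder is disciplined but routine algebra.
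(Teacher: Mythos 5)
Your proposal follows the paper's proof essentially verbatim: the same three subgradient inequalities with the same test points, the same $A_k$/$B_k$ split via polarization, and the same treatment of the cross term $C_k$ by the decomposition $\bar z^{k+1}-z^{k+1}=(z^{k+1}-z^{k})+(\bar z^{k+1}-2z^{k+1}+z^{k})$ combined with monotonicity, Young's inequality, Lipschitz continuity, and the parallelogram law. The argument is correct and no gaps remain.
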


Next, we present \Cref{cor:EG:new_Lyapunov_w.o._sol:g_zero}, which follows immediately from \Cref{thm:EG:new_Lyapunov_w.o._sol} by letting \(g=0\). \new{Observe that \Cref{cor:EG:new_Lyapunov_w.o._sol:g_zero} recovers known results, e.g., see~\cite[Lemma 3.2 and Theorem 3.3]{gorbunov2022extragradientmethodo1/k},~\cite[Theorem 1]{cai2022tightlastiterateconvergenceextragradient}, and~\cite[Remark 2.1]{bot2024extragradientmethod}.}

\begin{corollary}\label{cor:EG:new_Lyapunov_w.o._sol:g_zero}
    Suppose that \Cref{ass:F} holds and the sequence \(\p{\p{z^{k},\bar{z}^{k}}}_{k\in\naturals}\) is generated by 
    \begin{subequations}
            \begin{align*}
                \bar{z}^{k} &=z^{k} - \gamma F\p{z^{k}}, \\
                z^{k+1}     &=z^{k} - \gamma F\p{\bar{z}^{k}}
            \end{align*}
    \end{subequations}
    for each \(k\in\naturals\), with initial point \(z^0\in \calH\) and step-size parameter \(\gamma \in \reals_{++}\). Then, 
    \begin{align}\label{eq:EG:new_Lyapunov_w.o._sol:g_zero}
        \norm{F\p{z^{k+1}}}^{2} \leq \norm{F\p{z^{k}}}^{2} - \p{1-\gamma^{2}L_{F}^{2}} \norm{F\p{z^{k}}-F\p{\bar{z}^{k}}}^{2}
    \end{align}
    for each \(k\in \naturals\). \new{Moreover, if \(\gamma\in\p{0,1/L_{F}}\) and \(\zer\p{F} \neq \emptyset\), then
    \begin{align}\label{eq:EG:g_zero:little-o}
        \norm{F\p{z^{k}}}^{2} \in o\p{1/k} \text{ as }  k \to \infty,
    \end{align}
    and for any \(k\in\naturals\) and \(z^{\star} \in \zer\p{F}\) it holds that
    \begin{align}\label{eq:EG:g_zero:last_iterate_rate}
        \norm{F\p{z^{k}}}^{2} \leq \frac{\norm{z^{0}-z^{\star}}^{2} }{\gamma^{2}\p{1-\gamma^{2} L_{F}^{2}}\p{k+1}}.
    \end{align}}
    
    \begin{proof}
        Letting \(g=0\) in \Cref{thm:EG:new_Lyapunov_w.o._sol} gives~\eqref{eq:EG:new_Lyapunov_w.o._sol:g_zero}. Using \(g=0\),~\eqref{eq:EG:Lyapunov_with_solution} in \Cref{prop:EG:primal_descent} gives
        \begin{equation}\label{eq:EG:Lyapunov_with_solution:g_zero_and_relaxed}
            \norm{z^{i+1}-z^{\star}}^{2} \leq \norm{z^{i}-z^{\star}}^{2} 
            - \gamma^{2}\p{1-\gamma^{2} L_{F}^{2}}\norm{F\p{z^{i}}}^{2}.        
        \end{equation}
        for each \(i\in\naturals\). Inductively summing~\eqref{eq:EG:Lyapunov_with_solution:g_zero_and_relaxed} from \(i=0\) to \(i=k\), rearranging, and dividing by \(\gamma^{2}\p{1-\gamma^{2} L_{F}^{2}}\) gives that
        {\begin{allowdisplaybreaks}
        \begin{align*}
            \sum_{i=0}^{k} \norm{F\p{z^{i}}}^{2} 
            &\leq \frac{\sum_{i=0}^{k}\p{\norm{z^{i}-z^{\star}}^{2} - \norm{z^{i+1}-z^{\star}}^{2}}}{\gamma^{2}\p{1-\gamma^{2} L_{F}^{2}}} \\
            &\leq \frac{\norm{z^{0}-z^{\star}}^{2} }{\gamma^{2}\p{1-\gamma^{2} L_{F}^{2}}} 
        \end{align*}
        \end{allowdisplaybreaks}}%
        for each \(k\in\naturals\). Now \eqref{eq:EG:g_zero:little-o} and \eqref{eq:EG:g_zero:last_iterate_rate} follow from the monotonicity of \(\p{\norm{F\p{z^{i}}}}_{i\in\naturals}\), i.e., \eqref{eq:EG:new_Lyapunov_w.o._sol:g_zero}. 
    \end{proof}
\end{corollary}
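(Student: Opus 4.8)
The plan is to pair the new Lyapunov descent of \Cref{thm:EG:new_Lyapunov_w.o._sol} with the classical ``with-solution'' Fej\'er estimate for the extragradient method, and then invoke an elementary lemma on summable monotone sequences for the $o(1/k)$ refinement.

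First I would specialize \eqref{eq:EG:new_Lyapunov_w.o._sol} to $g = 0$. Since $\prox_{\gamma g} = \Id$, the recursion reads $\bar{z}^{k} = z^{k} - \gamma F(z^{k})$ and $z^{k+1} = z^{k} - \gamma F(\bar{z}^{k})$, so $z^{k} - z^{k+1} = \gamma F(\bar{z}^{k})$ and $z^{k+1} - \bar{z}^{k} = \gamma\p{F(z^{k}) - F(\bar{z}^{k})}$. Substituting these into the definition \eqref{eq:V} and expanding the inner product and squared norms, the $F(\bar z^{k})$-dependent terms cancel, so $\mathcal{V}_{k}$ reduces to $\norm{F(z^{k})}^{2}$, while $\gamma^{-2}\norm{z^{k+1} - \bar{z}^{k}}^{2} = \norm{F(z^{k}) - F(\bar{z}^{k})}^{2}$. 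Plugging these identities into \eqref{eq:EG:new_Lyapunov_w.o._sol} yields \eqref{eq:EG:new_Lyapunov_w.o._sol:g_zero} verbatim; when in addition $\gamma \in (0, 1/L_{F})$ the coefficient $1 - \gamma^{2}L_{F}^{2}$ is strictly positive, so $\p{\norm{F(z^{k})}^{2}}_{k\in\naturals}$ is nonincreasing.

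Next, for $z^{\star} \in \zer(F)$ I would use the classical extragradient estimate $\norm{z^{k+1} - z^{\star}}^{2} \leq \norm{z^{k} - z^{\star}}^{2} - \gamma^{2}\p{1 - \gamma^{2}L_{F}^{2}}\norm{F(z^{k})}^{2}$, i.e. the $g = 0$ case of \Cref{prop:EG:primal_descent}, which follows from the prox/subgradient characterization together with monotonicity and $L_{F}$-Lipschitzness of $F$, as in Korpelevich's original Fej\'er analysis. Summing from $i = 0$ to $i = k$ telescopes the right-hand side, and discarding the nonnegative term $\norm{z^{k+1} - z^{\star}}^{2}$ gives $\sum_{i=0}^{k}\norm{F(z^{i})}^{2} \leq \norm{z^{0} - z^{\star}}^{2}\big/\p{\gamma^{2}(1 - \gamma^{2}L_{F}^{2})}$ for every $k$. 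Because $\p{\norm{F(z^{i})}^{2}}_{i}$ is nonincreasing, the left side is at least $(k+1)\norm{F(z^{k})}^{2}$, and dividing by $k+1$ produces \eqref{eq:EG:g_zero:last_iterate_rate}. Letting $k \to \infty$ in the partial-sum bound shows $\sum_{i}\norm{F(z^{i})}^{2} < \infty$, and combining summability with monotonicity and the standard fact that a nonincreasing nonnegative summable sequence $a_{k}$ obeys $k a_{k} \to 0$ (bound $\lceil k/2 \rceil\, a_{k}$ by the tail $\sum_{i \geq \lceil k/2 \rceil} a_{i} \to 0$) yields \eqref{eq:EG:g_zero:little-o}.

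I do not expect a genuine obstacle here: the only thing worth flagging is that the two ingredients play complementary roles — \eqref{eq:EG:new_Lyapunov_w.o._sol:g_zero} supplies monotonicity of the residual but no quantitative bound, whereas the Fej\'er estimate supplies summability but no last-iterate rate — and it is their conjunction (plus the sequence lemma) that yields both the explicit $1/(k+1)$ bound and the $o(1/k)$ sharpening. The remaining work, namely the algebraic collapse $\mathcal{V}_{k} = \norm{F(z^{k})}^{2}$ and the telescoping sum, is routine.
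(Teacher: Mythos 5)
Your proposal is correct and follows essentially the same route as the paper: specialize the Lyapunov descent of \Cref{thm:EG:new_Lyapunov_w.o._sol} to \(g=0\) (where \(\V_k\) collapses to \(\norm{F(z^k)}^2\)), combine it with the \(g=0\) case of the Fej\'er estimate \eqref{eq:EG:Lyapunov_with_solution}, telescope, and use monotonicity of \(\p{\norm{F(z^k)}^2}_{k\in\naturals}\) to extract both the \(1/(k+1)\) bound and the \(o(1/k)\) refinement. The only difference is that you spell out the algebraic collapse of \(\V_k\) and the summable-monotone-sequence lemma explicitly, which the paper leaves implicit.
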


The following result shows that \(\V_k\), when scaled by a nonnegative constant, equals the residual of a Fej\'er-type inequality. As a direct consequence, this gives a \(o(1/k)\) last-iterate convergence result in terms of \(\V_k\) as presented in \Cref{cor:lastiter:T}. 

\begin{theorem}\label{thm:EG:primal_descent_for_V}
    Suppose that \Cref{ass:main} holds, the sequence \(\p{\p{z^{k},\bar{z}^{k}}}_{k\in\naturals}\) is generated by~\eqref{alg:EG} with initial point \(z^0\in \calH\) and step-size parameter \(\gamma\in(0,1/L_{F}]\), and the sequence \(\p{\mathcal{V}_{k}}_{k\in\naturals}\) is given by \(\V_k =\mathcal{V}(z^k, \bar z^k, z^{k+1})\) for each \(k\in \naturals\) and the Lyapunov function \(\V\) defined in~\eqref{eq:V}. Then, for any \(k\in\naturals\) and \(z^{\star}\in\zer\p{F + \partial g}\) it holds that
    \begin{equation}\label{eq:EG:primal_descent_for_V}
        \norm{z^{k+1}-z^{\star}}^{2} \leq \norm{z^{k}-z^{\star}}^{2} 
        - \alpha\p{\gamma, L_F} \V_k,
    \end{equation}
    where
    \begin{align}\label{eq:EG:primal_descent_for_V:alpha}
        \alpha\p{\gamma, L_F} = \tfrac{\gamma^{2}}{2}\p{\sqrt{5-4\gamma^2 L_{F}^2 } - 1} \geq 0.
    \end{align}
    \begin{proof}
         Note that~\eqref{eq:subgrad:char} and \(-F\p{z^{\star}} \in \partial g\p{z^{\star}}\) can equivalently be characterized by
        \begin{subequations}
            \begin{align}\label{eq:primal_descent_for_V:subdiff:1}
                0&\leq g\p{z} -  g\p{\bar{z}^{k}} - \inner{ \gamma^{-1} \p{ z^{k} - \bar{z}^{k} } - F\p{z^{k}}}{z - \bar{z}^{k} }, \\\label{eq:primal_descent_for_V:subdiff:2}
                0&\leq  g(z) - g\p{z^{k+1}} - \inner{ \gamma^{-1} \p{ z^{k} - z^{k+1} } - F\p{\bar{z}^{k}}}{z - z^{k+1} },\\\label{eq:primal_descent_for_V:subdiff:3}
                0&\leq g\p{z} - g(z^{\star}) - \inner{ - F\p{z^{\star}}}{z - z^{\star} }
            \end{align}
        \end{subequations}
        for each \(z\in\calH\). Picking \(z = z^{k+1}\) in~\eqref{eq:primal_descent_for_V:subdiff:1}, \(z = z^{\star}\) in~\eqref{eq:primal_descent_for_V:subdiff:2}, \(z = \bar{z}^{k}\) in~\eqref{eq:primal_descent_for_V:subdiff:3}, summing the resulting inequalities, and multiplying by \(2\gamma\) gives  
        {\begin{allowdisplaybreaks}
        \begin{align}
            0 &\leq 
            \begin{aligned}[t] 
                & - 2\inner{ z^{k} - \bar{z}^{k}}{z^{k+1} - \bar{z}^{k} } + 2\gamma\inner{  F\p{z^{k}}}{z^{k+1} - \bar{z}^{k} } \\
                & - 2\inner{ z^{k} - z^{k+1}}{z^{\star} - z^{k+1} }  +  2\gamma\inner{ F\p{\bar{z}^{k}}}{z^{\star} - z^{k+1} } \\ 
                & - 2\gamma\inner{F\p{\bar{z}^{k}} - F\p{z^{\star}}}{\bar{z}^{k} - z^{\star} } + 2\gamma\inner{F\p{\bar{z}^{k}}}{\bar{z}^{k} - z^{\star} } 
            \end{aligned}\notag\\
            &\leq 
            \begin{aligned}[t] 
                & \norm{z^{k} -z^{k+1}}^{2} - \norm{z^{k} - \bar{z}^{k}}^{2} - \norm{z^{k+1} - \bar{z}^{k}}^{2} + 2\gamma\inner{  F\p{z^{k}}}{z^{k+1} - \bar{z}^{k} } \\
                & + \norm{z^{k} - z^{\star}}^{2} - \norm{z^{k} - z^{k+1}}^{2} - \norm{z^{\star} - z^{k+1}}^{2}  +  2\gamma\inner{ F\p{\bar{z}^{k}}}{z^{\star} - z^{k+1} } \\ 
                &+ 2\gamma\inner{F\p{\bar{z}^{k}}}{\bar{z}^{k} - z^{\star} } 
            \end{aligned}\notag\\
            &= 
            \begin{aligned}[t] 
                & \norm{z^{k} - z^{\star}}^{2} - \norm{z^{\star} - z^{k+1}}^{2} - \norm{z^{k} - \bar{z}^{k}}^{2} - \norm{z^{k+1} - \bar{z}^{k}}^{2} \\ 
                &- 2\gamma\inner{  F\p{z^{k}} - F\p{\bar{z}^{k}}}{ \bar{z}^{k} - z^{k+1} },
            \end{aligned} \label{eq:V:primal_ineq:sub_ineq_1}
        \end{align}
        \end{allowdisplaybreaks}}%
        where the identity \(-2\inner{x}{y} = \norm{x-y}^2 - \norm{x}^2 - \norm{y}^2\) for each \(x,y\in \calH\) and monotonicity of \(F\) is used in the second inequality. Picking \(z = z^{k+1}\) in~\eqref{eq:primal_descent_for_V:subdiff:1}, \(z = \bar{z}^{k}\) in~\eqref{eq:primal_descent_for_V:subdiff:2},  and summing the resulting inequalities gives
        {\begin{allowdisplaybreaks}
        \begin{align}
            0 &\leq 
            \begin{aligned}[t]
                & g\p{z^{k+1}} - g\p{\bar{z}^{k}} - \inner{ \gamma^{-1} \p{ z^{k} - \bar{z}^{k} } - F\p{z^{k}}}{z^{k+1} - \bar{z}^{k} } \\
                & + g\p{\bar{z}^{k}} - g\p{z^{k+1}} - \inner{\gamma^{-1} \p{ z^{k} - z^{k+1} } - F\p{\bar{z}^{k}}}{\bar{z}^{k}-z^{k+1}}
            \end{aligned} \notag \\
            &=  - \inner{ F\p{z^{k}} - F\p{\bar{z}^{k}}  }{ \bar{z}^{k} - z^{k+1} }  -  \gamma^{-1}\norm{ \bar{z}^{k} - z^{k+1}  }^{2}. \label{eq:V:primal_ineq:sub_ineq_2}
        \end{align}
        \end{allowdisplaybreaks}}%
        For notational simplicity, we let \(\alpha = \alpha\p{\gamma, L_{F}}\) for \(\alpha\p{\gamma, L_{F}}\) as in~\eqref{eq:EG:primal_descent_for_V:alpha}, where simple algebra shows that \(\alpha \geq 0\) if and only if \(\gamma L_{F} \leq 1\). Multiplying~\eqref{eq:V:primal_ineq:sub_ineq_2} with \(2 \alpha\gamma^{-1}\), and adding the result to~\eqref{eq:V:primal_ineq:sub_ineq_1} gives 
        {\begin{allowdisplaybreaks}
        \begin{align*}
            0 &\leq 
            \begin{aligned}[t] 
                &\norm{z^{k} - z^{\star} }^{2} - \norm{z^{k+1} - z^{\star}}^{2} - \norm{z^{k} - \bar{z}^{k}}^{2} - (1+2\alpha\gamma^{-2}) \norm{z^{k+1} - \bar{z}^{k}}^{2} \\
                &  - 2\gamma(1+\alpha\gamma^{-2}) \inner{  F\p{z^{k}} - F\p{\bar{z}^{k}}}{\bar{z}^{k} - z^{k+1} } 
                \end{aligned} \\
            &= \norm{z^{k} - z^{\star}}^{2} - \norm{z^{k+1} - z^{\star}}^{2}  -  \alpha\V_k + A_k,
        \end{align*}
        \end{allowdisplaybreaks}}%
        where
        \begin{align}
            A_k 
            ={}&
             - \norm{z^{k} - \bar{z}^{k}}^{2}  - \p{1+ 2\alpha\gamma^{-2}} \norm{z^{k+1} - \bar{z}^{k}}^{2} - 2\gamma\p{1+ \alpha\gamma^{-2}} \inner{  F\p{z^{k}} - F\p{\bar{z}^{k}}}{\bar{z}^{k} - z^{k+1} } 
             \nonumber\\  
            &{} + \alpha\V_k 
            \nonumber\\
            ={}& - \norm{z^{k} - \bar{z}^{k}}^{2}  - \p{1+ \alpha\gamma^{-2}} \norm{z^{k+1} - \bar{z}^{k}}^{2} 
                +
                 \alpha\gamma^{-2}\norm{z^{k} - z^{k+1}}^{2} 
            \nonumber\\  
            &{}
            +
            2\gamma
            \smash{\underbracket{\inner{%
            \alpha\gamma^{-2}\p{z^{k} - z^{k+1}} 
            - 
            (1+\alpha\gamma^{-2})\p{\bar z^k - z^{k+1}}
            }{F\p{z^{k}} - F\p{\bar{z}^{k}}}}_{= B_k}}, 
            \label{eq:Ak:thm:tight}
        \end{align}
        where we substituted \(\V_k\).
        
        To complete the proof, it is enough to show that \(A_k \leq 0\). The last inner product in~\eqref{eq:Ak:thm:tight} can be upper bounded using Young's inequality as
        \begin{align} \notag
            B_k \leq {}&
            \tfrac{\gamma}{2}
            \norm{F\p{z^{k}} - F\p{\bar{z}^{k}}}^2 
            + \tfrac{1}{2\gamma}
            \norm{\alpha\gamma^{-2}\p{z^{k} - z^{k+1}} - (1+\alpha\gamma^{-2})\p{\bar z^k - z^{k+1}}}^2 \\\notag
            \leq{}&
            \tfrac{\gamma L_F^2}{2}
            \norm{z^{k} - \bar{z}^{k}}^2 \\ \label{eq:Bk:thm:tight}
            &{} + 
            \tfrac{1}{2\gamma} 
            \p{            
            \p{1+\alpha\gamma^{-2}}\norm{\bar{z}^k - z^{k+1}}^2 
            + \alpha\gamma^{-2}\p{1+\alpha\gamma^{-2}} \norm{z^{k} - \bar{z}^k}^2
            - \alpha\gamma^{-2}\norm{z^{k} - z^{k+1}}^2  
            },
        \end{align}
        where Lipschitz continuity of \(F\) and the identity 
        \(\norm{\beta x - \p{1+\beta}y}^2 = \p{1+\beta}\norm{y}^2 + \beta(1+\beta)\norm{x-y}^2 - \beta\norm{x}^2\)
        for each \(x,y\in\calH\) and each \(\beta\in\reals\) {\cite[Corollary 2.15]{bauschke2017convexanalysismonotone}} are used in the second inequality. Substituting~\eqref{eq:Bk:thm:tight} in~\eqref{eq:Ak:thm:tight} gives 
        \begin{equation}
            A_k \leq{} - \p{1- \gamma^2 L_F^2 - \alpha\gamma^{-2}\p{1+\alpha\gamma^{-2}}} \norm{z^{k} - \bar{z}^{k}}^{2}  = 0,
       \end{equation}
       where the last equality follows from simple algebra after substituting \(\alpha = \alpha\p{\gamma, L_F}\) as in~\eqref{eq:EG:primal_descent_for_V:alpha}. 
    \end{proof}
\end{theorem}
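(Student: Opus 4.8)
The plan is to derive the inequality directly from the variational characterizations of the two proximal steps in~\eqref{alg:EG}, augmented with the observation that $z^{\star}\in\zer(F+\partial g)$ is equivalent to $-F(z^{\star})\in\partial g(z^{\star})$. Writing the first prox step as $\gamma^{-1}(z^{k}-\bar z^{k})-F(z^{k})\in\partial g(\bar z^{k})$ and the second as $\gamma^{-1}(z^{k}-z^{k+1})-F(\bar z^{k})\in\partial g(z^{k+1})$, I would instantiate each of the three resulting subgradient inequalities at a test point chosen so that the $g$-values cancel pairwise: $z=z^{k+1}$ in the first, $z=z^{\star}$ in the second, and $z=\bar z^{k}$ in the third. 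Summing these (scaled by $2\gamma$) leaves a relation involving only inner products, which I convert to squared norms via the polarization identity $-2\langle x,y\rangle=\norm{x-y}^{2}-\norm{x}^{2}-\norm{y}^{2}$; this already exposes the telescoping pair $\norm{z^{k}-z^{\star}}^{2}-\norm{z^{k+1}-z^{\star}}^{2}$, a residual built from $\norm{z^{k}-\bar z^{k}}^{2}$ and $\norm{z^{k+1}-\bar z^{k}}^{2}$, and a cross term $\langle F(z^{k})-F(\bar z^{k}),\bar z^{k}-z^{k+1}\rangle$, after discarding the nonnegative term $\langle F(\bar z^{k})-F(z^{\star}),\bar z^{k}-z^{\star}\rangle$ by monotonicity of $F$.

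To bring $\mathcal{V}_{k}$ into the picture I would form a second, auxiliary combination of the same two prox inequalities — taking $z=z^{k+1}$ in the first and $z=\bar z^{k}$ in the second, with no solution term — which collapses to the clean nonnegativity $-\langle F(z^{k})-F(\bar z^{k}),\bar z^{k}-z^{k+1}\rangle-\gamma^{-1}\norm{\bar z^{k}-z^{k+1}}^{2}\geq 0$. Multiplying this by a free parameter proportional to $\alpha$, adding it to the first relation, and substituting the explicit formula for $\mathcal{V}_{k}$ should let me rewrite everything in the form $0\leq\norm{z^{k}-z^{\star}}^{2}-\norm{z^{k+1}-z^{\star}}^{2}-\alpha\mathcal{V}_{k}+A_{k}$, where $A_{k}$ collects the remaining $\norm{z^{k}-\bar z^{k}}^{2}$, $\norm{z^{k+1}-\bar z^{k}}^{2}$, $\norm{z^{k}-z^{k+1}}^{2}$ terms together with one leftover inner product.

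It then remains to show $A_{k}\leq 0$ for the right $\alpha$. I would bound the leftover inner product by Young's inequality with weight $\gamma$, use $L_{F}$-Lipschitzness to replace $\norm{F(z^{k})-F(\bar z^{k})}^{2}$ by $L_{F}^{2}\norm{z^{k}-\bar z^{k}}^{2}$, and expand the remaining squared norm with the identity $\norm{\beta x-(1+\beta)y}^{2}=(1+\beta)\norm{y}^{2}+\beta(1+\beta)\norm{x-y}^{2}-\beta\norm{x}^{2}$ so that the $\norm{z^{k+1}-\bar z^{k}}^{2}$ and $\norm{z^{k}-z^{k+1}}^{2}$ contributions cancel exactly. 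What is left is a single multiple of $\norm{z^{k}-\bar z^{k}}^{2}$ with coefficient $-(1-\gamma^{2}L_{F}^{2}-\alpha\gamma^{-2}(1+\alpha\gamma^{-2}))$; choosing $\alpha$ to annihilate this coefficient amounts to solving the scalar quadratic $\gamma^{-4}\alpha^{2}+\gamma^{-2}\alpha-(1-\gamma^{2}L_{F}^{2})=0$, whose nonnegative root is precisely $\alpha(\gamma,L_{F})=\tfrac{\gamma^{2}}{2}(\sqrt{5-4\gamma^{2}L_{F}^{2}}-1)$, and this root is nonnegative exactly when $\gamma L_{F}\leq 1$. The main obstacle I anticipate is not the algebra but the design choices: selecting the three test points and the auxiliary combination so that the $g$-terms telescope away and the residual has exactly the shape needed to absorb a multiple of $\mathcal{V}_{k}$ — once that structure is in place, the rest is bookkeeping and a quadratic.
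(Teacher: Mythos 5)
Your proposal is correct and follows essentially the same route as the paper's proof: the same three test points in the subgradient inequalities, the same auxiliary combination yielding $-\langle F(z^{k})-F(\bar z^{k}),\bar z^{k}-z^{k+1}\rangle-\gamma^{-1}\norm{\bar z^{k}-z^{k+1}}^{2}\geq 0$, the same Young/Lipschitz bound with the identity $\norm{\beta x-(1+\beta)y}^{2}=(1+\beta)\norm{y}^{2}+\beta(1+\beta)\norm{x-y}^{2}-\beta\norm{x}^{2}$, and the same scalar quadratic determining $\alpha(\gamma,L_F)$. Nothing to correct.
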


\begin{corollary}\label{cor:lastiter:T}
    Suppose that \Cref{ass:main} and \(\zer\p{F + \partial g}\neq \emptyset\) hold, and the sequence \(\p{\p{z^{k},\bar{z}^{k}}}_{k\in\naturals}\) is generated by~\eqref{alg:EG} with initial point \(z^0\in \calH\) and step-size parameter \(\gamma\in(0,1/L_{F})\). \new{Then,
    \begin{align}\label{eq:Vk:last_iterate:little-o}
        \V_{k} \in o\p{1/k} \text{ as } k \to \infty,
    \end{align}
    and for any \(k\in\naturals\) and \(z^{\star}\in\zer\p{F + \partial g}\) it holds that  
    \begin{align}\label{eq:Vk:last_iterate}
        \V_{k} \leq \frac{\norm{z^{0} - z^{\star} }^{2} }{ \alpha\p{ \gamma, L_{F}} \p{k+1} },
    \end{align}
    where \(\alpha\p{ \gamma, L_{F}}>0\) is defined in~\eqref{eq:EG:primal_descent_for_V:alpha} and \(\V_k =\mathcal{V}(z^k, \bar z^k, z^{k+1})\) for the Lyapunov function \(\V\) defined in~\eqref{eq:V}.}
    \begin{proof}
        \new{The last-iterate convergence results in \eqref{eq:Vk:last_iterate:little-o} and \eqref{eq:Vk:last_iterate} follows by inductively summing~\eqref{eq:EG:primal_descent_for_V}, rearranging, dividing by \(\alpha\p{ \gamma, L_{F}}\), and using monotonicity of \(\p{\V_k}_{k\in\naturals}\) as shown in~\eqref{eq:EG:new_Lyapunov_w.o._sol}.} 
    \end{proof}
\end{corollary}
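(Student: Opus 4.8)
The plan is to combine the descent inequality of \Cref{thm:EG:new_Lyapunov_w.o._sol} with the Fej\'er-type inequality of \Cref{thm:EG:primal_descent_for_V}, and then to invoke the standard principle that a nonnegative, non-increasing, summable sequence decays like \(o(1/k)\).

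First I would record two elementary consequences of the standing hypothesis \(\gamma\in(0,1/L_{F})\). Since \(1-\gamma^{2}L_{F}^{2}>0\), inequality \eqref{eq:EG:new_Lyapunov_w.o._sol} shows that \((\V_{k})_{k\in\naturals}\) is non-increasing; combined with \Cref{prop:V_performance_measure:lower_bound} this also gives \(\V_{k}\geq 0\) for every \(k\). Moreover, \(\gamma L_{F}<1\) yields \(\sqrt{5-4\gamma^{2}L_{F}^{2}}>1\), so \(\alpha\p{\gamma,L_{F}}>0\) and division by \(\alpha\p{\gamma,L_{F}}\) is legitimate.

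Next, fix any \(z^{\star}\in\zer(F+\partial g)\), which is nonempty by assumption, and sum \eqref{eq:EG:primal_descent_for_V} over \(i\in\llbracket 0,k\rrbracket\). The right-hand side telescopes, and dropping the nonnegative term \(\norm{z^{k+1}-z^{\star}}^{2}\) gives \(\alpha\p{\gamma,L_{F}}\sum_{i=0}^{k}\V_{i}\leq\norm{z^{0}-z^{\star}}^{2}\) for every \(k\in\naturals\). Using monotonicity of \((\V_{i})\) in the form \((k+1)\V_{k}\leq\sum_{i=0}^{k}\V_{i}\) and dividing by \(\alpha\p{\gamma,L_{F}}(k+1)\) yields \eqref{eq:Vk:last_iterate}. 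Letting \(k\to\infty\) in the summed bound shows in addition that \(\sum_{i=0}^{\infty}\V_{i}<\infty\).

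Finally, to upgrade \eqref{eq:Vk:last_iterate} to the \(o(1/k)\) statement \eqref{eq:Vk:last_iterate:little-o}, I would combine monotonicity of \((\V_{i})\) with the finiteness of \(\sum_{i}\V_{i}\): from \(\lfloor k/2\rfloor\,\V_{k}\leq\sum_{i=\lceil k/2\rceil}^{k}\V_{i}\), the right-hand side is a tail of a convergent series and hence vanishes as \(k\to\infty\), so \(k\V_{k}\to0\). I do not anticipate a genuine obstacle, as the analytical content is entirely carried by the two cited theorems; the only point requiring a little care is that neither summability nor monotonicity alone delivers the sharp \(o(1/k)\) rate, so both ingredients must be used.
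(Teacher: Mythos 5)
Your proposal is correct and follows essentially the same route as the paper: sum the Fej\'er-type inequality \eqref{eq:EG:primal_descent_for_V} to obtain summability of \((\V_k)_{k\in\naturals}\), use the monotonicity from \eqref{eq:EG:new_Lyapunov_w.o._sol} to deduce \((k+1)\V_k\le\sum_{i=0}^k\V_i\) for the explicit bound, and combine summability with monotonicity via the standard tail argument for the \(o(1/k)\) claim. The only difference is that you spell out the tail estimate explicitly, which the paper leaves implicit.
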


\new{
\begin{remark}\label{rmk:cyclically_monotone}
    A close review of the proofs of \Cref{thm:EG:new_Lyapunov_w.o._sol,thm:EG:primal_descent_for_V} (and therefore also \Cref{cor:lastiter:T}) shows that the arguments remain valid when \(\partial g\) in \eqref{eq:the_inclusion_problem} is replaced with a maximally monotone and 3-cyclically monotone operator \(T:\calH\to\calH\) (see \cite[Definition 22.13]{bauschke2017convexanalysismonotone}) and the proximal operators \(\prox_{\gamma g}\) in \eqref{alg:EG} with the resolvent \(\p{\Id + \gamma T}^{-1}\); the only change is to use cyclic-monotonicity inequalities in place of the subgradient inequalities. We nonetheless choose to present the slightly more restrictive subdifferential‐based formulation, as it avoids the added abstraction of cyclic monotonicity and is likely more immediately accessible to a broader audience.
\end{remark}
}

\section{Algorithms for monotone inclusions}\label{sec:algs}
In light of the descent property established in \Cref{thm:EG:new_Lyapunov_w.o._sol}, we propose line-search extensions of the extragradient method that combine the nominal steps of~\eqref{alg:EG} with user-specified directions. This section focuses on identifying appropriate conditions to guarantee global convergence, with detailed proofs deferred to \Cref{sec:global}. We deliberately leave the choice of directions open at this stage and postpone the superlinear convergence analysis to \Cref{sec:superlinear}. This abstraction offers flexibility in choosing methods---such as (inexact) (quasi-)Newton approaches, Anderson acceleration, or other suitable algorithms---for computing the directions.

In the first subsection, we consider the classical extragradient setting where \(g = 0\) and introduce a line search based on \(\|F(z^k)\|^2\) and its descent inequality in~\eqref{eq:EG:new_Lyapunov_w.o._sol:g_zero}. We then extend our approach in \Cref{sec:ProxFLEX} to the more general setting of~\eqref{eq:the_inclusion_problem}. Separating the analysis in this way reflects the stronger convergence results available when \(g = 0\), as well as the fact that, in this case, the line search is more computationally efficient.

\subsection{Fast line-search extragradient}\label{sec:FLEX}
In this subsection, we focus on the case \(g = 0\) in~\eqref{eq:the_inclusion_problem}, where the Lyapunov inequality~\eqref{eq:EG:new_Lyapunov_w.o._sol} simplifies to~\eqref{eq:EG:new_Lyapunov_w.o._sol:g_zero}. The first algorithm introduced here is~\refFLEX, which can be viewed as a hybrid scheme in the same spirit as \cite[Algorithm 5.16]{izmailov2014newtontypemethods}. At each iteration, it computes a suitable direction \(d^k\) (see \Cref{sec:superlinear}) and performs the updates \(z^{k+1} = z^k + d^k\) whenever the contraction condition in \Cref{State:FLEX:contraction} holds. Otherwise, it conducts a line search based on the descent inequality~\eqref{eq:EG:new_Lyapunov_w.o._sol:g_zero}, serving as a \emph{performance safeguard}.

Before we present the convergence results for~\FLEX{}, we offer some observations on the line-search procedure.

\begin{remark}
    \label{rem:FLEX:LS} 
   The line-search interpolation strategy in~\FLEX{} is designed to ensure global convergence while infusing local update directions in the algorithm. It differs from standard line-search procedures in some respects. 
    \begin{enumerate}
        \item\label{rem:FLEX:LS:bad_case}%
        After a finite number of backtracks, the method defaults to \(\tau_k = 0\), at which point~\eqref{eq:FLEX:descent} is satisfied due to~\eqref{eq:EG:new_Lyapunov_w.o._sol:g_zero} in \Cref{cor:EG:new_Lyapunov_w.o._sol:g_zero}. 
        Taking the nominal step after a finite number of trials is not just a practical consideration but is also theoretically grounded. Without additional assumptions, it is possible that \(\|F(z^k) - F(\bar{z}^k)\| = 0\) even when no solution has been found, and~\eqref{eq:FLEX:descent} is not satisfied by any \(\tau_k > 0\) for some ill-chosen user-specified direction \(d^k\). Therefore, additional assumptions are required for such edge cases if an infinite backtracking strategy with known finite termination is to be employed. This is further explored in \Cref{sec:IFLEX}.
        
        \item 
        Enforcing a descent inequality as in~\eqref{eq:FLEX:descent} of \Cref{State:FLEX:LS} can be viewed as a performance safeguarding. 
        As it is shown in \Cref{thm:FLEX:dk_summable} below, the convergence of~\FLEX{} can be guaranteed provided that \(\seq{d^k} \in \ell^{1}(\naturals;\,\calH)\). Therefore, the descent inequality within the line-search procedure ensures that the directions contribute effectively to the convergence, preventing arbitrarily poor performance.  
    \end{enumerate}  
\end{remark}
The next theorem establishes global convergence of~\FLEX{} under the assumption that the directions are summable, a setting that will be revisited in \Cref{sec:superlinear}, see also \Cref{thm:suplin:summable}.
Alternatively, if \(F\) is uniformly monotone, the summability assumption is dropped, as shown in \Cref{thm:FLEX:F_unifrom}. Moreover, when \(F\) is \(\mu_F\)-strongly monotone, as in \Cref{thm:FLEX:F_strongly}, a linear convergence rate is achieved.

\begin{algorithm}[t]
    \caption{\FLEX~(Fast Line-search EXtragradient)}
    \label{alg:FLEX}
    \begin{algorithmic}[1]
        \INITIALIZE  \(z^{0}\in\calH\), \(\gamma\in\p{0,{1}/{L_{F}}}\), \(\p{\rho,\sigma,\beta} \in \p{0,1}^{3}\), \(M\in\mathbb{N}_{0}\)
        \For{\(k = 0, 1, 2, \ldots\)}
            \State \label{State:FLEX:dk} Compute a direction \(d^{k}\in\calH\) at \(z^{k}\)
            \If{\(\norm{F\p{z^{k} + d^{k} }} \leq \rho\norm{F\p{z^{k}}}\)} \label{State:FLEX:contraction}
                \State \label{State:FLEX:full} \(z^{k+1} = z^{k} + d^{k}\)
            \Else
                \State \label{State:FLEX:EG} \(\bar{z}^{k} =  z^{k} - \gamma F\p{z^{k}}\); \(w^{k} = z^{k} - \gamma F\p{\bar{z}^{k}}\)
                \State \label{State:FLEX:LS} 
                \begin{tabular}[t]{@{}r@{}l@{}}
                     Set \(z^{k+1} = \p{1-\tau_{k}} w^{k} + \tau_{k} (z^{k} + d^{k})\) where \(\tau_{k}\) is the largest number in
                    \\
                     \(\set{ \beta^{i} \mid i \in \llbracket 1 , M \rrbracket}\cup\set{0}\) such that \hfill\hfill
                    \end{tabular}
                \begin{align}\label{eq:FLEX:descent}
                    \norm{F\p{z^{k+1}}}^{2} \leq \norm{F\p{z^{k}}}^{2} - \sigma\p{1-\gamma^{2}L_{F}^{2}} \norm{F\p{z^{k}}-F\p{\bar{z}^{k}}}^{2}
                \end{align}    
            \EndIf
        \EndFor
    \end{algorithmic}
\end{algorithm}

\begin{theorem}\label{thm:FLEX}
    Suppose that \Cref{ass:F} holds,  \(\zer \p{F}\neq \emptyset\), and the sequence \(\p{z^{k}}_{k\in\naturals}\) is generated by~\refFLEX. Then, the following hold.
    \begin{thmenumerate}

        \item \label{thm:FLEX:dk_summable} If \(\seq{d^k}\in\ell^{1}(\naturals;\,\calH)\), then \(\seq{z^k}\) converges weakly to some point in \(\zer \p{F}\). 

        \item \label{thm:FLEX:F_unifrom} If \(F\) is uniformly monotone, then \(\seq{z^k}\) converges weakly to some point in \(\zer \p{F}\).
    
        \item \label{thm:FLEX:F_strongly} If there exists \(0<\mu_F\leq L_F\) such that \(\mu_F \norm{x - y}\leq\norm{F\p{x}- F\p{y}}\) for each \(x,y\in \calH\), then \(\seq{z^k}\) converges strongly to some point in \(\zer \p{F}\) and
        \begin{align}\label{eq:FLEX:F_strongly:linear}
                \norm{F(z^{k+1})}^2 \leq 
                \smash{\underbracket{
                \max\p{ \rho^2, 1- \sigma\gamma^{2}\mu_{F}^{2}\p{1-\gamma^{2}L_{F}^{2}} }
                }_{\in(0,1)}}
                \norm{F(z^{k})}^2
            \end{align} 
        for each \(k\in\naturals\).
    \end{thmenumerate}
\end{theorem}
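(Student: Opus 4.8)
The plan is to isolate a few facts that hold along any run of \FLEX{} and then specialize to the three cases.

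\emph{Well-posedness and monotonicity of the residual.} The line search in \Cref{State:FLEX:LS} always terminates: taking $\tau_k=0$ yields $z^{k+1}=w^k$, and since $\bar z^k=z^k-\gamma F(z^k)$ and $w^k=z^k-\gamma F(\bar z^k)$ are exactly the nominal extragradient iterates at $z^k$, \eqref{eq:EG:new_Lyapunov_w.o._sol:g_zero} of \Cref{cor:EG:new_Lyapunov_w.o._sol:g_zero} gives $\norm{F(w^k)}^2\le\norm{F(z^k)}^2-(1-\gamma^2L_F^2)\norm{F(z^k)-F(\bar z^k)}^2$, so \eqref{eq:FLEX:descent} holds because $\sigma\in(0,1)$. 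Combining the two branches, $(\norm{F(z^k)})_{k\in\naturals}$ is non-increasing (a \Cref{State:FLEX:full} step contracts it by $\rho$, a \Cref{State:FLEX:LS} step satisfies \eqref{eq:FLEX:descent}), hence $\norm{F(z^k)}\to\ell$ for some $\ell\ge0$.

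\emph{The core step: $\norm{F(z^k)}\to0$.} Split on whether \Cref{State:FLEX:full} is executed infinitely often. If it is, then along that subsequence $\norm{F(z^{k+1})}\le\rho\norm{F(z^k)}$; letting $k\to\infty$ and using convergence of the whole sequence gives $\ell\le\rho\ell$, so $\ell=0$. If \Cref{State:FLEX:full} is executed only finitely often, then eventually every step is a line-search step, \eqref{eq:FLEX:descent} telescopes, and $\sum_k\norm{F(z^k)-F(\bar z^k)}^2<\infty$, whence $\norm{F(z^k)-F(\bar z^k)}\to0$. Each hypothesis now closes this case differently. Under the hypothesis of (iii), $z^k-\bar z^k=\gamma F(z^k)$ gives $\norm{F(z^k)-F(\bar z^k)}\ge\mu_F\gamma\norm{F(z^k)}$; substituting into \eqref{eq:FLEX:descent} yields the per-iteration bound \eqref{eq:FLEX:F_strongly:linear} directly, and its factor lies in $(0,1)$ since $\gamma L_F<1$ and $\mu_F\le L_F$, so in fact $\norm{F(z^k)}\to0$ linearly regardless of the split. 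Under the hypothesis of (ii), monotonicity and Cauchy--Schwarz give $\phi(\gamma\norm{F(z^k)})\le\inner{F(z^k)-F(\bar z^k)}{z^k-\bar z^k}\le\gamma\norm{F(z^k)-F(\bar z^k)}\,\norm{F(z^k)}\to0$, and since $\phi$ is increasing and vanishes only at $0$ we conclude $\norm{F(z^k)}\to0$. Case (i) is handled below via summability.

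\emph{Conclusions for (iii) and (i).} For (iii), $\mu_F$-expansiveness makes $F$ injective, so $\zer(F)=\{z^\star\}$ and $\mu_F\norm{z^k-z^\star}\le\norm{F(z^k)-F(z^\star)}=\norm{F(z^k)}\to0$ linearly, giving strong (indeed linear) convergence of $(z^k)$ to $z^\star$; \eqref{eq:FLEX:F_strongly:linear} was already obtained. For (i), using $\norm{w^k-z^\star}\le\norm{z^k-z^\star}$ (from \eqref{eq:EG:Lyapunov_with_solution:g_zero_and_relaxed}) and the triangle inequality in both branches gives $\norm{z^{k+1}-z^\star}\le\norm{z^k-z^\star}+\norm{d^k}$ for every $z^\star\in\zer(F)$; since $\seq{d^k}\in\ell^{1}(\naturals;\,\calH)$, the sequence $(z^k)$ is quasi-Fej\'er monotone with respect to $\zer(F)$, hence bounded, say $\norm{z^k-z^\star}\le R$, with $(\norm{z^k-z^\star})$ convergent for each $z^\star\in\zer(F)$. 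Applying convexity of $\norm{\cdot}^2$ to $z^{k+1}=(1-\tau_k)w^k+\tau_k(z^k+d^k)$ with $\tau_k\le\beta<1$ and \eqref{eq:EG:Lyapunov_with_solution:g_zero_and_relaxed} refines this, on line-search steps, to $\norm{z^{k+1}-z^\star}^2\le\norm{z^k-z^\star}^2-(1-\beta)\gamma^2(1-\gamma^2L_F^2)\norm{F(z^k)}^2+2R\norm{d^k}+\norm{d^k}^2$ (the negative term being absent on full steps); telescoping and $\ell^{1}\subseteq\ell^{2}$ yield $\sum_k\norm{F(z^k)}^2<\infty$ over the line-search indices, which with monotonicity of $(\norm{F(z^k)})$ (and geometric decay if line-search steps eventually cease) forces $\norm{F(z^k)}\to0$. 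Finally, a monotone, $L_F$-Lipschitz, everywhere-defined operator on $\calH$ is maximally monotone, so its graph is sequentially closed for weak--strong convergence; hence every weak cluster point $\bar z$ of $(z^k)$ satisfies $F(\bar z)=0$, i.e.\ $\bar z\in\zer(F)$, and quasi-Fej\'er monotonicity together with all weak cluster points lying in $\zer(F)$ gives weak convergence of $(z^k)$ to a point of $\zer(F)$.

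\emph{Conclusion for (ii), and the main obstacle.} Here $\norm{F(z^k)}\to0$ is in hand. Telescoping $\phi(\norm{x-y})\le\inner{F(x)-F(y)}{x-y}$ along unit increments in the direction of $z^k-z^\star$ shows $\inner{F(z^k)-F(z^\star)}{z^k-z^\star}\ge t(t-1)\phi(1)$ whenever $t:=\norm{z^k-z^\star}\ge1$, while Cauchy--Schwarz and residual monotonicity give $\inner{F(z^k)-F(z^\star)}{z^k-z^\star}\le\norm{F(z^0)}\,t$; since $\phi(1)>0$ this forces $\norm{z^k-z^\star}\le1+\norm{F(z^0)}/\phi(1)$, so $(z^k)$ is bounded. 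Then, exactly as in case (i), maximal monotonicity and $\norm{F(z^k)}\to0$ push every weak cluster point of $(z^k)$ into $\zer(F)$, which equals $\{z^\star\}$ because uniform monotonicity makes $F$ injective, so $z^k\rightharpoonup z^\star$. I expect the main obstacle to be precisely the regime where \Cref{State:FLEX:full} is executed only finitely often: there one controls only $\norm{F(z^k)-F(\bar z^k)}$, and upgrading this to $\norm{F(z^k)}\to0$ is where each structural hypothesis must be invoked through a genuinely different mechanism; a secondary technical point is the refined quasi-Fej\'er-with-residual estimate for the blended line-search iterate in case (i).
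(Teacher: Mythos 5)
Your proposal is correct; part \emph{(iii)} coincides with the paper's own argument (combine \eqref{eq:FLEX:descent} with \(\norm{F(z^k)-F(\bar z^k)}\ge\mu_F\gamma\norm{F(z^k)}\) on line-search steps and the \(\rho\)-contraction otherwise), but parts \emph{(i)} and \emph{(ii)} take a genuinely different route. For \emph{(i)} the paper simply invokes \Cref{thm:ProxFLEX} (since \ProxFLEX{} with \(g=0\) is \FLEX{}), whose proof passes through the quasi-Fej\'er estimate of \Cref{lem:zk_quasi_fejer} and a partition of the index set into contraction and line-search blocks to establish \(\sum_k\V_k<\infty\); you instead work directly with \eqref{eq:EG:Lyapunov_with_solution:g_zero_and_relaxed}, derive the same quasi-Fej\'er inequality with residual \((1-\beta)\gamma^2(1-\gamma^2L_F^2)\norm{F(z^k)}^2\) on line-search indices (using \(\tau_k\le\beta\) there), and dispose of the contraction indices by the elementary observation that if \(\norm{F(z^{k+1})}\le\rho\norm{F(z^k)}\) holds infinitely often then the monotone limit \(\ell\) of \(\norm{F(z^k)}\) satisfies \(\ell\le\rho\ell\), so \(\ell=0\). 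Both arguments yield the same conclusion; yours is shorter because \(g=0\) lets \(\norm{F(z^k)}^2\) itself play the role of the Fej\'er residual in place of \(\V_k\). For \emph{(ii)} the paper's proof is the single citation of \Cref{lem:(Prox)(I)FLEX:uniformly}, which gives \(\norm{F(z^k)}\to0\) but leaves the passage to weak convergence of the iterates implicit; your unit-increment telescoping of the modulus \(\phi\), which yields the a priori bound \(\norm{z^k-z^\star}\le 1+\norm{F(z^0)}/\phi(1)\), combined with weak--strong closedness of \(\gra F\) and injectivity of \(F\) under uniform monotonicity, actually completes that claim. Indeed, once boundedness is in hand you could even upgrade the conclusion to strong convergence via \(\phi(\norm{z^k-z^\star})\le\norm{F(z^k)}\,\norm{z^k-z^\star}\to0\).
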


\subsubsection{Variant of \texttt{FLEX} under injectivity}\label{sec:IFLEX}
As highlighted in \Cref{rem:FLEX:LS:bad_case}, since \(\|F(z^k) - F(\bar{z}^k)\| = 0\) can occur without reaching a solution, special considerations are necessary. In~\FLEX, this is addressed by employing an explicit finite termination in the line-search procedure and assuming that the directions \(d^{k}\) are summable. However, when the operator \(F\) is injective, it is possible to exploit the Lyapunov inequality in~\eqref{eq:EG:new_Lyapunov_w.o._sol:g_zero} directly to establish convergence results without additional assumptions. To this end, we introduce~\IFLEX{}, which incorporates a more traditional line-search procedure similar to that used in~\cite[Algorithm \texttt{PANOC}]{stella2017simpleefficientalgorithm}. However, the \texttt{PANOC} algorithm is developed for minimization problems and utilizes a fundamentally different Lyapunov function. Importantly,~\IFLEX{} uses an infinite backtracking strategy with guaranteed finite termination since injectivity ensures that \(\|F(z^k) - F(\bar{z}^k)\| = 0\) only when a solution has been found. Moreover,~\IFLEX{} has two fewer parameters than~\FLEX, simplifying its implementation. \new{Note that the computation of \(w^{k}\) in \Cref{State:I-FLEX:EG} in~\IFLEX{} can be deferred to the case in which \(\tau_{k}=1\) in \eqref{eq:I-FLEX:descent} fails, saving some computations.}

\begin{algorithm}[t]
    \caption{\IFLEX{} (Injective-\texttt{FLEX})}
    \label{alg:I-FLEX}
    \begin{algorithmic}[1]
        \INITIALIZE  \(z^{0}\in\calH\), \(\gamma\in\p{0,{1}/{L_{F}}}\), \(\p{\sigma,\beta} \in \p{0,1}^{2}\)
        \For{\(k = 0, 1, 2, \ldots\)}
        \State \label{State:I-FLEX:dk} Compute a direction \(d^{k}\in\calH\) at \(z^{k}\)
        \State \label{State:I-FLEX:EG} \(\bar{z}^{k} =  z^{k} - \gamma F\p{z^{k}}\); \(w^{k} = z^{k} - \gamma F\p{\bar{z}^{k}}\)
        \State \label{State:I-FLEX:LS} 
            \begin{tabular}[t]{@{}r@{}l@{}}
                     Set \(z^{k+1} = \p{1-\tau_{k}} w^{k} + \tau_{k} (z^{k} + d^{k})\) where \(\tau_{k}\) is the largest number in 
                    \\
                     \(\set{ \beta^{i} \mid i \in \naturals}\) such that \hfill\hfill
            \end{tabular}
            \begin{align}\label{eq:I-FLEX:descent}
                \norm{F\p{z^{k+1}}}^{2} \leq \norm{F\p{z^{k}}}^{2} - \sigma\p{1-\gamma^{2}L_{F}^{2}} \norm{F\p{z^{k}}-F\p{\bar{z}^{k}}}^{2}
            \end{align}    
        \EndFor
    \end{algorithmic}
\end{algorithm}

\begin{proposition}\label{prop:IFLEX:well_defined}
    Suppose that \Cref{ass:F} holds and \(F\) is injective. Then, independent of the choice of the direction \(d^{k}\) in \Cref{State:I-FLEX:dk} of~\refIFLEX{}, either there exists an iteration \(k\in\naturals\) such that \(z^{k}\in\zer\p{F}\) or the line search in \Cref{State:I-FLEX:LS} is well-defined for each iteration \(k\in\naturals\).
    \begin{proof}
        Follows from \(\sigma \in (0,1)\),~\eqref{eq:EG:new_Lyapunov_w.o._sol:g_zero}, continuity of \(F\), and that \(\norm{F\p{z^{k}}-F\p{\bar{z}^{k}}}\neq 0\) if and only if \(z^{k} \notin \zer\p{F}\).  
    \end{proof}
\end{proposition}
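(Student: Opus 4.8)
The plan is to establish the dichotomy iteration by iteration: at any iteration \(k\) that the algorithm reaches, either \(z^k\in\zer\p{F}\) — in which case the first alternative already holds — or \(z^k\notin\zer\p{F}\), and then I show the backtracking line search in \Cref{State:I-FLEX:LS} accepts some \(\tau_k\in\set{\beta^i\mid i\in\naturals}\), so that it is well-defined. Taking this over the whole (possibly infinite) run of \IFLEX\ gives exactly the stated alternative. The single place where injectivity is used is the equivalence \(\norm{F\p{z^k}-F\p{\bar z^k}}\neq 0 \Leftrightarrow z^k\notin\zer\p{F}\) for the nominal point \(\bar z^k=z^k-\gamma F\p{z^k}\): if \(z^k\in\zer\p{F}\) then \(F\p{z^k}=0\), hence \(\bar z^k=z^k\) and the difference vanishes (no injectivity needed here); conversely, if \(z^k\notin\zer\p{F}\) then \(F\p{z^k}\neq 0\), so \(\bar z^k\neq z^k\), and injectivity of \(F\) forces \(F\p{\bar z^k}\neq F\p{z^k}\).

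Assuming \(z^k\notin\zer\p{F}\), I would first note that, since \(\gamma\in(0,1/L_F)\), the constant
\[
    \delta_k:=\p{1-\gamma^2 L_F^2}\,\norm{F\p{z^k}-F\p{\bar z^k}}^2
\]
is strictly positive by the equivalence above. Next I would identify \(\bar z^k=z^k-\gamma F\p{z^k}\) and \(w^k=z^k-\gamma F\p{\bar z^k}\) as a single nominal extragradient step for the case \(g=0\), so that \eqref{eq:EG:new_Lyapunov_w.o._sol:g_zero} of \Cref{cor:EG:new_Lyapunov_w.o._sol:g_zero} applies and yields \(\norm{F\p{w^k}}^2\le\norm{F\p{z^k}}^2-\delta_k\).

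Finally I would let \(i\to\infty\). Since \(\beta\in(0,1)\), we have \(\beta^i\to 0\), hence \(z^{k+1}=\p{1-\beta^i}w^k+\beta^i\p{z^k+d^k}\to w^k\), and continuity of \(F\) (part of \Cref{ass:F}) gives \(\norm{F\p{z^{k+1}}}^2\to\norm{F\p{w^k}}^2\). Because \(\sigma\in(0,1)\) and \(\delta_k>0\),
\[
    \norm{F\p{w^k}}^2\le\norm{F\p{z^k}}^2-\delta_k<\norm{F\p{z^k}}^2-\sigma\delta_k,
\]
so for all sufficiently large \(i\) the acceptance test \eqref{eq:I-FLEX:descent} holds; thus the admissible index set is nonempty, a largest admissible \(\tau_k\) exists, and the line search is well-defined — and this conclusion does not depend on \(d^k\), which enters only through a term that vanishes as \(i\to\infty\).

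I do not expect a serious obstacle: the argument is a short continuity/limiting argument built on \Cref{cor:EG:new_Lyapunov_w.o._sol:g_zero}. The only points needing care are (i) invoking injectivity exactly where it is needed, namely to pass from \(z^k\notin\zer\p{F}\) to \(F\p{\bar z^k}\neq F\p{z^k}\) (the reverse implication requires no injectivity), and (ii) recognizing that \(w^k\) in \IFLEX\ coincides with the nominal next iterate appearing in \Cref{cor:EG:new_Lyapunov_w.o._sol:g_zero}, so that \eqref{eq:EG:new_Lyapunov_w.o._sol:g_zero} can be quoted verbatim with the iterate there replaced by \(w^k\).
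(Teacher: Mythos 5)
Your proposal is correct and is precisely the argument the paper compresses into its one-line proof: injectivity gives \(\norm{F(z^k)-F(\bar z^k)}\neq 0\) whenever \(z^k\notin\zer(F)\), the nominal step \(w^k\) satisfies \eqref{eq:EG:new_Lyapunov_w.o._sol:g_zero}, and since \(\sigma<1\) leaves strictly positive slack, continuity of \(F\) together with \(\beta^i\to 0\) forces \eqref{eq:I-FLEX:descent} to hold for all sufficiently large \(i\), so the backtracking terminates. No gaps; the two proofs are the same argument at different levels of detail.
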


\begin{theorem}\label{thm:IFLEX}
    Suppose that \Cref{ass:F} holds,  \(\zer \p{F}\neq \emptyset\), and the sequence \(\p{z^{k}}_{k\in\naturals}\) is generated by~\refIFLEX. 
    \begin{thmenumerate}

        \item \label{thm:I-FLEX:injective} If \(F\) is injective and weakly continuous, then each weak sequential cluster point of  \(\seq{z^k}\) is in \(\zer \p{F}\). 

        \item \label{thm:I-FLEX:dk_bounded} If \(F\) is injective and weakly continuous, and \(\seq{d^k}\in\ell^{1}(\naturals;\,\calH)\), then \(\seq{z^k}\) converges weakly to some point in \(\zer \p{F}\). 

        \item \label{thm:I-FLEX:F_unifrom} If \(F\) is uniformly monotone, then \(\seq{z^k}\) converges weakly to some point in \(\zer \p{F}\).
        
        \item \label{thm:I-FLEX:F_strongly} If there exists \(0<\mu_F\leq L_F\) such that \(\mu_F \norm{x - y}\leq\norm{F\p{x}- F\p{y}}\) for each \(x,y\in \calH\), then \(\seq{z^k}\) converges strongly to some point in \(\zer \p{F}\) and
        \begin{align}\label{eq:IFLEX:F_strongly:linear}
            \norm{F(z^{k+1})}^2 \leq 
            \smash{\underbracket{
            \p{1-\sigma\gamma^{2}\mu_{F}^{2}\p{1-\gamma^{2}L_{F}^{2}}} 
            }_{\in(0,1)}}
            \norm{F(z^{k})}^2
        \end{align}
        for each \(k\in\naturals\).
    \end{thmenumerate}
\end{theorem}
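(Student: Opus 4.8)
The plan is to derive all four items from the single descent inequality \eqref{eq:I-FLEX:descent}, which \refIFLEX{} enforces at every iteration. Telescoping \eqref{eq:I-FLEX:descent} shows that $(\norm{F(z^k)})_{k\in\naturals}$ is nonincreasing (hence convergent and bounded by $\norm{F(z^0)}$) and that $\sum_{k\in\naturals}\norm{F(z^k)-F(\bar z^k)}^2<+\infty$, so $\norm{F(z^k)-F(\bar z^k)}\to 0$. Throughout I may assume $z^k\notin\zer(F)$ for every $k$, since otherwise a solution is found in finitely many iterations; by \Cref{prop:IFLEX:well_defined} the line search is then well defined at each step.

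For \Cref{thm:I-FLEX:injective}, let $z^{k_j}\rightharpoonup\bar z$. Weak continuity of $F$ gives $F(z^{k_j})\rightharpoonup F(\bar z)$, hence $\bar z^{k_j}=z^{k_j}-\gamma F(z^{k_j})\rightharpoonup\bar z-\gamma F(\bar z)$ and, applying weak continuity again, $F(\bar z^{k_j})\rightharpoonup F\p{\bar z-\gamma F(\bar z)}$. Since $F(z^{k_j})-F(\bar z^{k_j})\to 0$ strongly, taking weak limits yields $F(\bar z)=F\p{\bar z-\gamma F(\bar z)}$, and injectivity forces $\bar z=\bar z-\gamma F(\bar z)$, i.e.\ $F(\bar z)=0$. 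For \Cref{thm:I-FLEX:dk_bounded} I complement this with a quasi-Fej\'er estimate: since $w^k=z^k-\gamma F(\bar z^k)$ is exactly one extragradient iterate from $z^k$, \eqref{eq:EG:Lyapunov_with_solution:g_zero_and_relaxed} gives $\norm{w^k-z^\star}\le\norm{z^k-z^\star}$ for every $z^\star\in\zer(F)$; feeding this into $z^{k+1}=(1-\tau_k)w^k+\tau_k(z^k+d^k)$ via convexity of $\norm{\cdot}^2$ and the elementary bound $2\norm{z^k-z^\star}\norm{d^k}\le\norm{d^k}\norm{z^k-z^\star}^2+\norm{d^k}$ yields
\[
\norm{z^{k+1}-z^\star}^2\le\bigl(1+\norm{d^k}\bigr)\norm{z^k-z^\star}^2+\norm{d^k}+\norm{d^k}^2,
\]
a quasi-Fej\'er recursion with $\ell^1$ parameters because $(d^k)\in\ell^1(\naturals;\calH)\subseteq\ell^2(\naturals;\calH)$. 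Hence $(\norm{z^k-z^\star})$ converges for every $z^\star\in\zer(F)$ and $(z^k)$ is bounded; combining boundedness with \Cref{thm:I-FLEX:injective} and a standard quasi-Fej\'er argument (see, e.g., \cite{bauschke2017convexanalysismonotone}) gives weak convergence of $(z^k)$ to a point of $\zer(F)$.

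For \Cref{thm:I-FLEX:F_unifrom}, uniform monotonicity makes $\zer(F)=\{z^\star\}$ a singleton. Applying uniform monotonicity to the pair $(z^k,\bar z^k)$ together with $z^k-\bar z^k=\gamma F(z^k)$ and Cauchy--Schwarz gives $\phi\p{\gamma\norm{F(z^k)}}\le\norm{F(z^k)-F(\bar z^k)}\,\gamma\norm{F(z^k)}$; since $\gamma\norm{F(z^k)}\le\gamma\norm{F(z^0)}$ is bounded and $\norm{F(z^k)-F(\bar z^k)}\to 0$, a short contradiction argument (if $\gamma\norm{F(z^k)}\ge\delta$ along a subsequence, then $\phi(\delta)\le\gamma\norm{F(z^0)}\norm{F(z^k)-F(\bar z^k)}\to 0$) forces $\norm{F(z^k)}\to 0$. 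Then $\phi\p{\norm{z^k-z^\star}}\le\inner{F(z^k)}{z^k-z^\star}\le\norm{F(z^k)}\norm{z^k-z^\star}$, so once $(z^k)$ is known to be bounded one obtains $\phi\p{\norm{z^k-z^\star}}\to 0$ and hence $\norm{z^k-z^\star}\to 0$, i.e.\ strong (a fortiori weak) convergence to $z^\star$. For \Cref{thm:I-FLEX:F_strongly}, the inverse-Lipschitz bound gives $\norm{F(z^k)-F(\bar z^k)}\ge\mu_F\norm{z^k-\bar z^k}=\mu_F\gamma\norm{F(z^k)}$; substituting into \eqref{eq:I-FLEX:descent} produces \eqref{eq:IFLEX:F_strongly:linear}, whose contraction factor lies in $(0,1)$ since $\sigma\in(0,1)$, $\gamma\mu_F\le\gamma L_F<1$ and $0<1-\gamma^2L_F^2<1$; thus $\norm{F(z^k)}\to 0$ linearly, and $\mu_F\norm{z^k-z^\star}\le\norm{F(z^k)-F(z^\star)}=\norm{F(z^k)}$ yields strong, linearly convergent convergence to $z^\star$.

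The most delicate item is \Cref{thm:I-FLEX:F_unifrom}: without a summability assumption on $(d^k)$ or an inverse-Lipschitz lower bound there is no quasi-Fej\'er structure, so everything must be extracted from \eqref{eq:I-FLEX:descent}. Two points need care. First, upgrading $\norm{F(z^k)-F(\bar z^k)}\to 0$ to $\norm{F(z^k)}\to 0$ essentially uses the monotone boundedness of $\norm{F(z^k)}$ along the whole sequence. Second, boundedness of $(z^k)$ rests on the observation that a monotone $L_F$-Lipschitz operator which is uniformly monotone necessarily admits a modulus $\phi$ with $\phi(t)/t\to+\infty$ as $t\to+\infty$ (obtained by chaining the uniform-monotonicity inequality along unit-length sub-segments of the segment $[z^k,z^\star]$), so that the uniform bound $\phi\p{\norm{z^k-z^\star}}/\norm{z^k-z^\star}\le\norm{F(z^k)}\le\norm{F(z^0)}$ confines $(z^k)$ to a bounded set. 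Items (ii) and (iv) are comparatively routine given their extra hypotheses.
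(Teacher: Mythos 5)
Your proof is correct and follows essentially the same route as the paper: item (i) is the identical weak-limit argument, item (iv) is the identical substitution of the inverse-Lipschitz bound into \eqref{eq:I-FLEX:descent}, and item (ii) differs only in that you derive a multiplicative quasi-Fej\'er recursion $\norm{z^{k+1}-z^\star}^2\le(1+\norm{d^k})\norm{z^k-z^\star}^2+\norm{d^k}+\norm{d^k}^2$ directly, whereas the paper (\Cref{lem:zk_quasi_fejer}) first bounds $\norm{z^k-z^\star}$ by $\norm{z^0-z^\star}+\sum_i\norm{d^i}$ and then obtains an additive perturbation $\varepsilon_k$; both forms are handled by the same quasi-Fej\'er lemma and Opial-type argument. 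The one genuine difference is item (iii): the paper's proof simply defers to \Cref{lem:(Prox)(I)FLEX:uniformly}, which only establishes $\norm{F(z^k)}\to 0$, leaving implicit the passage to convergence of $(z^k)$ itself. You supply that missing step — boundedness of $(z^k)$ via the chaining of the uniform-monotonicity inequality along sub-segments of $[z^k,z^\star]$ (which yields $\inner{F(x)-F(y)}{x-y}\ge n^2\phi(\norm{x-y}/n)$ for every $n$, hence an effective modulus growing superlinearly), followed by $\phi(\norm{z^k-z^\star})\le\norm{F(z^k)}\,\norm{z^k-z^\star}\to 0$ — and in fact obtain strong convergence, which is stronger than the stated weak convergence. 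This is a correct and worthwhile tightening of the argument rather than a deviation from it.
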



\subsection{Proximal fast line-search extragradient}
\label{sec:ProxFLEX}
A direct generalization of~\refFLEX{} in \Cref{sec:FLEX} is provided in~\refProxFLEX{} for the case when \( g \) in~\eqref{eq:the_inclusion_problem} is nonzero. Here, the Lyapunov inequality~\eqref{eq:EG:new_Lyapunov_w.o._sol} from \Cref{thm:EG:new_Lyapunov_w.o._sol} is used to modify the standard extragradient method in~\eqref{alg:EG}; otherwise, the underlying approach remains the same. However, there is one important difference between~\FLEX{} and~\ProxFLEX{} in terms of computations required per line-search trial. The condition~\eqref{eq:FLEX:descent} in~\FLEX{} requires only one additional \(F\) evaluation per trial while condition~\eqref{eq:Prox-FLEX:descent} in~\ProxFLEX{} requires two additional \(F\) evaluations and two additional \(\prox_{\gamma g}\) evaluations per trial. Next, we present a convergence result of~\ProxFLEX.

\begin{theorem}\label{thm:ProxFLEX}
    Suppose that \Cref{ass:main} holds, \(\zer \p{F + \partial g}\neq \emptyset\), the sequence \(\p{z^{k}}_{k\in\naturals}\) is generated by~\refProxFLEX, and \(\seq{d^k}\in\ell^{1}(\naturals;\,\calH)\). Then \(\seq{z^k}\) converges weakly to some point in \(\zer \p{F+ \partial g}\).
\end{theorem}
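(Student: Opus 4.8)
The plan is to adapt the summable-directions analysis behind \Cref{thm:FLEX:dk_summable} to the general problem~\eqref{eq:the_inclusion_problem}, using the Lyapunov function \(\V\) in place of \(\norm{F(\cdot)}^{2}\) and the Fej\'er-type inequality of \Cref{thm:EG:primal_descent_for_V} wherever the plain-extragradient Fej\'er inequality was used. Write \(\bar z^{k}=T_{1}^{\gamma}(z^{k})\), \(\hat z^{k}=T_{2}^{\gamma}(z^{k})\) and \(\V_{k}=\V\p{z^{k},\bar z^{k},\hat z^{k}}\), so that \Cref{prop:V_performance_measure:lower_bound} gives \(\V_{k}\ge\p{1-\gamma L_{F}}\gamma^{-2}\p{\norm{\hat z^{k}-\bar z^{k}}^{2}+\norm{\hat z^{k}-z^{k}}^{2}}\ge 0\). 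The first step is bookkeeping: the nominal choice \(z^{k+1}=\hat z^{k}\) already passes the line-search descent test of \ProxFLEX{} — this is precisely \Cref{thm:EG:new_Lyapunov_w.o._sol} together with \(\sigma\in(0,1)\) — so the backtracking is well defined and stops with some \(\tau_{k}\) in its finite grid; combining this with the direction-acceptance test, which forces \(\V_{k+1}\le\rho'\V_{k}\) for a fixed \(\rho'\in(0,1)\), shows that \(\seq{\V_{k}}\) is nonincreasing, hence convergent.

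Next I would prove quasi-Fej\'er monotonicity with respect to \(\zer\p{F+\partial g}\). Fix \(z^{\star}\in\zer\p{F+\partial g}\). On a direction step \(z^{k+1}=z^{k}+d^{k}\), so \(\norm{z^{k+1}-z^{\star}}\le\norm{z^{k}-z^{\star}}+\norm{d^{k}}\) holds trivially; on a line-search step \(z^{k+1}\) is a convex combination of \(\hat z^{k}\) and \(z^{k}+d^{k}\), and combining convexity of \(\norm{\cdot-z^{\star}}^{2}\), the bound \(\norm{\hat z^{k}-z^{\star}}^{2}\le\norm{z^{k}-z^{\star}}^{2}-\alpha\p{\gamma,L_{F}}\V_{k}\le\norm{z^{k}-z^{\star}}^{2}\) from \Cref{thm:EG:primal_descent_for_V}, and the triangle inequality for \(\norm{z^{k}+d^{k}-z^{\star}}\), gives the same estimate. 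Since \(\seq{d^{k}}\in\ell^{1}(\naturals;\calH)\), a standard quasi-Fej\'er argument \cite[Lemma~5.31]{bauschke2017convexanalysismonotone} yields that \(\seq{z^{k}}\) is bounded and that \(\p{\norm{z^{k}-z^{\star}}}_{k\in\naturals}\) converges for every \(z^{\star}\in\zer\p{F+\partial g}\).

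The third step, which I expect to be the main obstacle, is to upgrade this to \(\V_{k}\to 0\). Using the same convex-combination bound but now \emph{retaining} the residual from \Cref{thm:EG:primal_descent_for_V}, one gets, on a line-search step,
\begin{equation*}
    \norm{z^{k+1}-z^{\star}}^{2}\le\norm{z^{k}-z^{\star}}^{2}-\p{1-\tau_{k}}\alpha\p{\gamma,L_{F}}\V_{k}+2\norm{d^{k}}\norm{z^{k}-z^{\star}}+\norm{d^{k}}^{2} ,
\end{equation*}
and the crucial observation is that the grid \(\set{0}\cup\set{\beta^{i}\mid i\in\llbracket 1,M\rrbracket}\) keeps \(\tau_{k}\le\beta<1\), so \(1-\tau_{k}\ge 1-\beta>0\) and \(\alpha\p{\gamma,L_{F}}>0\) (as \(\gamma<1/L_{F}\)), whence the residual genuinely controls \(\V_{k}\). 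Telescoping this together with the trivial inequality on direction steps, and using boundedness of \(\seq{z^{k}}\) and summability of \(\seq{d^{k}}\), gives \(\sum_{k}\V_{k}<\infty\) over all line-search iterations; if those are infinitely many, nonincreasingness of \(\seq{\V_{k}}\) forces \(\V_{k}\to 0\), and if they are finitely many, all late iterations are direction steps with \(\V_{k+1}\le\rho'\V_{k}\), which again gives \(\V_{k}\to 0\). Note that telescoping \Cref{thm:EG:new_Lyapunov_w.o._sol} by itself would only bound \(\sum_{k}\gamma^{-2}\norm{\hat z^{k}-\bar z^{k}}^{2}\), an \(\norm{F(z^{k})-F(\bar z^{k})}^{2}\)-type quantity that, unlike \(\V_{k}\), need not vanish away from solutions; this is exactly why the line search in \ProxFLEX{} must be organized around the Fej\'er inequality of \Cref{thm:EG:primal_descent_for_V} rather than the descent inequality of \Cref{thm:EG:new_Lyapunov_w.o._sol}, and why \(\tau_{k}\) must be kept bounded away from \(1\).

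Finally, I would close with a demiclosedness/Opial argument. From \(\V_{k}\to 0\) and \Cref{prop:V_performance_measure:lower_bound} we get \(\norm{\hat z^{k}-z^{k}}\to 0\) and \(\norm{\hat z^{k}-\bar z^{k}}\to 0\), hence \(\norm{z^{k}-\bar z^{k}}\to 0\). The subgradient characterization of the first proximal step gives \(v^{k}:=\gamma^{-1}\p{z^{k}-\bar z^{k}}+F(\bar z^{k})-F(z^{k})\in\p{F+\partial g}(\bar z^{k})\) with \(\norm{v^{k}}\le\p{\gamma^{-1}+L_{F}}\norm{z^{k}-\bar z^{k}}\to 0\). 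Since \(F+\partial g\) is maximally monotone (\(F\) is monotone, Lipschitz and everywhere defined, while \(\partial g\) is maximally monotone; see \cite[Corollary~20.28 and Corollary~25.5]{bauschke2017convexanalysismonotone}), its graph is sequentially closed in the weak--strong topology, so along any subsequence with \(z^{k_{j}}\rightharpoonup\bar z\) one has \(\bar z^{k_{j}}\rightharpoonup\bar z\) (because \(z^{k}-\bar z^{k}\to 0\)) and therefore \(0\in\p{F+\partial g}(\bar z)\), i.e. \(\bar z\in\zer\p{F+\partial g}\). Together with the convergence of \(\p{\norm{z^{k}-z^{\star}}}_{k\in\naturals}\) for each \(z^{\star}\in\zer\p{F+\partial g}\) from the second step, Opial's lemma \cite[Lemma~2.47]{bauschke2017convexanalysismonotone} yields weak convergence of \(\seq{z^{k}}\) to a point of \(\zer\p{F+\partial g}\).
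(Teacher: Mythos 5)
Your proposal is correct and follows essentially the same route as the paper's proof: quasi-Fej\'er monotonicity via the residual inequality of \Cref{thm:EG:primal_descent_for_V} together with summability of \(\seq{d^k}\), then forcing \(\V_k\to 0\) by combining the factor \(1-\tau_k\ge 1-\beta>0\) on line-search steps with the geometric contraction \(\V_{k+1}\le\rho^2\V_k\) on accepted direction steps, and finally demiclosedness of \(F+\partial g\) with Opial's lemma. The only (cosmetic) difference is in how \(\sum_k\V_k<\infty\) is extracted: you telescope over all iterations and invoke monotonicity of \(\seq{\V_k}\), whereas the paper telescopes between consecutive line-search indices and sums the geometric tails, but both arguments are valid and equivalent in substance.
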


\begin{algorithm}[t]
    \caption{\ProxFLEX{} (Proximal-\texttt{FLEX})} 
    \label{alg:Prox-FLEX}
    \begin{algorithmic}[1]
        \INITIALIZE  \(z^{0}\in\calH\), \(\gamma\in\p{0,1/L_{F}}\), \(\p{\rho,\sigma,\beta} \in \p{0,1}^{3}\), \(M\in\mathbb{N}_{0}\)
        \REQUIRE Lyapunov function \(\V\) as in~\eqref{eq:V} and algorithmic operators \(T_1^{\gamma}, T_2^{\gamma}\) as in~\eqref{eq:T1T2}
        \For{\(k = 0, 1, 2, \ldots\)}
            \State\label{State:Prox-FLEX:dk}Compute a direction \(d^{k}\in\calH\) at \(z^{k}\)
            \State \(\bar{z}^{k} = T_1^\gamma(z^k)\) \label{State:Prox-FLEX:EG:1} \hfill {\footnotesize\gray  \( = \prox_{\gamma g} \big(z^{k} - \gamma F\p{z^{k}}\big) \)}
            \State \(w^{k} = T_2^\gamma(z^k)\)
            \hfill {\footnotesize\gray  \(= \prox_{\gamma g} \big(z^{k} - \gamma F\p{\bar{z}^{k}}\big)\)}
            \If{\(\mathcal{V}(z^{k} + d^{k}, T_1^\gamma(z^{k} + d^{k}), T_2^\gamma(z^{k} + d^{k})) \leq \rho^2 \mathcal{V}(z^k, \bar z^k, w^{k}) \)} \label{State:Prox-FLEX:contraction}
                \State \label{State:Prox-FLEX:full} \(z^{k+1} = z^{k} + d^{k}\)
            \Else
                \State \label{State:Prox-FLEX:LS} 
                \begin{tabular}[t]{@{}r@{}l@{}}
                Set \(z^{k+1} = \p{1-\tau_{k}} w^{k} + \tau_{k} (z^{k} + d^{k})\) where \(\tau_{k}\) is the largest number in 
                \\
                \(\set{ \beta^{i} \mid i \in \llbracket 1 , M \rrbracket}\cup\set{0}\) such that \hfill\hfill
                \end{tabular}
                \begin{align}\label{eq:Prox-FLEX:descent}
                     ~~~~~~~~~\mathcal{V}(z^{k+1}, T_1^\gamma(z^{k+1}), T_2^\gamma(z^{k+1})) \leq \mathcal{V}(z^k, \bar z^k, w^{k}) - \sigma\p{1-\gamma^{2}L_{F}^{2}}\gamma^{-2} \norm{w^{k}-\bar{z}^{k}}^{2}
                \end{align}
            \EndIf
        \EndFor
    \end{algorithmic}
\end{algorithm}

\begin{remark}
    \begin{enumerate}
        \item \label{rem:strength:proxflex} If \(F\) is \(\mu_{F}\)-strongly monotone, then the Lyapunov inequality~\eqref{eq:EG:new_Lyapunov_w.o._sol} in \Cref{thm:EG:new_Lyapunov_w.o._sol} can be strengthened to include the additional term \(-2\gamma^{-1}\mu_{F}\norm{z^{k+1} - z^{k} }^2\) in the right-hand side; this follows from using strong monotonicity of \(F\) instead of monotonicity of \(F\) in the first inequality in~\eqref{eq:innerprod:vk+:ub}. This observation suggests that the line-search condition~\eqref{eq:Prox-FLEX:descent} in~\ProxFLEX{} can be replaced by
        \begin{align}\notag
            \V(z^{k+1}, \bar{z}^{k+1}, w^{k+1}) 
            \leq{}& \V(z^k, \bar z^k, w^{k}) - \sigma\p{1-\gamma^{2}L_{F}^{2}}\gamma^{-2} \norm{w^{k}-\bar{z}^{k}}^{2} \\\label{eq:Prox-FLEX:descent:strongly}
            &{}-2\gamma^{-1}\sigma\mu_{F}\norm{w^{k} - z^{k} }^2.
        \end{align}
        Note that using Young's inequality, we get 
        \begin{align}\label{eq:V:upper_bound}
            \V(z^k, \bar z^k, w^{k}) \leq \p{2\gamma^{-1} + \gamma^{-1} L_{F} + \gamma^{-2}}\norm{w^{k} - z^{k}}^{2} + \p{\gamma^{-1} L_{F} + \gamma^{-2}}\norm{w^{k} - \bar{z}^{k}}^{2}.
        \end{align}
        Combining~\eqref{eq:Prox-FLEX:descent:strongly},~\eqref{eq:V:upper_bound} and \Cref{State:Prox-FLEX:contraction} in~\ProxFLEX{} gives 
        \begin{align*}
            \V(z^{k+1}, \bar{z}^{k+1}, w^{k+1}) \leq{}& 
            \underbracket{
             \max\left(\rho^2 ,1 - \frac{\sigma \min\p{(1-\gamma^{2}L_{F}^{2}) , 2\gamma\mu_{F}}}{2\gamma + \gamma L_{F} + 1} \right)
             }_{\in(0,1)}
             \V(z^k, \bar z^k, w^{k})
        \end{align*}
        for each \(k\in\naturals\), i.e. \(\seq{\V(z^k, \bar z^k, w^{k})}\) converges at least \(Q\)-linearly to zero. However, the resulting line-search condition is not always actionable since \(\mu_{F}\) may not be known in many practical problems. Therefore, we have chosen not to consider the strongly monotone case further. 
        
        \item Similar to~\IFLEX{},~\ProxFLEX{} can be modified to perform infinite backtracking on~\eqref{eq:Prox-FLEX:descent} with guaranteed finite termination, even without the strengthened line-search condition described above in \Cref{rem:strength:proxflex}. 
        This modification requires \(\norm{w^k-\bar{z}^k}\) to be an optimality measure, which holds when both \(F\) and \(\prox_{\gamma g}\) are injective. However, since \(\prox_{\gamma g}\) is rarely injective in practical applications, we omit this modification from our analysis.
    \end{enumerate}
\end{remark}


\section{Global convergence}\label{sec:global}
This section provides detailed proofs of the results presented in \Cref{sec:algs}. We start by providing two useful lemmas. The first lemma establishes that the iterates generated by~\FLEX,~\IFLEX, and~\ProxFLEX{} are quasi-Fej\'er monotone with respect to the solution set, which is an important tool in establishing global convergence. The second lemma contains some auxiliary results. 

\begin{lemma}\label{lem:zk_quasi_fejer}
    Suppose that \Cref{ass:main} holds, \(z^{\star}\in\zer \p{F + \partial g}\), \(\seq{d^k}\in\ell^{1}(\naturals;\,\calH)\), \(T_1^{\gamma}\) and \(T_2^{\gamma}\) are the algorithmic operators defined in~\eqref{eq:T1T2}, and \(\V\) is the Lyapunov function given in~\eqref{eq:V}. Let \(\p{z^{k}}_{k\in\naturals}\in\calH^{\naturals}\) such that \(z^{k+1} = (1-\tau_k)w^{k} + \tau_{k}(z^{k} + d^{k})\), where \(\tau_k \in [0,1]\) and \(w^{k} = T_2^\gamma(z^k)\) for each \(k\in\naturals\). Then there exists a sequence \(\seq{\varepsilon_k}\in\ell^{1}\p{\naturals;\,\reals_{+}}\) such that
    \begin{align}\label{eq:zk_quasi_fejer}
        \norm{z^{k+1}-z^\star}^2 \leq \norm{z^k-z^\star}^2 + \varepsilon_k - (1- \tau_k)\alpha\p{\gamma, L_F}\V_k,
    \end{align}
    where \(\alpha\p{\gamma, L_F}\) is defined in~\eqref{eq:EG:primal_descent_for_V:alpha}, \(\V_{k} = \V(z^k, \bar z^k, w^{k})\), and \(\bar z^k = T_1^{\gamma}(z^k)\) for each \(k\in\naturals\).
    \begin{proof}
        Note that \Cref{prop:V_performance_measure:lower_bound} gives that \(\V_{k}\geq 0\) for each \(k\in\naturals\). Using the identity 
        \begin{align*}
            \norm{\tau x+\p{1-\tau}y}^2 = \tau\norm{x}^2 + \p{1-\tau}\norm{y}^2 - \tau\p{1-\tau}\norm{x-y}^2
        \end{align*}
        for each \(x,y\in\calH\) and \(\tau\in\reals\) {\cite[Corollary 2.15]{bauschke2017convexanalysismonotone}}, and \(z^{k+1} - z^{\star} = \tau_{k} (z^{k} + d^{k} - z^{\star}) + (1-\tau_{k}) (w^{k} - z^{\star}) \) for each \(k\in\naturals\), we get that
        {\begin{allowdisplaybreaks}
        \begin{align} \nonumber
            & \norm{z^{k+1} - z^\star}^2 \\ \nonumber
            & \quad =
            \tau_k\norm{z^{k}+ d^k - z^\star}^2 
            + (1- \tau_k)\norm{w^k-z^\star}^2 
            - \tau_k(1- \tau_k)\norm{z^{k} + d^{k} - w^k}^2 \\ \nonumber
            & \quad \leq 
            \tau_k\norm{z^{k} + d^k- z^\star}^2 
            + (1- \tau_k)\norm{z^k-z^\star}^2  
            - (1- \tau_k)\alpha\p{\gamma, L_F} \V_k \\ \nonumber
            & \quad \leq 
            \tau_k\p{\norm{z^{k}- z^\star}+\norm{d^k}}^2 + (1- \tau_k)\norm{z^k-z^\star}^2  - (1- \tau_k)\alpha\p{\gamma, L_F} \V_k  \\ \label{eq:zk_quasi_fejer_}
            & \quad \leq
            \norm{z^k-z^\star}^2 
            + 2\norm{z^{k}- z^\star}\norm{d^k}
            + \norm{d^k}^2 - (1- \tau_k)\alpha\p{\gamma, L_F} \V_k \\ \label{eq:zk_bounded_}
            & \quad  \leq \p{\norm{z^k-z^\star} + \norm{d^k}}^2
        \end{align}
        \end{allowdisplaybreaks}}%
        for each \(k\in\naturals\), where~\eqref{eq:EG:primal_descent_for_V}  and \(\tau_k(1-\tau_k)\geq 0\) is used in the first inequality, the triangle inequality is used in the second inequality, \(\tau_{k} \leq 1\) is used in the third inequality, and \( (1- \tau_k)\alpha\p{\gamma, L_F} \geq 0\) is used in the last inequality. Taking the square root of~\eqref{eq:zk_bounded_} and inductively applying the resulting inequality gives that 
        \begin{align}\label{eq:zk_bounded}
            \norm{z^k-z^\star} \leq \norm{z^0-z^\star} + \textstyle\sum_{i=0}^{k-1} \norm{d^{i}} \leq \underbracket{\norm{z^0-z^\star} + \textstyle\sum_{i=0}^{\infty} \norm{d^{i}}}_{=E} < \infty
        \end{align}
        for each \(k\in\naturals\), where the empty sum is interpreted as zero and \(E\) is finite since \(\seq{d^k}\in\ell^{1}(\naturals;\,\calH)\). Therefore,~\eqref{eq:zk_quasi_fejer_} and~\eqref{eq:zk_bounded} imply that 
        \begin{align*}
            \norm{z^{k+1}-z^\star}^2 {}\leq{}& \norm{z^k-z^\star}^2  + \underbracket{2E \norm{d^k} +\norm{d^k}^2}_{=\varepsilon_k} - (1- \tau_k)\alpha\p{\gamma, L_F} \V_k
        \end{align*}
        for each \(k\in\naturals\), where summability of \(\seq{\varepsilon_k}\) follows from \(\seq{d^k}\in\ell^{1}(\naturals;\,\calH)\). 
    \end{proof}
\end{lemma}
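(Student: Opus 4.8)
The plan is a two-pass argument: first extract a crude recursion that already forces $\seq{z^k}$ to be bounded, then feed that bound back in to manufacture the summable perturbation $\seq{\varepsilon_k}$. I would open by recording that $\V_k = \V(z^k,\bar z^k, w^k) \geq 0$ for every $k$, which is exactly \Cref{prop:V_performance_measure:lower_bound} applied to the triple $(z^k, \bar z^k, w^k)$ with $\bar z^k = T_1^\gamma(z^k)$ and $w^k = T_2^\gamma(z^k)$; this is the same configuration that \Cref{thm:EG:primal_descent_for_V} requires, so both results are available at the point $z^k$.

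Next I would expand $\norm{z^{k+1}-z^\star}^2$ via the convex-combination identity $\norm{\tau x + (1-\tau) y}^2 = \tau\norm{x}^2 + (1-\tau)\norm{y}^2 - \tau(1-\tau)\norm{x-y}^2$, taking $\tau = \tau_k$, $x = z^k + d^k - z^\star$, $y = w^k - z^\star$, so that $\tau_k x + (1-\tau_k) y = z^{k+1} - z^\star$. Since $\tau_k \in [0,1]$, the term $-\tau_k(1-\tau_k)\norm{z^k + d^k - w^k}^2$ is nonpositive and is discarded. I would then bound $\norm{w^k - z^\star}^2 \leq \norm{z^k - z^\star}^2 - \alpha(\gamma, L_F)\V_k$ using \Cref{thm:EG:primal_descent_for_V} (with the role of ``$z^{k+1}$'' there played by $w^k$), and $\norm{z^k + d^k - z^\star}^2$ by $(\norm{z^k - z^\star} + \norm{d^k})^2$ via the triangle inequality. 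Absorbing the factor $\tau_k \leq 1$ on the $d^k$-terms and, in this first pass, also dropping $(1-\tau_k)\alpha(\gamma,L_F)\V_k \geq 0$, yields the crude estimate $\norm{z^{k+1}-z^\star} \leq \norm{z^k - z^\star} + \norm{d^k}$; iterating it and invoking $\seq{d^k} \in \ell^{1}(\naturals;\calH)$ gives $\norm{z^k - z^\star} \leq E := \norm{z^0 - z^\star} + \sum_{i\geq 0}\norm{d^i} < \infty$ for all $k$.

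Finally, I would return to the intermediate inequality from before the $\V_k$ term was thrown away, namely $\norm{z^{k+1}-z^\star}^2 \leq \norm{z^k-z^\star}^2 + 2\norm{z^k-z^\star}\norm{d^k} + \norm{d^k}^2 - (1-\tau_k)\alpha(\gamma,L_F)\V_k$, and insert the uniform bound $\norm{z^k-z^\star} \leq E$ into the middle term. Setting $\varepsilon_k := 2E\norm{d^k} + \norm{d^k}^2$ produces precisely \eqref{eq:zk_quasi_fejer}, and $\seq{\varepsilon_k} \in \ell^{1}(\naturals;\reals_{+})$ because each $\varepsilon_k \geq 0$, $\sum_k \norm{d^k} < \infty$, and $\sum_k \norm{d^k}^2 < \infty$ (as $\ell^1 \subseteq \ell^2$, or simply because $\norm{d^k}\to 0$ so $\norm{d^k}^2 \leq \norm{d^k}$ eventually).

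The one subtlety, and the step needing the most care, is the ordering: $\varepsilon_k$ cannot be declared summable until $\seq{z^k}$ is known to be bounded, yet that boundedness is itself obtained from a weakened form of the very inequality being proved. The two-pass structure — crude estimate first, refinement second — untangles this, and it crucially relies on $\V_k \geq 0$ and $\alpha(\gamma,L_F) \geq 0$ (the latter being exactly where $\gamma \leq 1/L_F$ is used) to legitimately discard the Lyapunov term in the first pass.
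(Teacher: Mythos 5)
Your proposal is correct and follows essentially the same route as the paper's proof: the same convex-combination identity, the same application of \Cref{thm:EG:primal_descent_for_V} to bound \(\norm{w^k-z^\star}^2\), and the same two-pass structure in which the Lyapunov term is first discarded to obtain the crude bound \(\norm{z^k-z^\star}\leq E\) before returning to the intermediate inequality to define \(\varepsilon_k = 2E\norm{d^k}+\norm{d^k}^2\). The ordering subtlety you flag is handled identically in the paper.
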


\begin{lemma}\label{lem:(Prox)(I)FLEX}
    Suppose that \Cref{ass:main} holds, the sequences \(\p{z^{k}}_{k\in\naturals}\), \(\p{\bar{z}^{k}}_{k\in\naturals}\) and \(\p{w^{k}}_{k\in\naturals}\) are generated by either~\refFLEX{},~\refIFLEX, or~\refProxFLEX, and \(\V\) is the Lyapunov function given in~\eqref{eq:V}. Then, the following hold.
    \begin{enumeratlem}
    
        \item \label{lem:(Prox)(I)FLEX:norm_F_or_V_convergent} 
        \(\seq{\V\p{z^{k},\bar{z}^{k},w^{k}}}\) is convergent, which for~\FLEX{} and~\IFLEX{} reduces to \(\seq{F\p{z^{k}}}\) being convergent.
        
        \item \label{lem:(Prox)(I)FLEX:residual_summable} \(\seq{\norm{w^{k}-\bar{z}^{k}}^{2}}\in\ell^{1}\p{\naturals;\,\reals_{+}}\), which for~\FLEX{} and~\IFLEX{} can be written as \(\seq{\norm{F\p{z^{k}}-F\p{\bar{z}^{k}}}^{2}}\in\ell^{1}\p{\naturals;\,\reals_{+}}\).

        \item \label{lem:(Prox)(I)FLEX:uniformly} If \(F\) is uniformly monotone, then \(\seq{\norm{F( z^{k})}}\) converges to zero for~\FLEX{} and~\IFLEX.
    \end{enumeratlem}
    \begin{proof}

        First, we establish for~\ProxFLEX{} that 
        \begin{align}\notag
            \V\p{z^{k+1},\bar{z}^{k+1},w^{k+1}} \leq{}& \V\p{z^{k},\bar{z}^{k},w^{k}} \\ \label{eq:ProxFLEX:has_dual_descent}
            &{}- \min(\p{1- \gamma \LF}\p{1-\rho^{2}},\sigma\p{1-\gamma^{2}L_{F}^{2}}) \gamma^{-2}\norm{w^{k}-\bar{z}^{k}}^{2} 
        \end{align}
        for each \(k\in\naturals\). Note that \Cref{State:Prox-FLEX:contraction} in~\ProxFLEX{} implies that
        {\begin{allowdisplaybreaks}
        \begin{align*}
            \V\p{z^{k+1},\bar{z}^{k+1},w^{k+1}} 
            \leq{} & \rho^{2}\V\p{z^{k},\bar{z}^{k},w^{k}} \\
            ={} & \V\p{z^{k},\bar{z}^{k},w^{k}} - \p{1-\rho^{2}}\V\p{z^{k},\bar{z}^{k},w^{k}} \\
            \leq{} & \V\p{z^{k},\bar{z}^{k},w^{k}} - \p{1- \gamma \LF}\p{1-\rho^{2}}\gamma^{-2}\norm{w^{k}-\bar{z}^{k}}^{2} 
        \end{align*}
        \end{allowdisplaybreaks}}%
        for each iteration \(k\) when the condition in \Cref{State:Prox-FLEX:contraction} of~\ProxFLEX{} is true, where \Cref{prop:V_performance_measure:lower_bound} is used in the last inequality. This combined with~\eqref{eq:Prox-FLEX:descent} in~\ProxFLEX{} gives~\eqref{eq:ProxFLEX:has_dual_descent}.

        Second, since~\ProxFLEX{} reduced to~\FLEX{} when \(g=0\),~\eqref{eq:ProxFLEX:has_dual_descent} implies that
        \begin{align}\label{eq:FLEX:has_dual_descent}
            \norm{F\p{z^{k+1}}}^{2} \leq{}& \norm{F\p{z^{k}}}^{2} - \min(\p{1- \gamma \LF}\p{1-\rho^{2}},\sigma\p{1-\gamma^{2}L_{F}^{2}}) \norm{F\p{z^{k}}-F\p{\bar{z}^{k}}}^{2}
        \end{align}
        for each \(k\in\naturals\), for~\FLEX.
        
        \begin{itemize}

            \item[\ref{lem:(Prox)(I)FLEX:norm_F_or_V_convergent}:] Follows from~\eqref{eq:FLEX:has_dual_descent} for~\FLEX{},~\eqref{eq:I-FLEX:descent} for~\IFLEX, and~\eqref{eq:ProxFLEX:has_dual_descent} for~\ProxFLEX, combined with the monotone convergence theorem.
            
            \item[\ref{lem:(Prox)(I)FLEX:residual_summable}:] Note that \(\seq{\norm{w^{k}-\bar{z}^{k}}^{2}} = \seq{ \gamma^{2} \norm{F\p{z^{k}}-F\p{\bar{z}^{k}}}^{2} } \) for~\FLEX{} and~\IFLEX. The statement follows from~\eqref{eq:FLEX:has_dual_descent} for~\FLEX{},~\eqref{eq:I-FLEX:descent} for~\IFLEX, and~\eqref{eq:ProxFLEX:has_dual_descent} for~\ProxFLEX, combined with a telescoping summation argument. 

            \item[\ref{lem:(Prox)(I)FLEX:uniformly}:] Suppose that \(F\) is uniformly monotone, i.e., there exists an increasing function \(\phi : \reals_{+} \to \reals_{+}\), that vanishes only at \(0\),  such that 
            \begin{align*}
                \phi\left(\norm{x-y}\right) \leq \langle x - y , F(x) - F(y) \rangle
            \end{align*}
            for each \(x,y\in \calH\). Note that
            {\begin{allowdisplaybreaks}
            \begin{align*}
                \phi\left(\gamma\norm{F\p{z^{k}}}\right) 
                    &= \phi\left(\norm{z^{k} - \bar{z}^{k}}\right) \\
                    &\leq \langle z^{k} - \bar{z}^{k} , F\p{z^{k}} - F\p{\bar{z}^{k}} \rangle \\
                    &\leq \norm{ z^{k} - \bar{z}^{k}} \norm{ F\p{z^{k}} - F\p{\bar{z}^{k}} } \\
                    &= \gamma \norm{ F\p{z^{k}} } \norm{ F\p{z^{k}} - F\p{\bar{z}^{k}} } \xrightarrow[k\rightarrow \infty]{} 0, 
            \end{align*}
            \end{allowdisplaybreaks}}%
            where \(\bar{z}^{k} = z^{k} - \gamma F(z^{k})\) is used in the first equality, the Cauchy--Schwarz inequality is used in the second inequality, \(\bar{z}^{k} = z^{k} - \gamma F(z^{k})\) is used in the last equality, and the convergence to zero in the last line follows from \Cref{lem:(Prox)(I)FLEX:norm_F_or_V_convergent} and \Cref{lem:(Prox)(I)FLEX:residual_summable}. This proves the claim. \qedhere
        \end{itemize}
    \end{proof}
\end{lemma}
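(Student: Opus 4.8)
The plan is to reduce all three items to a single \emph{residual-type descent inequality}
\(\V(z^{k+1},\bar z^{k+1},w^{k+1}) \le \V(z^{k},\bar z^{k},w^{k}) - c\,\gamma^{-2}\norm{w^{k}-\bar z^{k}}^{2}\)
for \refProxFLEX{}, with a constant \(c>0\) depending only on \(\gamma,L_{F},\rho,\sigma\), after which everything follows by soft arguments. To derive this inequality I would treat the two branches of the algorithm separately. When the contraction test in \Cref{State:Prox-FLEX:contraction} succeeds, the update is \(z^{k+1}=z^{k}+d^{k}\) and the test reads \(\V(z^{k+1},\bar z^{k+1},w^{k+1})\le\rho^{2}\V_{k}\); writing \(\rho^{2}\V_{k}=\V_{k}-(1-\rho^{2})\V_{k}\) and then bounding \((1-\rho^{2})\V_{k}\) below by \((1-\gamma L_{F})(1-\rho^{2})\gamma^{-2}\norm{w^{k}-\bar z^{k}}^{2}\) via \Cref{prop:V_performance_measure:lower_bound} (this is where \(\gamma<1/L_{F}\) is used) puts it in the stated form. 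When the line search is invoked, condition~\eqref{eq:Prox-FLEX:descent} is \emph{already} of this form with constant \(\sigma(1-\gamma^{2}L_{F}^{2})\). Taking \(c=\min((1-\gamma L_{F})(1-\rho^{2}),\,\sigma(1-\gamma^{2}L_{F}^{2}))>0\) covers both cases at once. For \refFLEX{} the same argument applies with \(g=0\), where \(\V\) collapses to \(\norm{F(\cdot)}^{2}\), recovering the \(\norm{F(z^{k})}^{2}\)-version from~\eqref{eq:FLEX:descent} and~\eqref{eq:EG:new_Lyapunov_w.o._sol:g_zero}; for \refIFLEX{} there is no contraction branch, the line search~\eqref{eq:I-FLEX:descent} is well posed by \Cref{prop:IFLEX:well_defined} (apart from the trivial case where some \(z^{k}\in\zer(F)\)), and it is already in residual form.

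Granting this inequality, item (i) is immediate: \(\seq{\V(z^{k},\bar z^{k},w^{k})}\) is nonincreasing and bounded below by \(0\) (again \Cref{prop:V_performance_measure:lower_bound}), hence convergent by monotone convergence; for \FLEX{} and \IFLEX{} this is the same statement with \(g=0\). For item (ii) I would telescope from \(0\) to \(k\), giving \(c\,\gamma^{-2}\sum_{i=0}^{k}\norm{w^{i}-\bar z^{i}}^{2}\le\V_{0}-\V_{k+1}\le\V_{0}\), and let \(k\to\infty\); when \(g=0\) one has \(\norm{w^{k}-\bar z^{k}}=\gamma\norm{F(z^{k})-F(\bar z^{k})}\). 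For item (iii), with \(F\) uniformly monotone of modulus \(\phi\) and using \(\bar z^{k}=z^{k}-\gamma F(z^{k})\), the chain \(\phi(\gamma\norm{F(z^{k})})=\phi(\norm{z^{k}-\bar z^{k}})\le\inner{z^{k}-\bar z^{k}}{F(z^{k})-F(\bar z^{k})}\le\gamma\norm{F(z^{k})}\norm{F(z^{k})-F(\bar z^{k})}\) shows the left-hand side tends to \(0\), since \(\norm{F(z^{k})}\) is bounded by item (i) and \(\norm{F(z^{k})-F(\bar z^{k})}\to 0\) by item (ii). Finally, since \(\seq{\norm{F(z^{k})}}\) converges, say to \(\ell\ge 0\), a positive \(\ell\) would force \(\phi(\gamma\norm{F(z^{k})})\ge\phi(\gamma\ell/2)>0\) for large \(k\), contradicting the limit; hence \(\ell=0\).

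The main obstacle is the first step: merging the two structurally different branches of \ProxFLEX{} --- a multiplicative contraction of \(\V\) on ``good'' iterations versus an additive residual decrease from the line search --- into one recursion with a common positive constant. The key device is \Cref{prop:V_performance_measure:lower_bound}, which is exactly what converts the multiplicative gain \((1-\rho^{2})\V_{k}\) into the additive residual \(\gamma^{-2}\norm{w^{k}-\bar z^{k}}^{2}\) that the line-search branch also produces. A smaller subtlety lies in item (iii): \(\phi(\gamma\norm{F(z^{k})})\to 0\) alone does not yield \(\norm{F(z^{k})}\to 0\) because \(\phi\) need not be continuous, so one must feed in the convergence of \(\seq{\norm{F(z^{k})}}\) from item (i) and use monotonicity of \(\phi\) to rule out a positive limit.
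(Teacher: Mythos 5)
Your proposal is correct and follows essentially the same route as the paper: merge the contraction branch and the line-search branch of \ProxFLEX{} into the single residual descent inequality with constant \(\min((1-\gamma L_F)(1-\rho^2),\,\sigma(1-\gamma^2L_F^2))\) via \Cref{prop:V_performance_measure:lower_bound}, specialize to \(g=0\) for \FLEX{}/\IFLEX{}, then conclude (i) by monotone convergence, (ii) by telescoping, and (iii) by the uniform-monotonicity chain. Your closing argument for (iii) via the limit \(\ell\) is a slightly more elaborate way of finishing than needed (monotonicity of \(\phi\) and the fact that it vanishes only at \(0\) already give \(\phi(t_k)\to 0\Rightarrow t_k\to 0\) without invoking convergence of the sequence), but it is a valid completion of the same argument.
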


\subsection{Proofs regarding~\FLEX}

\begin{proof}[Proof of \Cref{thm:FLEX:dk_summable}.]
    This follows from \Cref{thm:ProxFLEX}, since~\refProxFLEX{} reduces to~\refFLEX{} when \(g=0\). 
\end{proof}

\begin{proof}[Proof of \Cref{thm:FLEX:F_unifrom}.]
    See \Cref{lem:(Prox)(I)FLEX:uniformly}. 
\end{proof}

\begin{proof}[Proof of \Cref{thm:FLEX:F_strongly}.]
    Note that~\eqref{eq:FLEX:descent} gives that
    {\begin{allowdisplaybreaks}
    \begin{align}\notag
        \norm{F\p{z^{k+1}}}^2 
        \leq &  \norm{F\p{z^{k}}}^2 -  \sigma\p{1-\gamma^{2}L_{F}^{2}} \norm{F\p{z^{k}} - F\p{\bar{z}^{k}}}^2 \\\notag
        \leq &  \norm{F\p{z^{k}}}^2  - \sigma\mu_{F}^{2}\p{1-\gamma^{2}L_{F}^{2}} \norm{z^{k} - \bar{z}^{k}}^{2}  \\\label{eq:FLEX:dual_descent_plus_strong_give_contraction}
        = & \p{1 - \sigma\gamma^{2}\mu_{F}^{2}\p{1-\gamma^{2}L_{F}^{2}}  } \norm{F\p{z^{k}}}^2
    \end{align}
    \end{allowdisplaybreaks}}%
    for each iteration \(k\) such that the condition in \Cref{State:FLEX:contraction} in~\FLEX{} is false, where \Cref{State:FLEX:EG} in~\FLEX{} is used in the last equality. Combining~\eqref{eq:FLEX:dual_descent_plus_strong_give_contraction} and \Cref{State:FLEX:contraction} in~\FLEX{} gives~\eqref{eq:FLEX:F_strongly:linear}. Moreover, \(\seq{\norm{F(z^k)}}\) converges to zero since \( \max(\rho^2, 1- \sigma\gamma^{2}\mu_{F}^{2}\p{1-\gamma^{2}L_{F}^{2}} ) \in (0,1)\). Since
    \begin{align*}
        \norm{z^{k} - z^{\star}} \leq \mu_{F}^{-1} \norm{F(z^{k}) - F(z^{\star})} = \mu_{F}^{-1} \norm{F(z^{k})}
    \end{align*}
    for each \(k\in\naturals\), \(\seq{z^k}\) converges strongly to \(z^{\star}\in \zer(F)\). 
\end{proof}

\subsection{Proofs regarding~\IFLEX}

\begin{proof}[Proof of \Cref{thm:I-FLEX:injective}.]
    Suppose that \(\seq{z^{k}}[k\in K]\rightharpoonup z^\infty\). Weak continuity of \(F\) and \(\bar{z}^{k} = z^{k} - \gamma F\p{z^{k}}\) give that \(\seq{\bar{z}^{k}}[k\in K] \rightharpoonup \bar{z}^\infty = z^\infty - \gamma F(z^\infty)\). On the other hand, it follows from  \Cref{lem:(Prox)(I)FLEX:residual_summable} and weak continuity of \(F\) that \(F(z^\infty) = F(\bar z^\infty)\), which in view of the injectivity assumption of \(F\), implies \(z^\infty = \bar z^\infty = z^\infty - \gamma F(z^\infty)\). Therefore, \(z^\infty \in \zer\p{F}\), as claimed. 
\end{proof}

\begin{proof}[Proof of \Cref{thm:I-FLEX:dk_bounded}.]
    Note that \Cref{thm:I-FLEX:injective} gives that each weak sequential cluster point of  \(\seq{z^k}\) is in \(\zer \p{F}\). Moreover,~\cite[Lemma 5.31]{bauschke2017convexanalysismonotone} and~\eqref{eq:zk_quasi_fejer} in \Cref{lem:zk_quasi_fejer} give that \(\seq{\norm{z^k-z^\star}}\) converges. Thus,~\cite[Lemma 2.47]{bauschke2017convexanalysismonotone} gives that \(\seq{z^k}\) converges weakly to some point in \(\zer \p{F}\), as claimed. 
\end{proof}

\begin{proof}[Proof of \Cref{thm:I-FLEX:F_unifrom}.]
    See \Cref{lem:(Prox)(I)FLEX:uniformly}. 
\end{proof}

\begin{proof}[Proof of \Cref{thm:I-FLEX:F_strongly}.]
    Note that~\eqref{eq:I-FLEX:descent} gives that
    {\begin{allowdisplaybreaks}
    \begin{align*}
        \norm{F\p{z^{k+1}}}^2 
        \leq &  \norm{F\p{z^{k}}}^2 -  \sigma\p{1-\gamma^{2}L_{F}^{2}} \norm{F\p{z^{k}} - F\p{\bar{z}^{k}}}^2 \\
        \leq &  \norm{F\p{z^{k}}}^2  - \sigma\mu_{F}^{2}\p{1-\gamma^{2}L_{F}^{2}} \norm{z^{k} - \bar{z}^{k}}^{2}  \\
        = & \p{1 - \sigma\gamma^{2}\mu_{F}^{2}\p{1-\gamma^{2}L_{F}^{2}}  } \norm{F\p{z^{k}}}^2
    \end{align*}
    \end{allowdisplaybreaks}}%
    for each \(k\in\naturals\), where \Cref{State:I-FLEX:EG} in~\IFLEX{} is used in the last equality. Therefore, \(\seq{\norm{F(z^k)}}\) converges to zero since \( 1 - \sigma\gamma^{2}\mu_{F}^{2}\p{1-\gamma^{2}L_{F}^{2}}  \in (0,1)\). Since
    \begin{align*}
        \norm{z^{k} - z^{\star}} \leq \mu_{F}^{-1} \norm{F(z^{k}) - F(z^{\star})} = \mu_{F}^{-1} \norm{F(z^{k})}
    \end{align*}
    for each \(k\in\naturals\), \(\seq{z^k}\) converges strongly to \(z^{\star}\in \zer(F)\). 
\end{proof}

\subsection{Proofs regarding~\ProxFLEX}

\begin{proof}[Proof of \Cref{thm:ProxFLEX}.]
    Set \(\tau_{k} = 1\) for the iterations when the condition in \Cref{State:Prox-FLEX:contraction} in~\ProxFLEX{} is true and let \(z^{\star}\in\zer \p{F + \partial g}\). Then~\eqref{eq:zk_quasi_fejer} in \Cref{lem:zk_quasi_fejer} and {\cite[Lemma 5.31]{bauschke2017convexanalysismonotone}} imply that \(\seq{\norm{z^k-z^\star}}\) converges. Thus, the proof is complete if we can show that weak sequential cluster points of \(\seq{z^{k}}\) belong to \(\zer\p{F+\partial g}\), due to {\cite[Lemma 2.47]{bauschke2017convexanalysismonotone}}. 
    
    For this, it suffices to show that \(\seq{\norm{\bar{z}^{k}-z^{k}}}\) converges to zero. Indeed, suppose that \(\p{z^{k}}_{k\in K}\rightharpoonup z^{\infty}\) for some \(z^{\infty} \in \calH\) and \(\seq{\norm{\bar{z}^{k}-z^{k}}}\) converges to zero. Then \(\p{\bar{z}^{k}}_{k\in K}\rightharpoonup z^{\infty}\). Moreover, the proximal evaluation in \Cref{State:Prox-FLEX:EG:1} in~\ProxFLEX{} can equivalently be written as 
    \begin{align}\label{Prox-FLEX:EG:1:rewritten}
        \gamma^{-1}\p{z^{k}- \bar{z}^{k} } - F\p{z^{k}} + F\p{\bar{z}^{k}} \in \p{F + \partial g}\p{\bar{z}^{k}}.
    \end{align}
    The left-hand side of~\eqref{Prox-FLEX:EG:1:rewritten} converges strongly to zero since \(F\) is continuous and \((\norm{z^{k}-\bar{z}^{k}})_{k\in\naturals}\) converges to zero. Moreover, the operator \(F + \partial g\) is maximally monotone, since \(F\) is maximally monotone (by continuity and monotonicity~\cite[Corollary 20.28]{bauschke2017convexanalysismonotone}), \(\partial g\) is maximally monotone~\cite[Theorem 20.48]{bauschke2017convexanalysismonotone}, and \(F\) has full domain~\cite[Corollary 25.5]{bauschke2017convexanalysismonotone}. Thus,~\cite[Proposition 20.38]{bauschke2017convexanalysismonotone} gives that \(z^{\infty} \in \zer\p{F + \partial g}\), and by~\cite[Lemma 2.47]{bauschke2017convexanalysismonotone} we conclude that \(\p{z^{k}}_{k\in\naturals}\) converges weakly to a point in \(\zer\p{F + \partial g}\).

    It remains to show that \(\seq{\norm{\bar{z}^{k}-z^{k}}}\) converges to zero, which we do by showing that \(\seq{\mathcal{V}_{k}}\) converges to zero and applying \Cref{prop:V_performance_measure:lower_bound}. Let  \(K_{<1} = \set{k\in \naturals \mid \tau_k < 1} \). Suppose that \(\abs{K_{<1}} < \infty\). Then \(\V_{k+1} \leq \rho^{2}\V_{k}\) for each \(k \in \naturals\) such that \(k > \max K_{<1}\), and \(\seq{\V_{k}}\) converges to zero since \(\rho \in (0,1)\). On the contrary, suppose that \(\abs{K_{<1}} = \infty\). Let \(\Gamma : K_{<1} \to K_{<1}\) such that \(\Gamma(k) = \min\set{i \in K_{<1} \mid k < i } \) for each \(k\in K_{<1}\). Let \(k\in K_{<1}\), and notice that \(\tau_k \leq \bar{\tau}\) for any such index, where \(\bar{\tau} = \max \set{ \beta^{i} \mid i \in \llbracket 1 , M \rrbracket}\cup\set{0}<1\). Inductively summing~\eqref{eq:zk_quasi_fejer} in \Cref{lem:zk_quasi_fejer} from \(k\) to \(\Gamma(k)-1\) gives
    \begin{align}\label{eq:Prox-FLEX:zk_quasi_fejer:sum}
        \norm{z^{\Gamma(k)} - z^\star}^2 
        \leq& 
        \norm{z^k-z^\star}^2
        - (1- \bar{\tau})\alpha\p{\gamma, L_F} \V_k +  \sum_{i=k}^{\Gamma(k)-1}\varepsilon_i, 
    \end{align}
    where we used the fact that \(\tau_i = 1\) for any \(i \in K_{1}\). Inductively summing over all \(k \in K_{<1}\) in~\eqref{eq:Prox-FLEX:zk_quasi_fejer:sum}, rearranging, and dividing by \((1- \bar{\tau})\alpha\p{\gamma, L_F}>0\) gives 
    {\begin{allowdisplaybreaks}
    \begin{align} \notag
        \sum_{k\in K_{<1}}  \V_{k} \leq{}&
        \frac{\sum_{k\in K_{<1}} \p{ \norm{z^k-z^\star}^2 - \norm{z^{\Gamma(k)} - z^\star}^2 +  \sum_{i=k}^{\Gamma(k)-1}\varepsilon_i  } }{(1- \bar{\tau})\alpha\p{\gamma, L_F}} \\ \label{eq:Prox-FLEX:partial_F_sum_bounded}
        \leq{}&
        \frac{\norm{z^{\min(K_{<1}) }-z^\star}^2 + \sum_{k=0}^{\infty} \varepsilon_k  }{(1- \bar{\tau})\alpha\p{\gamma, L_F}} < \infty,
    \end{align}
    \end{allowdisplaybreaks}}%
    where summability of \(\seq{\varepsilon_k}\) is used in the last inequality. Note that 
    {\begin{allowdisplaybreaks}
    \begin{align*}
        \sum_{k=0}^{\infty} \V_{k} 
        &= \sum_{k\in K_{<1}} \sum_{i=k}^{\Gamma(k)-1} \V_{i} \\
        & \leq\sum_{k\in K_{<1}} \sum_{i=k}^{\Gamma(k)-1} \rho^{2(i - k)}\V_{k} \\
        & \leq \frac1{1-\rho^{2}}\sum_{k\in K_{<1}}  \V_{k}
        < \infty,
    \end{align*}
    \end{allowdisplaybreaks}}%
    where \Cref{State:Prox-FLEX:contraction} in~\ProxFLEX{} is used in the first inequality, the expression for the geometric series is used in the second inequality, and~\eqref{eq:Prox-FLEX:partial_F_sum_bounded} is used in the last inequality. This completes the proof. 
\end{proof}

\section{Superlinear convergence}\label{sec:superlinear}
The convergence analyses presented so far have been blind to the choice of directions \(\seq{d^k}\); nevertheless, attaining a fast convergence rate relies on their precise choice. This section presents a minimal set of assumptions on the directions that ensure superlinear convergence. Our main focus will be on quasi-Newton-type directions that are computed as 
\begin{equation}\label{eq:direction:Rgam}
    d^k = - H_{k} R_\gamma(z^k),\quad \text{where} \quad R_\gamma  = \tfrac1{\gamma} \p{\Id - \prox_{\gamma g} \circ \p{\Id - \gamma F} },
\end{equation}
\(\gamma\in\reals_{++}\), \(H_{k}:\calH\to\calH\) is a linear operator encapsulating information of the geometry of the residual mapping \(R_\gamma\) at \(z^k\), and \(F\) and \(g\) satisfy \Cref{ass:main}. The specific way \(H_k\) is computed determines the underlying quasi-Newton method (see \Cref{sec:numerical} for details). Notice that the zeros of \(R_\gamma\) coincide with the set of solutions of~\eqref{eq:the_inclusion_problem}. Moreover, when \(g = 0\), \(R_\gamma\) reduces to \(F\), and the directions are given by \(d^k = - H_{k} F(z^k)\). The following assumption on the directions \(\seq{d^k}\) can be seen as a boundedness assumption on the linear operators \(\seq{H_k}\). However, note that the assumption applies to directions beyond the ones given in~\eqref{eq:direction:Rgam}. 

\begin{assumption}\label{ass:dk:bound}%
    The sequence of directions \(\seq{d^k}\) used in~\FLEX{},~\IFLEX{}, or~\ProxFLEX{} satisfies \( \norm{d^k} \leq D \norm{R_\gamma(z^k)} \) for each \(k\in\naturals\) such that \(k \geq K\), for some constants \(D \geq 0\) and \(K \in \naturals\), where \(R_\gamma\) denotes the residual operator as defined in~\eqref{eq:direction:Rgam} (the function \(g\) is set to zero in the particular cases of~\FLEX{} and~\IFLEX).
\end{assumption}

\Cref{ass:dk:bound} is a natural assumption for directions defined in~\eqref{eq:direction:Rgam}. For example, under suitable regularity conditions for regularized Newton directions—specifically when \(g=0\)—we demonstrate this in \Cref{prop:dk_Newton}. Note that in \Cref{prop:dk_Newton}, we assume that \(F\) is continuously Fréchet differentiable; however, this assumption is made solely for illustrative purposes and is not required elsewhere in the paper. \Cref{ass:dk:bound} has also been utilized in the context of minimization and in finding zeros of nonexpansive maps, as seen in~\cite[Theorem 5.7.A3]{ahookhosh2021bregmanforwardbackward} and~\cite[Assumption 2]{themelis2019supermannsuperlinearlyconvergent}, respectively.

\begin{proposition}\label{prop:dk_Newton}  
    Let \(F:\calH\to \calH\) be monotone and continuously Fr\'echet differentiable, and suppose that the Fr\'echet derivative \(\mathsf{D} F\) at \(z^\star \in \zer\p{F}\) is left invertible. 
    Suppose that \(\seq{z^k},\seq{d^k}\in\calH^{\naturals}\) are such that
    \begin{equation}\label{eq:reg_Newton_dir}
        \p{r_k\Id + \mathsf{D} F(z^k)}d^k = - F(z^k)
    \end{equation}
    for some sequence \(\seq{r_k} \in \reals_{++}^{\naturals}\), and that \(\seq{z^k}\) converges strongly to \(z^\star\). Then, \Cref{ass:dk:bound} is satisfied with \(g =0\). 
    \begin{proof}
        Let \(t>0\) and note that
        \begin{align}\label{eq:DF:monotone}
            0 \leq \frac{\inner{ F(z + t v) - F(z)}{ z + t v - z } }{t^2} \xrightarrow[t	\downarrow 0]{} \inner{ \mathsf{D} F(z)v}{ v }
        \end{align}
        for any \(v\in \calH\) and for any \(z \in \calH\), by monotonicity of \(F\). This implies that the bounded linear operator  \(r_k\Id + \mathsf{D} F(z^k)\) is \(r_k\)-strongly monotone, and therefore invertible for each \(k \in \naturals\). This, in turn, ensures that the regularized Newton update~\eqref{eq:reg_Newton_dir} is well-defined, i.e., \(d^{k}\) is uniquely defined at each iteration.
        
        Moreover, since \(\mathsf{D} F(z^\star)\) is left invertible, there exists \(c_1>0\) such that \(\norm{\textstyle\mathsf{D} F(z^\star) v} \geq c_1\norm{v}\) for any \(v\in \calH\)~\cite[Proposition 10.29]{axler2020measureintegration}. This observation combined with \(z^k \to z^\star\) and continuity of \(\mathsf{D} F(\cdot)\) implies that there exists \(c_2 >0\) and \(K\in\naturals\) such that \(\norm{\mathsf{D} F(z^k) v} \geq c_2\norm{v}\) for any \(v\in \calH\) and for any \(k\geq K\). Therefore, 
        {\begin{allowdisplaybreaks}
        \begin{align*}
           \norm{F(z^k)}^{2} {}={}&  \norm{r_k\Id + \mathsf{D} F(z^k)d^k}^2 \\
           {}={}& \norm{r_k d^k}^2 + 2r_k \inner{ \mathsf{D} F(z^k)d^k}{ d^k} + \norm{\mathsf{D} F(z^k)d^k}^2 \\
            {}\geq{}& \norm{\mathsf{D} F(z^k)d^k}^2 \\
            {}\geq{}& c_2^2\norm{d^k}^2
        \end{align*}
        \end{allowdisplaybreaks}}%
        for each \(k\geq K\), where the first inequality follows from~\eqref{eq:DF:monotone} and \(r_k>0\). This establishes that \Cref{ass:dk:bound} is satisfied with \(D = 1/c_2\), when \(g=0\). 
    \end{proof}        
\end{proposition}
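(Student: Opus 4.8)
The plan is to combine two facts: that monotonicity of $F$ forces the Fréchet derivative $\mathsf{D} F(z)$ to be positive semidefinite at every $z$, and that the quantitative left invertibility of $\mathsf{D} F(z^\star)$ — i.e. the existence of $c_1>0$ with $\norm{\mathsf{D} F(z^\star)v}\geq c_1\norm{v}$ for all $v\in\calH$, which holds because a bounded left inverse $B$ yields $\norm{v}=\norm{B\mathsf{D} F(z^\star)v}\leq\norm{B}\,\norm{\mathsf{D} F(z^\star)v}$ — persists, with a slightly worse constant, on a neighborhood of $z^\star$ by continuity of $\mathsf{D} F(\cdot)$. Since setting $g=0$ turns $R_\gamma$ into $F$ in~\eqref{eq:direction:Rgam}, verifying \Cref{ass:dk:bound} amounts to producing $D\geq0$ and $K\in\naturals$ with $\norm{d^k}\leq D\norm{F(z^k)}$ for all $k\geq K$.

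First I would show $\inner{\mathsf{D} F(z)v}{v}\geq0$ for every $z,v\in\calH$: apply the monotonicity inequality to the pair $z$ and $z+tv$, divide by $t^2>0$, and let $t\downarrow0$, using Fréchet differentiability of $F$. As a byproduct, $r_k\Id+\mathsf{D} F(z^k)$ is $r_k$-strongly monotone and hence boundedly invertible, so~\eqref{eq:reg_Newton_dir} determines $d^k$ uniquely.

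Next I would propagate the lower bound from $z^\star$ to the tail of the sequence. For any $v\in\calH$ and any $k$,
\begin{align*}
  \norm{\mathsf{D} F(z^k)v}
  &\geq \norm{\mathsf{D} F(z^\star)v}-\norm{\bigl(\mathsf{D} F(z^k)-\mathsf{D} F(z^\star)\bigr)v} \\
  &\geq \bigl(c_1-\norm{\mathsf{D} F(z^k)-\mathsf{D} F(z^\star)}\bigr)\norm{v},
\end{align*}
where the subtracted norm is the operator norm. Since $\mathsf{D} F(\cdot)$ is continuous and $z^k\to z^\star$, that operator-norm term eventually drops below $c_1/2$, so there are $K\in\naturals$ and $c_2>0$ (take $c_2=c_1/2$) with $\norm{\mathsf{D} F(z^k)v}\geq c_2\norm{v}$ for all $v\in\calH$ and all $k\geq K$.

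Finally I would expand the residual identity: for $k\geq K$,
\begin{align*}
  \norm{F(z^k)}^2
  &= \norm{\p{r_k\Id+\mathsf{D} F(z^k)}d^k}^2 \\
  &= r_k^2\norm{d^k}^2 + 2r_k\inner{\mathsf{D} F(z^k)d^k}{d^k} + \norm{\mathsf{D} F(z^k)d^k}^2 \\
  &\geq \norm{\mathsf{D} F(z^k)d^k}^2 \geq c_2^2\norm{d^k}^2,
\end{align*}
since $r_k>0$ and $\inner{\mathsf{D} F(z^k)d^k}{d^k}\geq0$ by the first step. Taking square roots yields $\norm{d^k}\leq c_2^{-1}\norm{F(z^k)}$ for $k\geq K$, establishing \Cref{ass:dk:bound} with $D=1/c_2$. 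The only step requiring real care is the perturbation argument — turning the pointwise bound at $z^\star$ into a uniform bound along the tail — while the remainder is one-line linear algebra and a single limit.
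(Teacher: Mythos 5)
Your proposal is correct and follows essentially the same route as the paper's proof: positive semidefiniteness of $\mathsf{D} F$ from monotonicity, a quantitative lower bound $\norm{\mathsf{D} F(z^k)v}\geq c_2\norm{v}$ near $z^\star$, and the expansion of $\norm{(r_k\Id+\mathsf{D} F(z^k))d^k}^2$ to drop the nonnegative cross and diagonal terms. The only difference is that you spell out two steps the paper delegates to a citation or to the reader — deriving the bound $\norm{\mathsf{D} F(z^\star)v}\geq c_1\norm{v}$ from a bounded left inverse, and the triangle-inequality perturbation argument giving $c_2=c_1/2$ — both of which are fine.
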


\begin{remark}
    In the case of~\FLEX, when the operator is strongly monotone, the sequence \(\seq{\norm{F(z^k)}}\) converges \(Q\)-linearly to zero, as established in \Cref{thm:FLEX:F_strongly}. This observation, combined with \Cref{ass:dk:bound} is sufficient to conclude that \(\seq{d^k} \in \ell^1(\naturals;\,\calH)\), thereby yielding global convergence as demonstrated in \Cref{thm:FLEX:dk_summable}. An analogous argument extends to~\ProxFLEX{} after incorporating the strengthening discussed in \Cref{rem:strength:proxflex}.
\end{remark}

We proceed to quantify the quality of the directions used in the algorithms that guarantee fast convergence. The classical condition of~\cite[Chapter 7.5]{facchinei2004finitedimensionalvariationalII} for Newton-type methods identifies a sequence of directions \(\seq{d^k}\) relative to a sequence \(\seq{z^k}\) converging to \(z^\star\) as superlinear if 
\begin{equation}\label{eq:Suplin:Facchinei}
        \lim_{k\to \infty} \frac{\norm{z^k + d^k - z^\star}}{\norm{z^k - z^\star}} = 0.
\end{equation}
This notion a priori assumes the convergence of the sequence \(\seq{z^k}\). Here, we use a slightly refined notion and define superlinear directions similar to~\cite[Definition VI.2]{themelis2019supermannsuperlinearlyconvergent}. 
\begin{definition}\label{def:superlin}%
    Suppose that \(\gamma\in\reals_{++}\), \((z^k)_{k\in\N}, (d^k)_{k\in\N} \in \calH^{\naturals}\), \Cref{ass:main} holds, \(T_1^{\gamma}\) and \(T_2^{\gamma}\) are the algorithmic operators defined in~\eqref{eq:T1T2}, and \(\V\) is the Lyapunov function given in~\eqref{eq:V}. Then we say that the sequence of directions \((d^k)_{k\in\N}\) is \emph{superlinear relative to} \((z^k)_{k\in\N}\) if 
    \begin{equation}\label{eq:dir:superlin}
        \lim_{k\to\infty}\frac{\V(z^k+d^k, T_1^\gamma(z^k+d^k), T_2^\gamma(z^k+d^k))}{\V(z^k, T_1^\gamma(z^k), T_2^\gamma(z^k))} = 0.
    \end{equation}
\end{definition}

In the case of solving monotone equations where \(g = 0\), addressed by~\FLEX{} and~\IFLEX{},~\eqref{eq:dir:superlin} reduces to 
\begin{equation}\label{def:eq:dir:superlin:F}
    \lim_{k\to\infty}{\frac{\norm{F(z^k + d^k)}}{\norm{F(z^k)}}} = 0.
\end{equation}
\new{This condition is closely related to the classical Dennis-Mor\'e assumption \cite{dennis1977quasinewtonmethods}. Specifically, when \(F\) is strictly differentiable at its zeros, the Dennis-Mor\'e assumption implies \eqref{def:eq:dir:superlin:F}, as shown in \cite[Theorem VI.7]{themelis2019supermannsuperlinearlyconvergent}. In particular, the condition is satisfied by Broyden’s method under mild regularity assumptions at the limit points \cite[Theorem VI.8]{themelis2019supermannsuperlinearlyconvergent}. 
}

 \begin{remark}\label{rem:superlin:DM}
     The superlinear convergence results presented in \Cref{thm:suplin} also hold under~\eqref{eq:Suplin:Facchinei} of~\cite{facchinei2004finitedimensionalvariationalII} since it implies the notion in \Cref{def:superlin}. Indeed, by \Cref{ass:dk:bound}
     {\begin{allowdisplaybreaks}
    \begin{align*}
        \norm{d^k}^2 
        \leq{}&
        D^2 \left(\norm{z^k - T_2^\gamma(z^k)} + \norm{T_2^\gamma(z^k) - T_1^\gamma(z^k)} \right)^2
        \\
        \leq{}&
        2D^2 \left(\norm{z^k - T_2^\gamma(z^k)}^2 + \norm{T_2^\gamma(z^k) - T_1^\gamma(z^k)}^2 \right) \\
        \leq{}& \frac{2\gamma^2D^2}{1-\gamma L_F} \V(z^k, T_1^\gamma(z^k), T_2^\gamma(z^k))
    \end{align*}
    \end{allowdisplaybreaks}}%
    for each \(k\in\naturals\) such that \(k \geq K\), where the triangle inequality is used in the first inequality and \Cref{prop:V_performance_measure:lower_bound} is used in the last inequality. Hence
    \begin{align}\label{eq:superlinear:ineq}
        \frac{\V(z^k+d^k, T_1^\gamma(z^k+d^k), T_2^\gamma(z^k+d^k))}{\V(z^k, T_1^\gamma(z^k), T_2^\gamma(z^k))}
        {}\leq{}  
        \frac{2\gamma^2D^2}{(1-\gamma L_F)\alpha\p{\gamma, \LF}}
        \frac{\norm{z^k+d^k - z^\star}^2}{\norm{d^k}},
    \end{align}
    for each \(k\in\naturals\) such that \(k \geq K\), where 
    \(\V(z^k+d^k, T_1^\gamma(z^k+d^k), T_2^\gamma(z^k+d^k)) \leq \norm{z^k+d^k - z^\star}^2/ \alpha\p{\gamma, L_F} \) is used (see \Cref{thm:EG:primal_descent_for_V}). Combining~\eqref{eq:superlinear:ineq} with~\eqref{eq:Suplin:Facchinei} and the fact that \(\lim_{k \to \infty} \norm{z^k - z^\star}/\norm{d^k} = 1\) (see~\cite[Lemmma 7.5.7]{facchinei2004finitedimensionalvariationalII}) shows that the ratio on the left-hand-side of~\eqref{eq:superlinear:ineq} vanishes. Therefore,~\eqref{eq:dir:superlin} is a weaker condition than~\eqref{eq:Suplin:Facchinei} under \Cref{ass:dk:bound}. We also refer the reader to~\cite{facchinei2004finitedimensionalvariationalII} for further details.
 \end{remark}

\new{As shown below in \Cref{thm:suplin:summable}, \Cref{def:superlin} in conjunction with \Cref{ass:dk:bound} is sufficient to conclude \(\seq{d^k}\in\ell^{1}(\naturals;\,\calH)\), establishing global weak sequential convergence by \Cref{thm:ProxFLEX}. See also \Cref{thm:FLEX:dk_summable,thm:I-FLEX:dk_bounded} for~\FLEX{} and~\IFLEX, respectively.}

\begin{theorem}\label{thm:suplin}
Suppose that \Cref{ass:main} and \Cref{ass:dk:bound} hold, \(\zer\p{F + \partial g }\neq \emptyset\),  \(T_1^{\gamma}\) and \(T_2^{\gamma}\) are the algorithmic operators defined in~\eqref{eq:T1T2}, \(\V\) is the Lyapunov function given in~\eqref{eq:V}, and \(\seq{d^k}\) is superlinear relative to the sequence \(\seq{z^k}\) generated by either~\refFLEX{},~\refIFLEX{}, or~\refProxFLEX{}. 
Then, the following hold. 
\begin{thmenumerate}
      \item \label{thm:suplin:full_accepted}  \(z^{k+1} = z^k + d^k\) for all \(k\in\naturals\) sufficiently large. 
      \item \label{thm:suplin:Q_superlinear} \(\seq{\V(z^k, T_1^\gamma(z^k), T_2^\gamma(z^k))}\) converges to zero at least \(Q\)-superlinearly.  
      \item \label{thm:suplin:summable} \(\seq{d^k}\in\ell^{1}(\naturals;\,\calH)\) with \(\seq{d^k}\) converging to zero at least \(R\)-superlinearly.  
      \item \label{thm:suplin:R_superlinear} If \(\dim \calH < \infty\), then \(\seq{z^k}\) converges to some point \(z^\star \in \zer\p{F + \partial g }\) at least \(R\)-superlinearly.
  \end{thmenumerate}  
\end{theorem}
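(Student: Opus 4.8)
The plan is to prove the four items in order, with \Cref{ass:dk:bound} and the superlinearity hypothesis \eqref{eq:dir:superlin} as the main inputs, together with the Lyapunov machinery of \Cref{sec:Lyapunov} and the global-convergence results of \Cref{sec:algs}. Throughout, write \(\V_k := \V(z^k, T_1^\gamma(z^k), T_2^\gamma(z^k))\); we may assume \(\V_k > 0\) for every \(k\), since otherwise \Cref{prop:V_performance_measure:zero_iff} gives \(z^k \in \zer(F+\partial g)\), whence \(d^k = -H_k R_\gamma(z^k) = 0\) and the iteration stagnates at the solution. For \emph{Item (i)}, the point is that superlinearity forces the algorithm eventually to take the full step \(z^{k+1} = z^k + d^k\). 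For \ProxFLEX{}, the acceptance test in \Cref{State:Prox-FLEX:contraction} is precisely \(\V(z^k+d^k, T_1^\gamma(z^k+d^k), T_2^\gamma(z^k+d^k)) \le \rho^2 \V_k\), whose left-hand side is \(o(\V_k)\) by \eqref{eq:dir:superlin}, so it holds for all large \(k\); the same reasoning covers \FLEX{} with \(\V_k = \norm{F(z^k)}^2\) and the test \(\norm{F(z^k+d^k)} \le \rho\norm{F(z^k)}\) via \eqref{def:eq:dir:superlin:F}. Since \IFLEX{} has no contraction safeguard, here I would instead show that the unit step \(\tau_k = 1\) passes the line search \eqref{eq:I-FLEX:descent} for all large \(k\): because \(\bar z^k = z^k - \gamma F(z^k)\) and \(F\) is \(L_F\)-Lipschitz, \(\norm{F(z^k)-F(\bar z^k)}^2 \le \gamma^2 L_F^2 \norm{F(z^k)}^2\), so the right-hand side of \eqref{eq:I-FLEX:descent} at \(z^{k+1}=z^k+d^k\) is at least a fixed positive multiple of \(\norm{F(z^k)}^2\), namely \(\bigl(1 - \sigma(1-\gamma^2L_F^2)\gamma^2L_F^2\bigr)\norm{F(z^k)}^2\), whereas \eqref{def:eq:dir:superlin:F} makes the left-hand side \(o(\norm{F(z^k)}^2)\).

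Fix \(K_0\) with \(z^{k+1} = z^k + d^k\) for all \(k \ge K_0\). Then the numerator and denominator in \eqref{eq:dir:superlin} equal \(\V_{k+1}\) and \(\V_k\), respectively, so \(\V_{k+1}/\V_k \to 0\), which is \emph{Item (ii)}. For \emph{Item (iii)}, combining \Cref{ass:dk:bound} with \Cref{prop:V_performance_measure:lower_bound} (exactly as in the chain of inequalities in \Cref{rem:superlin:DM}) gives \(\norm{d^k}^2 \le c\,\V_k\) for some constant \(c>0\) and all \(k \ge K\); with Item (ii) this yields \(\norm{d^k} \le \sqrt{c\,\V_k}\), where \(\sqrt{\V_k} \to 0\) \(Q\)-superlinearly. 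A \(Q\)-superlinearly vanishing positive sequence is eventually dominated by a geometric one, hence summable with tails \(\sum_{j\ge k}\sqrt{\V_j} \le c'\sqrt{\V_k}\); so \(\seq{d^k}\in\ell^1(\naturals;\,\calH)\) and \(d^k \to 0\) at least \(R\)-superlinearly.

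For \emph{Item (iv)}, summability of \(\seq{d^k}\) together with \(z^{k+1}=z^k+d^k\) for \(k\ge K_0\) makes \(\seq{z^k}\) norm-Cauchy, hence strongly convergent to some \(z^\star\); since \(\seq{d^k}\in\ell^1(\naturals;\,\calH)\), \Cref{thm:ProxFLEX} (resp.\ \Cref{thm:FLEX:dk_summable}) shows the weak limit of \(\seq{z^k}\) lies in \(\zer(F+\partial g)\) and therefore equals \(z^\star\), while for \IFLEX{} one instead uses \(\norm{F(z^k)}^2 = \V_k \to 0\) and continuity of \(F\) to conclude \(F(z^\star)=0\). Finally, for \(k\ge K_0\), \(\norm{z^k-z^\star} \le \sum_{j\ge k}\norm{d^j} \le \sqrt{c}\sum_{j\ge k}\sqrt{\V_j} \le \sqrt{c}\,c'\sqrt{\V_k}\), and since \(\sqrt{\V_k}\to 0\) \(Q\)-superlinearly, \(\seq{z^k}\to z^\star\) at least \(R\)-superlinearly; the finite-dimensional hypothesis in (iv) enters (if at all) only in pinning down the strong limit as a zero.

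The main obstacle is \emph{Item (i)} for \IFLEX{}: unlike \FLEX{} and \ProxFLEX{}, it has no built-in contraction test, so one must show that the unit step survives the line search, which is precisely where Lipschitz continuity of \(F\) is needed to control the line-search residual from below by a positive multiple of \(\norm{F(z^k)}^2\) before superlinearity can be invoked. Everything else is routine bookkeeping that converts \(Q\)-superlinear decay of \(\V_k\) into the stated summability and \(R\)-superlinear rates for \(\seq{d^k}\) and \(\seq{z^k}\). (A technical caveat: for \IFLEX{} without injectivity, \Cref{prop:IFLEX:well_defined} does not a priori guarantee a well-defined infinite iterate sequence; once Item (i) applies, the tail is well-defined, which is what \Cref{thm:suplin} tacitly assumes.)
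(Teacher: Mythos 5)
Your proposal is correct and follows the same four-step structure as the paper's proof: eventual acceptance of the full step from \eqref{eq:dir:superlin}, $Q$-superlinear decay of $\V_k$, the bound $\norm{d^k}\leq C\sqrt{\V_k}$ from \Cref{ass:dk:bound} and \Cref{prop:V_performance_measure:lower_bound} yielding summability and $R$-superlinear decay of the directions, and finally the rate for the iterates. Two points of difference are worth noting. First, for \IFLEX{} the paper disposes of item \emph{(i)} in one sentence ("$\tau_k=1$ is always accepted due to \eqref{def:eq:dir:superlin:F}"), whereas you supply the missing quantitative step, lower-bounding the right-hand side of \eqref{eq:I-FLEX:descent} by $\bigl(1-\sigma(1-\gamma^2L_F^2)\gamma^2L_F^2\bigr)\norm{F(z^k)}^2$ via Lipschitz continuity; this is exactly the detail the paper leaves implicit and it is correct. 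Second, for item \emph{(iv)} the paper takes a more roundabout route, writing $\norm{z^{k+1}-z^k}\leq\kappa\prod_{i=1}^k c_i$ and proving $Q$-superlinear decay of the tail sums $\mu_k$ by an algebraic manipulation of the ratio $\mu_k/\mu_{k-1}$, while you bound $\norm{z^k-z^\star}$ directly by $\sum_{j\geq k}\norm{d^j}\leq \sqrt{c}\,c'\sqrt{\V_k}$ using geometric domination of the tail; your version is shorter and equally rigorous, and it makes transparent your (correct) side observation that strong convergence of the iterates follows from the Cauchy property alone, so finite-dimensionality is used only in matching the statement rather than being essential to the argument.
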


\begin{proof}
    The proof is presented for~\ProxFLEX{}, with the necessary adjustments for~\FLEX{} and~\IFLEX{} outlined at the end of the proof. 
    \begin{itemize}
        \item[\ref{thm:suplin:full_accepted}] Follows from~\eqref{eq:dir:superlin} since \Cref{State:Prox-FLEX:contraction} in~\ProxFLEX{} is true for all \(k\in\naturals\) sufficiently large.
        \item[\ref{thm:suplin:Q_superlinear}] Follows from~\ref{thm:suplin:full_accepted} and~\eqref{eq:dir:superlin}.
        \item[\ref{thm:suplin:summable}] Note that \Cref{ass:dk:bound} and \Cref{prop:V_performance_measure:lower_bound} give that 
        \begin{align*}
            \norm{d^k} \leq \frac{D}{\gamma}\norm{z^{k} - T_1^\gamma(z^k)} \leq \frac{2D}{\sqrt{1-\gamma L_F}}  \sqrt{\V(z^{k}, T_1^\gamma(z^k), T_2^\gamma(z^k))}
        \end{align*}
        for each \(k\in\naturals\) such that \(k\geq K\). The claim now follows from~\ref{thm:suplin:Q_superlinear}.
        \item[\ref{thm:suplin:R_superlinear}] \Cref{thm:ProxFLEX} and~\ref{thm:suplin:summable} imply that the sequence \(\seq{z^k}\) converges to some point \(z^\star\in \zer \p{F + \partial g }\). Since \(z^{k+1}-z^{k} = d^{k}\) for all \(k\in\naturals\) sufficiently large,~\ref{thm:suplin:summable} implies that \(\seq{z^{k+1} - z^k}\) converges to zero at least \(R\)-(super)linearly. In particular, there exists \(\kappa \in \reals_{++}\) and \(\seq{c_k} \in \reals_{++}^{\naturals}\) such that \(\lim_{k\to\infty}c_{k} = 0\) and 
        \begin{align}\label{eq:superlinear:diff_R_linear}
            \norm{z^{k+1} - z^k} \leq \kappa \prod_{i=1}^{k} c_i
        \end{align}
        for each \(k\in\naturals\). Let \(k,j\in\naturals\) such that \(j>k\). The triangle inequality and~\eqref{eq:superlinear:diff_R_linear} give that
        \begin{align*}
            \norm{z^k - z^\star} \xleftarrow[j \rightarrow \infty]{} \norm{z^k - z^j} \leq \sum_{\ell=k}^{j-1} \norm{z^{\ell}- z^{\ell+1}} \leq \kappa \sum_{\ell=k}^{j-1} \prod_{i=1}^{\ell} c_i \xrightarrow[j \rightarrow \infty]{} \kappa \sum_{\ell=k}^{\infty} \prod_{i=1}^{\ell} c_i = \mu_k.
        \end{align*}
        The sequence \(\seq{\mu_k}\in\reals_{++}^{\naturals}\) converges to zero at least \(Q\)-superlinearly since 
        \begin{align*}
            \frac{\mu_k}{\mu_{k-1}}
            =
            \frac{\sum_{\ell=k}^{\infty} \prod_{i=1}^{\ell} c_i}{\sum_{\ell=k-1}^{\infty} \prod_{i=1}^{\ell} c_i}
            =
            \frac{%
                \big(%
                    \prod_{i=1}^{k-1} c_i
                \big)
                \big(%
                    \sum_{\ell=k}^{\infty} \prod_{i=k}^{\ell} c_i
                \big)
            }{%
                \big(%
                    \prod_{i=1}^{k-1} c_i
                \big)%
                \big(%
                    1 + \sum_{\ell=k}^{\infty} \prod_{i=k}^{\ell} c_i
                \big)%
                }    
             =
             \frac{%
                \big(%
                    \sum_{\ell=k}^{\infty} \prod_{i=k}^{\ell} c_i
                \big)
            }{%
                \big(%
                    1 + \sum_{\ell=k}^{\infty} \prod_{i=k}^{\ell} c_i
                \big)%
                }
                \to 0    
        \end{align*}
        and \(\lim_{k\to\infty}\sum_{\ell=k}^{\infty} \prod_{i=k}^{\ell} c_i = 0\). Thus, \(\seq{z^k}\) converges to \(z^{\star}\) at least \(R\)-superlinearly, as claimed.
    \end{itemize}
    The assertions for~\FLEX{} follow directly, as setting 
    \(g=0\) reduces~\ProxFLEX{} and its underlying assumptions to those of~\FLEX{}. For~\IFLEX{}, the only distinction is that, for sufficiently large \(k\), \(\tau_k = 1\) is always accepted in \Cref{State:I-FLEX:LS} of~\IFLEX{}, due to~\eqref{def:eq:dir:superlin:F}. All other arguments remain unchanged. \qedhere
\end{proof}


\section{Numerical experiments}\label{sec:numerical}
In this section, we assess the performance of the proposed algorithms in \Cref{sec:algs} through a series of simulations on standard problems using both synthetic and real-world datasets. Code to replicate the experiments is made available online.\footnote{\url{https://github.com/manuupadhyaya/flex}} \Cref{tab:algorithms} contains a description of the algorithms used. 

\begin{table}[h]
    \centering
    \caption{Algorithms used in the numerical simulations (when applicable).}
    \label{tab:algorithms}
    \begin{tabular}{@{}l p{0.7\textwidth}@{}}
        \toprule
        \textbf{Method} & \textbf{Description} \\
        \midrule
        {\algnamefont EG}      
            & Extragradient method~\eqref{alg:EG} with \(\gamma = 0.9/L_{F}\). \\
        {\algnamefont EAG-C}   
            & Extra anchored gradient with constant step size \(\alpha = 1/(8L_{F})\)~\cite[Section 2.1]{yoon2021accelerated}. \\
        {\algnamefont GRAAL}   
            & Golden ratio algorithm with \(\phi = 2\) and \(\alpha = 0.999/L_{F}\)~\cite[Algorithm 2]{alacaoglu2023beyond},~\cite{malitsky2020goldenratioalgorithms}. \\
        {\algnamefont aGRAAL}  
            & Adaptive golden ratio algorithm with \(\phi = (2 + \sqrt{5})/2 \), \(\gamma = 1 / \phi + 1 /\phi^2\) and \(\alpha_0 = 0.1\)~\cite[Algorithm 1]{alacaoglu2023beyond},~\cite{malitsky2020goldenratioalgorithms}. \\
        {\algnamefont EG-AA}   
            & An extragradient-type method with type-II Anderson acceleration with memory \(m = 1\)~\cite[Algorithm 1]{qu2024extragradientanderson} using the parameter values described in~\cite[Section 4]{qu2024extragradientanderson}. \\
        {\algnamefont FISTA}   
            & Fast iterative shrinkage-thresholding algorithm with constant step size~\cite[Section 4]{Beck_2009}. \\
        \FLEX
            &\Cref{alg:FLEX} with \(\gamma = 0.9/L_F\), \(\beta = 0.3\), \(\sigma = 0.1\), \(\rho = 0.99\), and \(M=2\). \\
        \IFLEX
            &\Cref{alg:I-FLEX} with \(\beta = 0.01\) and \(\sigma = 0.1\). \\
        \ProxFLEX
            &\Cref{alg:Prox-FLEX} with \(\gamma = 0.9/L_F\), \(\beta = 0.3\), \(\sigma = 0.1\), \(\rho = 0.99\), and \(M=2\). \\
        \bottomrule
    \end{tabular}
\end{table}

In the numerical experiments for~\FLEX{},~\IFLEX{}, and~\ProxFLEX{}, we use directions \(\seq{d^{k}}\) based on 
quasi-Newton directions. 

\paragraph{Anderson acceleration.} 

The first set of quasi-Newton directions we use are the standard limited-memory type-I and type-II Anderson acceleration methods~\cite{Anderson1965, fang2009multisecant}. These directions are computed via~\eqref{eq:direction:Rgam}, i.e., \(d^k = - H_{k} R_\gamma(z^k)\), where \(H_{k}\) differs between the type-I and type-II variants. Both methods employ a memory parameter \(m \in \mathbb{N}\) and define \(m_k = \min\{m, k\}\). They also maintain two buffer matrices:
\[
Y_k = \begin{bmatrix}
y^{k - m_k}  & \cdots & y^{k-1}
\end{bmatrix}
\quad\text{and}\quad
S_k = \begin{bmatrix}
s^{k - m_k}  & \cdots & s^{k-1}
\end{bmatrix},
\]
where \(y^i = R_\gamma(z^{i+1}) - R_\gamma(z^i)\) and \(s^i = z^{i+1} - z^i\). For type-I Anderson acceleration (denoted {\algnamefont AA-I}), we have
\[
H_k = I + (S_k - Y_k)\,(S_k^{\top}Y_k)^{-1}\,S_k^{\top},
\]
whereas for type-II Anderson acceleration (denoted {\algnamefont AA-II}), we have
\[
H_k = I + (S_k - Y_k)\,(Y_k^{\top}Y_k)^{-1}\,Y_k^{\top}.
\]
Additional discussion can be found in~\cite{zhang2020globallyconvergenttype}.

\paragraph{J-symmetric directions.} 

We also incorporate directions derived from the J-symmetric quasi-Newton approach proposed in~\cite{asl2024jsymmetricquasi}, which is developed for unconstrained minimax problems. This method exploits the so-called J-symmetric structure of the Hessian in such problems, allowing a rank-2 update of the (inverse) Hessian estimate that naturally generalizes the classic Powell's symmetric Broyden method from standard minimization to minimax optimization. The formula for updating \(H_k\) in~\eqref{eq:direction:Rgam} can be found in~\cite[Proposition 2.2]{asl2024jsymmetricquasi}. We refer to this method as {\algnamefont J-sym}.

\subsection{Quadratic minimax problem}\label{sec:quadratic_minimax}
Consider the quadratic convex-concave minimax problem 
\begin{align}\label{eq:quadratic_minimax}
    \minimize_{x \in \reals^n}\; \maximize_{y \in \reals^n}\; \mathcal{L}\p{x,y}
\end{align}
for the saddle function \(\mathcal{L}:\reals^{n}\times\reals^{n}\to\reals\) such that 
\begin{align*}
    \mathcal{L}\p{x,y} = \tfrac{1}{2}\p{x - x^{\star}}^{\top} A \p{x - x^{\star}} + \p{x - x^{\star}}^{\top} C \p{y - y^{\star}} - \tfrac{1}{2}\p{y - y^{\star}}^{\top} B \p{y - y^{\star}}
\end{align*}
for each \(\p{x,y} \in \reals^{n}\times\reals^{n}\), where \(x^{\star},\,y^{\star} \in \reals^{n}\), \(A,\, B \in \sym_{+}^{n}\), and \(C\in\reals^{n\times n}\). A solution to the minimax problem~\eqref{eq:quadratic_minimax} can be obtained by solving an associated saddle point problem, which in turn can equivalently be written as~\eqref{eq:the_inclusion_problem} by letting \(\calH=\reals^{2n}\) with the inner product set to the dot product, \(g=0\), and \(F: \reals^{2n}\to\reals^{2n}\) as the monotone and \(L_F\)-Lipschitz continuous operator given by
\begin{align}\label{eq:quadratic_minimax:F}
    F(z) = 
    \begin{bmatrix}
        \nabla_{x} \mathcal{L}\p{x,y} \\
        -\nabla_{y} \mathcal{L}\p{x,y} 
    \end{bmatrix}
    =
    \begin{bmatrix}
        A \p{x - x^{\star}} + C\p{y - y^{\star}} \\
        B \p{y - y^{\star}} - C^{\top}\p{x - x^{\star}}
    \end{bmatrix}
\end{align}
for each \(z = (x,y)\in \reals^{n} \times \reals^{n}\), where\footnote{The matrix norm is taken as the spectral norm.}
\begin{align*}
    L_{F} = 
    \left\lVert 
    \begin{bmatrix}
        A &  C \\
        - C^{\top} & B
    \end{bmatrix}
    \right\rVert.
\end{align*}
We generate problem data as in \cite[Section 5.1]{asl2024jsymmetricquasi}, which is outlined below. The results of the numerical experiments are presented in \Cref{fig:qmp}. We see that~\FLEX{} and~\IFLEX{} do very well for small problems, while larger ones are more challenging. Nevertheless, the use of \texttt{AA-II} directions in our algorithms systematically performs at the top.

\begin{customalgorithm}{Datagen}[H]
    \begin{algorithmic}[1]
        \INPUT \(\QuadMinMaxParam \in \reals_{+}\) and \(n \in \mathbb{N}\)
        \OUTPUT \(x^{\star},\,y^{\star} \in \reals^{n}\), \(A,\, B \in \sym^{n}_{+}\) and \(C \in \reals^{n \times n}\)
        \State Let \(x^{\star},\,y^{\star} \in \reals^{n}\) such that \(x_{i}^{\star},\,y_{i}^{\star}\sim \mathcal{N}(0, 1)\) for each \(i\in\llbracket1,n\rrbracket\)
        \State Let \(S \in \mathbb{R}^{n \times n}\) such that \([S]_{i,j} \sim \mathcal{N}(0, 1/\sqrt{n})\) for each \(i,j\in\llbracket1,n\rrbracket\)
        \State \(S \leftarrow \p{S + S^\top}/2\)
        \State \(S \leftarrow S + \p{\abs{\lambda_{\min}(S)} + 1} I\)
        \State \(A \leftarrow \QuadMinMaxParam S\)
        \State Repeat steps 2-4 with a different random seed and let \(B \leftarrow \QuadMinMaxParam S\)
        \State Repeat step 2 with a different random seed and let \(C \leftarrow S\)
    \end{algorithmic}
\end{customalgorithm}

\begin{figure}[t]
    \centering
    
    \begin{subfigure}[b]{0.49\textwidth}
        \centering
        \caption{\(n=500\) and \(\QuadMinMaxParam = 0\)}
        \begin{tikzpicture}
            \begin{semilogyaxis}[
                width=\textwidth,
                height=0.8\textwidth, 
                ylabel={\(\norm{F\p{z^{k}}}\)},
                legend to name=commonLegend,
                legend style={
                    legend columns=6,        
                    cells={anchor=west},
                    font=\tiny,
                },
                grid=both,
                grid style={dashed, gray!30},
                ticklabel style={font=\tiny},
                label style={font=\tiny},
                xmin=0,
                xmax=200000,
                ymin=0.0005,
                ymax=10^(1)
            ]
            \addplot+[
                color=color1,
                mark=none,
                line width=1.5pt,
            ] table[
                x=numFevals,
                y=performance,
                col sep=comma,
            ] {plot_data/quadratic_minimax_problem_n500_alpha0/eg.tex};
            \addlegendentry{\algnamefont EG}

            \addplot+[
                color=color5,
                mark=none,
                line width=1.5pt,
            ] table[
                x=numFevals,
                y=performance,
                col sep=comma,
            ] {plot_data/quadratic_minimax_problem_n500_alpha0/eag-c.tex};
            \addlegendentry{\algnamefont EAG-C}

            \addplot+[
                color=color2,
                mark=none,
                line width=1.5pt,
            ] table[
                x=numFevals,
                y=performance,
                col sep=comma,
            ] {plot_data/quadratic_minimax_problem_n500_alpha0/graal.tex};
            \addlegendentry{\algnamefont GRAAL}

            \addplot+[
                color=color7,
                mark=none,
                line width=1.5pt,
            ] table[
                x=numFevals,
                y=performance,
                col sep=comma,
            ] {plot_data/quadratic_minimax_problem_n500_alpha0/agraal.tex};
            \addlegendentry{\algnamefont aGRAAL}

            \addplot+[
                color=color3,
                mark=none,
                line width=1.5pt,
                dashed,
            ] table[
                x=numFevals,
                y=performance,
                col sep=comma,
            ] {plot_data/quadratic_minimax_problem_n500_alpha0/eg-aa.tex};
            \addlegendentry{\algnamefont EG-AA}

            \addplot+[
                color=color8,
                mark=none,
                line width=1.5pt,
                dashed,
            ] table[
                x=numFevals,
                y=performance,
                col sep=comma,
            ] {plot_data/quadratic_minimax_problem_n500_alpha0/flex-aai.tex};
            \addlegendentry{\algnamefont \FLEX: AA-I}

            \addplot+[
                color=color10,
                mark=none,
                line width=1.5pt,
                dashed,
            ] table[
                x=numFevals,
                y=performance,
                col sep=comma,
            ] {plot_data/quadratic_minimax_problem_n500_alpha0/flex-aaii.tex};
            \addlegendentry{\algnamefont \FLEX: AA-II}

            \addplot+[
                color=color3,
                mark=none,
                line width=1.5pt,
                dashed,
            ] table[
                x=numFevals,
                y=performance,
                col sep=comma,
            ] {plot_data/quadratic_minimax_problem_n500_alpha0/flex-jsym.tex};
            \addlegendentry{\algnamefont \FLEX: J-sym}

            \end{semilogyaxis}
        \end{tikzpicture}
    \end{subfigure}
    \hfill
    \begin{subfigure}[b]{0.49\textwidth}
        \centering
        \caption{\(n=500\) and \(\QuadMinMaxParam = 0.0001\)}
        \begin{tikzpicture}
            \begin{semilogyaxis}[
                width=\textwidth,
                height=0.8\textwidth,
                legend to name=commonLegend,
                legend style={
                    legend columns=6,
                    cells={anchor=west},
                    font=\tiny,
                },
                grid=both,
                grid style={dashed, gray!30},
                ticklabel style={font=\tiny},
                label style={font=\tiny},
                xmin=0,
                xmax=2*10^5,
                ymin=10^(-7),
                ymax=10^2,
            ]
            \addplot+[
                color=color1,
                mark=none,
                line width=1.5pt,
            ] table[
                x=numFevals,
                y=performance,
                col sep=comma,
            ] {plot_data/quadratic_minimax_problem_n500_alpha00001/eg.tex};
            \addlegendentry{\algnamefont EG}

            \addplot+[
                color=color5,
                mark=none,
                line width=1.5pt,
            ] table[
                x=numFevals,
                y=performance,
                col sep=comma,
            ] {plot_data/quadratic_minimax_problem_n500_alpha00001/eag-c.tex};
            \addlegendentry{\algnamefont EAG-C}

            \addplot+[
                color=color2,
                mark=none,
                line width=1.5pt,
            ] table[
                x=numFevals,
                y=performance,
                col sep=comma,
            ] {plot_data/quadratic_minimax_problem_n500_alpha00001/graal.tex};
            \addlegendentry{\algnamefont GRAAL}

            \addplot+[
                color=color7,
                mark=none,
                line width=1.5pt,
            ] table[
                x=numFevals,
                y=performance,
                col sep=comma,
            ] {plot_data/quadratic_minimax_problem_n500_alpha00001/agraal.tex};
            \addlegendentry{\algnamefont aGRAAL}

            \addplot+[
                color=color3,
                mark=none,
                line width=1.5pt,
                dashed,
            ] table[
                x=numFevals,
                y=performance,
                col sep=comma,
            ] {plot_data/quadratic_minimax_problem_n500_alpha00001/eg-aa.tex};
            \addlegendentry{\algnamefont EG-AA}

            \addplot+[
                color=color8,
                mark=none,
                line width=1.5pt,
                dashed,
            ] table[
                x=numFevals,
                y=performance,
                col sep=comma,
            ] {plot_data/quadratic_minimax_problem_n500_alpha00001/flex-aai.tex};
            \addlegendentry{\algnamefont \FLEX: AA-I}

            \addplot+[
                color=color10,
                mark=none,
                line width=1.5pt,
                dashed,
            ] table[
                x=numFevals,
                y=performance,
                col sep=comma,
            ] {plot_data/quadratic_minimax_problem_n500_alpha00001/flex-aaii.tex};
            \addlegendentry{\algnamefont \FLEX: AA-II}

            \addplot+[
                color=color3,
                mark=none,
                line width=1.5pt,
                dashed,
            ] table[
                x=numFevals,
                y=performance,
                col sep=comma,
            ] {plot_data/quadratic_minimax_problem_n500_alpha00001/flex-jsym.tex};
            \addlegendentry{\algnamefont \FLEX: J-sym}

            \addplot+[
                color=color5,
                mark=none,
                line width=1.5pt,
                dashed,
            ] table[
                x=numFevals,
                y=performance,
                col sep=comma,
            ] {plot_data/quadratic_minimax_problem_n500_alpha00001/iflex-aai.tex};
            \addlegendentry{\algnamefont \IFLEX: AA-I}

            \addplot+[
                color=color2,
                mark=none,
                line width=1.5pt,
                dashed,
            ] table[
                x=numFevals,
                y=performance,
                col sep=comma,
            ] {plot_data/quadratic_minimax_problem_n500_alpha00001/iflex-aaii.tex};
            \addlegendentry{\algnamefont \IFLEX: AA-II}

            \addplot+[
                color=color7,
                mark=none,
                line width=1.5pt,
                dashed,
            ] table[
                x=numFevals,
                y=performance,
                col sep=comma,
            ] {plot_data/quadratic_minimax_problem_n500_alpha00001/iflex-jsym.tex};
            \addlegendentry{\algnamefont \IFLEX: J-sym}
            
            \end{semilogyaxis}
        \end{tikzpicture}
    \end{subfigure}

    \begin{subfigure}[b]{0.49\textwidth}
        \centering
        \caption{\(n=20\) and \(\QuadMinMaxParam = 0\)}
        \begin{tikzpicture}
            \begin{semilogyaxis}[
                width=\textwidth,
                height=0.8\textwidth, 
                xlabel={Number of \(F\) evaluations},
                ylabel={\(\norm{F\p{z^{k}}}\)},
                legend to name=commonLegend,
                legend style={
                    legend columns=6,        
                    cells={anchor=west},
                    font=\tiny,
                },
                grid=both,
                grid style={dashed, gray!30},
                ticklabel style={font=\tiny},
                label style={font=\tiny},
                xmin=0,
                xmax=50000,
                ymin=10^(-8),
                ymax=10,
            ]
            \addplot+[
                color=color1,
                mark=none,
                line width=1.5pt,
            ] table[
                x=numFevals,
                y=performance,
                col sep=comma,
            ] {plot_data/quadratic_minimax_problem_n20_alpha0/eg.tex};
            \addlegendentry{\algnamefont EG}

            \addplot+[
                color=color5,
                mark=none,
                line width=1.5pt,
            ] table[
                x=numFevals,
                y=performance,
                col sep=comma,
            ] {plot_data/quadratic_minimax_problem_n20_alpha0/eag-c.tex};
            \addlegendentry{\algnamefont EAG-C}

            \addplot+[
                color=color2,
                mark=none,
                line width=1.5pt,
            ] table[
                x=numFevals,
                y=performance,
                col sep=comma,
            ] {plot_data/quadratic_minimax_problem_n20_alpha0/graal.tex};
            \addlegendentry{\algnamefont GRAAL}

            \addplot+[
                color=color7,
                mark=none,
                line width=1.5pt,
            ] table[
                x=numFevals,
                y=performance,
                col sep=comma,
            ] {plot_data/quadratic_minimax_problem_n20_alpha0/agraal.tex};
            \addlegendentry{\algnamefont aGRAAL}

            \addplot+[
                color=color3,
                mark=none,
                line width=1.5pt,
                dashed,
            ] table[
                x=numFevals,
                y=performance,
                col sep=comma,
            ] {plot_data/quadratic_minimax_problem_n20_alpha0/eg-aa.tex};
            \addlegendentry{\algnamefont EG-AA}

            \addplot+[
                color=color8,
                mark=none,
                line width=1.5pt,
                dashed,
            ] table[
                x=numFevals,
                y=performance,
                col sep=comma,
            ] {plot_data/quadratic_minimax_problem_n20_alpha0/flex-aai.tex};
            \addlegendentry{\algnamefont \FLEX: AA-I}

            \addplot+[
                color=color10,
                mark=none,
                line width=1.5pt,
                dashed,
            ] table[
                x=numFevals,
                y=performance,
                col sep=comma,
            ] {plot_data/quadratic_minimax_problem_n20_alpha0/flex-aaii.tex};
            \addlegendentry{\algnamefont \FLEX: AA-II}

            \addplot+[
                color=color3,
                mark=none,
                line width=1.5pt,
                dashed,
            ] table[
                x=numFevals,
                y=performance,
                col sep=comma,
            ] {plot_data/quadratic_minimax_problem_n20_alpha0/flex-jsym.tex};
            \addlegendentry{\algnamefont \FLEX: J-sym}

            \end{semilogyaxis}
        \end{tikzpicture}
    \end{subfigure}
    \hfill
    \begin{subfigure}[b]{0.49\textwidth}
        \centering
        \caption{\(n=20\) and \(\QuadMinMaxParam = 0.0001\)}
        \begin{tikzpicture}
            \begin{semilogyaxis}[
                width=\textwidth,
                height=0.8\textwidth,
                xlabel={Number of \(F\) evaluations},
                legend to name=commonLegend,
                legend style={
                    legend columns=6,
                    cells={anchor=west},
                    font=\tiny,
                },
                grid=both,
                grid style={dashed, gray!30},
                ticklabel style={font=\tiny},
                label style={font=\tiny},
                xmin=0,
                xmax=50000,
                ymin=10^(-8),
                ymax=10,
            ]
            \addplot+[
                color=color1,
                mark=none,
                line width=1.5pt,
            ] table[
                x=numFevals,
                y=performance,
                col sep=comma,
            ] {plot_data/quadratic_minimax_problem_n20_alpha00001/eg.tex};
            \addlegendentry{\algnamefont EG}

            \addplot+[
                color=color5,
                mark=none,
                line width=1.5pt,
            ] table[
                x=numFevals,
                y=performance,
                col sep=comma,
            ] {plot_data/quadratic_minimax_problem_n20_alpha00001/eag-c.tex};
            \addlegendentry{\algnamefont EAG-C}

            \addplot+[
                color=color2,
                mark=none,
                line width=1.5pt,
            ] table[
                x=numFevals,
                y=performance,
                col sep=comma,
            ] {plot_data/quadratic_minimax_problem_n20_alpha00001/graal.tex};
            \addlegendentry{\algnamefont GRAAL}

            \addplot+[
                color=color7,
                mark=none,
                line width=1.5pt,
            ] table[
                x=numFevals,
                y=performance,
                col sep=comma,
            ] {plot_data/quadratic_minimax_problem_n20_alpha00001/agraal.tex};
            \addlegendentry{\algnamefont aGRAAL}

            \addplot+[
                color=color3,
                mark=none,
                line width=1.5pt,
                dashed,
            ] table[
                x=numFevals,
                y=performance,
                col sep=comma,
            ] {plot_data/quadratic_minimax_problem_n20_alpha00001/eg-aa.tex};
            \addlegendentry{\algnamefont EG-AA}

            \addplot+[
                color=color8,
                mark=none,
                line width=1.5pt,
                dashed,
            ] table[
                x=numFevals,
                y=performance,
                col sep=comma,
            ] {plot_data/quadratic_minimax_problem_n20_alpha00001/flex-aai.tex};
            \addlegendentry{\algnamefont \FLEX: AA-I}

            \addplot+[
                color=color10,
                mark=none,
                line width=1.5pt,
                dashed,
            ] table[
                x=numFevals,
                y=performance,
                col sep=comma,
            ] {plot_data/quadratic_minimax_problem_n20_alpha00001/flex-aaii.tex};
            \addlegendentry{\algnamefont \FLEX: AA-II}

            \addplot+[
                color=color3,
                mark=none,
                line width=1.5pt,
                dashed,
            ] table[
                x=numFevals,
                y=performance,
                col sep=comma,
            ] {plot_data/quadratic_minimax_problem_n20_alpha00001/flex-jsym.tex};
            \addlegendentry{\algnamefont \FLEX: J-sym}

            \addplot+[
                color=color5,
                mark=none,
                line width=1.5pt,
                dashed,
            ] table[
                x=numFevals,
                y=performance,
                col sep=comma,
            ] {plot_data/quadratic_minimax_problem_n20_alpha00001/iflex-aai.tex};
            \addlegendentry{\algnamefont \IFLEX: AA-I}

            \addplot+[
                color=color2,
                mark=none,
                line width=1.5pt,
                dashed,
            ] table[
                x=numFevals,
                y=performance,
                col sep=comma,
            ] {plot_data/quadratic_minimax_problem_n20_alpha00001/iflex-aaii.tex};
            \addlegendentry{\algnamefont \IFLEX: AA-II}

            \addplot+[
                color=color7,
                mark=none,
                line width=1.5pt,
                dashed,
            ] table[
                x=numFevals,
                y=performance,
                col sep=comma,
            ] {plot_data/quadratic_minimax_problem_n20_alpha00001/iflex-jsym.tex};
            \addlegendentry{\algnamefont \IFLEX: J-sym}
            
            \end{semilogyaxis}
        \end{tikzpicture}
    \end{subfigure}

    \vspace{1em}
    \centering
    \pgfplotslegendfromname{commonLegend}
    \vspace{0.5em}

    \caption{Convergence of algorithms on the quadratic minimax problem~\eqref{eq:quadratic_minimax}. Both {\algnamefont AA-I} and {\algnamefont AA-II} use memory parameter \(m=20\). \new{When \(\QuadMinMaxParam = 0\), the operator \(F\) in \eqref{eq:quadratic_minimax:F} is monotone; for \(\QuadMinMaxParam > 0\), it becomes strongly monotone.}}

    \label{fig:qmp}

\end{figure}

\subsection{Bilinear zero-sum game with simplex constraints}\label{sec:bilinear}
Consider the bilinear zero-sum game with simplex constraints given by
\begin{align}\label{eq:bilinear}
    \minimize_{x \in \Delta^n}\; \maximize_{y \in \Delta^n}\; x^{\top} A y
\end{align}
where \(A\in\reals^{n\times n}\) is the payoff matrix and \(\Delta^{n}=\set{w\in\reals_{+}^n\mid w^{\top}\mathbf{1} = 1 }\) is the probability simplex in \(\reals^n\), which is equivalent to finding a saddle point \(\p{x^{\star},y^{\star}}\in\Delta^{n}\times\Delta^{n}\) (which is guaranteed to exist), i.e.,
\begin{align*}
     \p{x^{\star}}^{\top} A y \leq \p{x^{\star}}^{\top} A y^{\star} \leq x^{\top} A y^{\star} 
\end{align*}
for each \(\p{x,y} \in \Delta^{n}\times\Delta^{n}\). This, in turn, is equivalent to solving~\eqref{eq:the_inclusion_problem} by letting \(\calH=\reals^{2n}\) with the inner product set to the dot product, \(g=\delta_{\Delta^{n}\times\Delta^{n}}\), and \(F: \reals^{2n}\to\reals^{2n}\) as the monotone and \(L_F\)-Lipschitz continuous operator given by
\begin{align*}
    F(z) = 
    \begin{bmatrix}
        Ay \\
        - A^{\top}x
    \end{bmatrix}
\end{align*}
for each \(z = (x,y)\in \reals^{n} \times \reals^{n}\), where
\begin{align*}
    L_{F} = 
    \left\lVert 
    \begin{bmatrix}
        0 &  A \\
        - A^{\top} & 0
    \end{bmatrix}
    \right\rVert.
\end{align*}

We generate \(A = S - S^{\top}\) for a random matrix \(S\in\reals^{n\times n}\) such that \([S]_{i,j} \sim \mathcal{N}(0, 1)\) for each \(i,j \in \llbracket 1, n \rrbracket\), resulting in a skew-symmetric matrix \(A\). The results of the numerical experiments are presented in \Cref{fig:bmg}. We see that using \texttt{AA-II} directions in~\ProxFLEX{} gives good performance.

\begin{figure}[t]
    \centering
    \begin{subfigure}[c]{0.49\textwidth}
        \centering
        \caption{\(n=250\)}
        \label{fig:bmg:250}
        \begin{tikzpicture}
            \begin{semilogyaxis}[
                width=\textwidth,
                height=0.8\textwidth,
                xlabel={Number of operator evaluations},
                ylabel={\(\norm{r^{k}}\)},
                y label style={rotate=-90},
                legend to name=commonLegend,
                legend style={
                    legend columns=6,
                    cells={anchor=west},
                    font=\tiny,
                },
                grid=both,
                grid style={dashed, gray!30},
                ticklabel style={font=\tiny},
                label style={font=\tiny},
                xmin=0,
                xmax=4*10^5,
                ymin=10^(-10),
            ]
            \addplot+[
                color=color1,
                mark=none,
                line width=1.5pt,
            ] table[
                x=numoperatorevals,
                y=performance,
                col sep=comma,
            ] {plot_data/bilinear_matrix_game_n250/eg.tex};
            \addlegendentry{\algnamefont EG}
    
            \addplot+[
                color=color2,
                mark=none,
                line width=1.5pt,
            ] table[
                x=numoperatorevals,
                y=performance,
                col sep=comma,
            ] {plot_data/bilinear_matrix_game_n250/graal.tex};
            \addlegendentry{\algnamefont GRAAL}
            
            \addplot+[
                color=color7,
                mark=none,
                line width=1.5pt,
            ] table[
                x=numoperatorevals,
                y=performance,
                col sep=comma,
            ] {plot_data/bilinear_matrix_game_n250/agraal.tex};
            \addlegendentry{\algnamefont aGRAAL}
    
            \addplot+[
                color=color3,
                mark=none,
                line width=1.5pt,
                dashed,%
            ] table[
                x=numoperatorevals,
                y=performance,
                col sep=comma,
            ] {plot_data/bilinear_matrix_game_n250/eg-aa.tex};
            \addlegendentry{\algnamefont EG-AA}
            
            \addplot+[
                color=color8,
                mark=none,
                line width=1.5pt,
                dashed,
            ] table[
                x=numoperatorevals,
                y=performance,
                col sep=comma,
            ] {plot_data/bilinear_matrix_game_n250/proxflex-aai.tex};
            \addlegendentry{\algnamefont \ProxFLEX: AA-I}
    
            \addplot+[
                color=color10,
                mark=none,
                line width=1.5pt,
                dashed,
            ] table[
                x=numoperatorevals,
                y=performance,
                col sep=comma,
            ] {plot_data/bilinear_matrix_game_n250/proxflex-aaii.tex};
            \addlegendentry{\algnamefont \ProxFLEX: AA-II}
            
            \end{semilogyaxis}
        \end{tikzpicture}
    \end{subfigure}
    \hfill
    \begin{subfigure}[c]{0.49\textwidth}
        \centering
        \caption{\(n=500\)}
        \label{fig:bmg:500}
        \begin{tikzpicture}
            \begin{semilogyaxis}[
                width=\textwidth,
                height=0.8\textwidth,
                xlabel={Number of operator evaluations},
                y label style={rotate=-90},
                legend to name=commonLegend,
                legend style={
                    legend columns=6,
                    cells={anchor=west},
                    font=\tiny,
                },
                grid=both,
                grid style={dashed, gray!30},
                ticklabel style={font=\tiny},
                label style={font=\tiny},
                xmin=0,
                xmax=4*10^5,
                ymin=10^(-8),
            ]
            \addplot+[
                color=color1,
                mark=none,
                line width=1.5pt,
            ] table[
                x=numoperatorevals,
                y=performance,
                col sep=comma,
            ] {plot_data/bilinear_matrix_game_n500/eg.tex};
            \addlegendentry{\algnamefont EG}
    
            \addplot+[
                color=color2,
                mark=none,
                line width=1.5pt,
            ] table[
                x=numoperatorevals,
                y=performance,
                col sep=comma,
            ] {plot_data/bilinear_matrix_game_n500/graal.tex};
            \addlegendentry{\algnamefont GRAAL}
            
            \addplot+[
                color=color7,
                mark=none,
                line width=1.5pt,
            ] table[
                x=numoperatorevals,
                y=performance,
                col sep=comma,
            ] {plot_data/bilinear_matrix_game_n500/agraal.tex};
            \addlegendentry{\algnamefont aGRAAL}
    
            \addplot+[
                color=color3,
                mark=none,
                line width=1.5pt,
                dashed,%
            ] table[
                x=numoperatorevals,
                y=performance,
                col sep=comma,
            ] {plot_data/bilinear_matrix_game_n500/eg-aa.tex};
            \addlegendentry{\algnamefont EG-AA}
            
            \addplot+[
                color=color8,
                mark=none,
                line width=1.5pt,
                dashed,
            ] table[
                x=numoperatorevals,
                y=performance,
                col sep=comma,
            ] {plot_data/bilinear_matrix_game_n500/proxflex-aai.tex};
            \addlegendentry{\algnamefont \ProxFLEX: AA-I}
    
            \addplot+[
                color=color10,
                mark=none,
                line width=1.5pt,
                dashed,
            ] table[
                x=numoperatorevals,
                y=performance,
                col sep=comma,
            ] {plot_data/bilinear_matrix_game_n500/proxflex-aaii.tex};
            \addlegendentry{\algnamefont \ProxFLEX: AA-II}
            
            \end{semilogyaxis}
        \end{tikzpicture}
    \end{subfigure}

    \vspace{1em}
    \centering
    \pgfplotslegendfromname{commonLegend}
    \vspace{0.5em}
    
    \caption{Convergence of algorithms on the bilinear zero-sum game with simplex constraints~\eqref{eq:bilinear} where \(r^{k} = R_{1/2L_F}(z^k)\) and \(R\) is the residual mapping in~\eqref{eq:direction:Rgam}. Both {\algnamefont AA-I} and {\algnamefont AA-II} use memory parameter \(m=10\) for \Cref{fig:bmg:250} and \(m=20\) for \Cref{fig:bmg:500}. The number of operator evaluations equals the number of \(F\) and \(\prox_{\gamma g}\) evaluations.}
    \label{fig:bmg}
\end{figure}
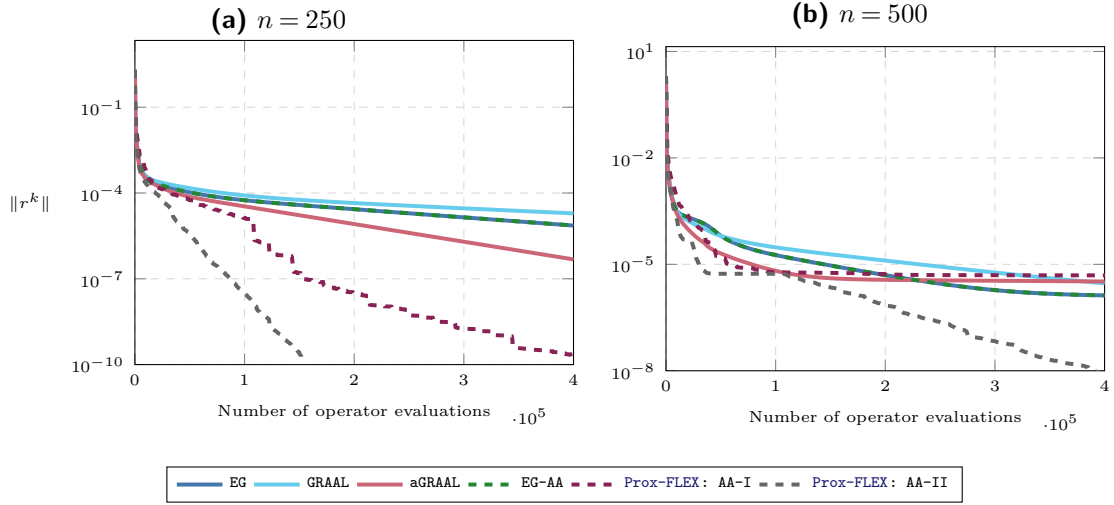

\subsection{Cournot--Nash equilibrium problem}\label{sec:equilibrium}
Consider a noncooperative game with \(n\in\mathbb{N}\) players, in which each player \(i \in \llbracket1,n\rrbracket\) has to pick a strategy \(z_{i}\) that lies in \(\mathcal{Z}_{i}\), a subset of a real Hilbert space \(\calH_i\), and has an associated loss function \(\varphi_{i}:\calH \to \reals\), where \(\calH=\prod_{j=1}^{n}\calH_j\).  In this case, a pure strategy Nash equilibrium is a strategy profile \(z = \p{z_{1},\ldots, z_{n}}\in\calH\) that solves the problem
\begin{align}\label{eq:Nash}
    \text{find}\  z\in\calH\ \text{ such that }\ z_{i} \in \Argmin_{x \in \mathcal{Z}_{i}} \varphi_{i}\p{x;z_{\smallsetminus i}} \ \text{ for each }\ i \in \llbracket1,n\rrbracket,
\end{align}
where we have used the notation \(\p{x;z_{\smallsetminus i}} = \p{z_{1},\ldots, z_{i-1}, x, z_{i+1},\ldots,z_{n}}\) for each \(x\in\calH_{i}\) and \(i \in \llbracket1,n\rrbracket\). In particular, assume that, for each \(i \in \llbracket1,n\rrbracket\), the function \(\varphi_{i}(\,\cdot\,;z_{\smallsetminus i}):\calH_{i} \to \reals\) is convex for each \(z\in\calH\), the gradient \(\nabla_{z_{i}}\varphi_{i}:\calH \to \calH_{i}\) exists and is Lipschitz continuous, and the set \(\mathcal{Z}_{i}\subseteq\calH_{i}\) is nonempty, closed and convex. Then~\eqref{eq:Nash} can equivalently be written as~\eqref{eq:the_inclusion_problem} by letting \(F:\calH \to \calH : z \mapsto \p{\nabla_{z_{i}}\varphi_{i}\p{z}}_{i=1}^{n}\) and \(g=\delta_{\mathcal{Z}}\), where \(\mathcal{Z} = \prod_{i=1}^{n}\mathcal{Z}_{i}\), and it is straightforward to verify that \Cref{ass:main} holds.

Let us further specialize the model to the Cournot--Nash equilibrium problem for oligopolistic markets with concave-quadratic cost functions and a differentiated commodity, as presented in~\cite{Bigi_Passacantando_2017}. Such models are useful for policymakers and economists in analyzing market outcomes, assessing welfare effects, and evaluating the impact of various market interventions~\cite{Bonanno_1990,fudenberg1986dynamic,von_Mouche_Quartieri_2016,Muu2008,Okuguchi_Szidarovszky_1999,Vives_1989}. In particular, in the model of~\cite{Bigi_Passacantando_2017}, each producer \(i\in\llbracket1,n\rrbracket\) chooses to produce and supply a quantity \(z_{i} \in [0,T_{i}]\) of a differentiated commodity at a cost \(c_{i}:\reals\to\reals\) such that 
\begin{align*}
    c_{i}\p{z_{i}} = a_{i} z_{i}^{2} + b_{i} z_{i},
\end{align*}
for each \(z_{i}\in \reals\), where \(T_{i}>0\) denotes the maximum capacity of production, and \(a_{i}<0\) and \(b_{i}>0\) are numbers such that \(b_{i} \geq -2T_{i}a_{i}\), ensuring that \(c_{i}\) is increasing on \([0,T_{i}]\). Moreover, each producer \(i\in\llbracket1,n\rrbracket\) has a price per produced unit of the differentiated commodity\footnote{Also known as the inverse demand function.}, denoted by \(p_{i}:\reals^{n}\to\reals\), that also depends on the other producers' supply, and is modeled by 
\begin{align*}
    p_{i}\p{z} = m_{i} - d_{i} \sum_{j=1}^{n}z_{j}
\end{align*}
for each \(z = \p{z_{1},\ldots,z_{n}}\in\reals^{n}\), for some \(m_{i} > b_{i}\) and \(d_{i}> - a_{i}\), where the last two assumptions guarantee a positive profit in a monopolistic setting, i.e., when \(n=1\). Thus, given that the goal of each producer is to maximize profit, or equivalently minimize losses, an equilibrium state where no producer has any incentive to deviate unidirectionally from its production plan can be modeled by~\eqref{eq:Nash}, with  \(\mathcal{Z}_{i} = [0, T_{i}]\), \(\calH_{i} = \reals\), and
\begin{align*}
    \varphi_{i}\p{z} &= c_{i}\p{z_{i}} - z_{i} p\p{z} 
\end{align*}
for each \(z = \p{z_{1},\ldots,z_{n}}\in\reals^{n}\) and \(i\in\llbracket1,n\rrbracket\), which fulfill the assumptions in the first paragraph of this section. We identify \(F\) as 
\begin{align*}
    F\p{z} = 
    \underbracket{
    \begin{bmatrix}
        2\p{a_{1} + d_{1}}  & d_{1}             & d_{1}     & \cdots & d_{1} \\ 
        d_{2}               & 2\p{a_{2} + d_{2}}& d_{2} & \cdots & d_{2} \\
        d_{3} & d_{3} & 2\p{a_{3} + d_{3}} & \cdots & d_{3} \\
        \vdots & \vdots & \vdots & \ddots & \vdots \\
        d_{n} & d_{n} & d_{n} & \cdots & 2\p{a_{n} + d_{n}}
    \end{bmatrix} 
    }_{= A}
    z 
    +
    \begin{bmatrix}
        b_{1} - m_{1} \\
        \vdots \\
        b_{n} - m_{n} 
    \end{bmatrix} 
\end{align*}
with Lipschitz constant \(L_{F} = \norm{A}\). We also note that~\cite{Rosen_1965} provides the existence of a solution in this case. We generate the data similar to the approach in~\cite[Section 4.1]{Bigi_Passacantando_2017}, as outlined below. The results of the numerical experiments are presented in \Cref{fig:nash}. Although \(n=100\) in \Cref{fig:nash:100} is not representative of a real oligopolistic market, we include this larger problem size to evaluate the performance and scalability of the algorithms. We observe that~\ProxFLEX{} has a superlinear drop-off in both cases and that \texttt{EG-AA} and \texttt{aGRAAL} scale well for this particular problem.  

\begin{customalgorithm}{Datagen}[H]
    \begin{algorithmic}[1] 
        \INPUT \(n \in \mathbb{N}\) 
        \OUTPUT \(\p{\p{T_{i}, a_{i}, b_{i}, m_{i}, d_{i}}}_{i=1}^{n}\) 
        \Repeat 
            \State For each \(i \in \llbracket1,n\rrbracket\), sample \(m_{i}\) uniformly from \([150, 250]\) 
            \State For each \(i \in \llbracket1,n\rrbracket\), sample \(b_{i}\) uniformly from \([30, 50]\) 
            \State For each \(i \in \llbracket1,n\rrbracket\), sample \(T_{i}\) uniformly from \([3, 7]\) 
            \State For each \(i \in \llbracket1,n\rrbracket\), sample \(d_{i}\) uniformly from \([5, 20]\) 
            \State Sort \(\p{d_{i}}_{i=1}^{n}\) in increasing order 
            \State For each \(i \in \llbracket1,n\rrbracket\), sample \(u_{i}\) uniformly from \([-10, -5]\) 
            \State For each \(i \in \llbracket1,n\rrbracket\), compute \(a_{i} = d_{i} / u_{i}\) 
            \State Sort \(\p{a_{i}}_{i=1}^{n}\) in decreasing order 
            \State \(\text{valid} \leftarrow \texttt{True}\) 
            \For{\(i \in \llbracket1,n\rrbracket\)} 
                \If{\(b_{i} < -2 a_{i} T_{i}\) \textbf{or} \(m_{i} \leq b_{i}\) \textbf{or} \(d_{i} \leq - a_{i}\) } 
                    \State \(\text{valid} \leftarrow \texttt{False}\) 
                    \State \textbf{break} 
                \EndIf 
            \EndFor 
        \Until{\(\text{valid}\) is \texttt{True}} 
    \end{algorithmic}
\end{customalgorithm}

\begin{figure}[t]
    \centering
    
    \begin{subfigure}[c]{0.49\textwidth}
        \centering
        \caption{\(n=10\) }
        \label{fig:nash:10}
        \begin{tikzpicture}
            \begin{semilogyaxis}[
                width=\textwidth,
                height=0.8\textwidth, 
                xlabel={Number of operator evaluations},
                ylabel={\(\norm{r^{k}}\)},
                y label style={rotate=-90},
                legend to name=commonLegend,
                legend style={
                    legend columns=6,
                    cells={anchor=west},
                    font=\tiny,
                },
                grid=both,
                grid style={dashed, gray!30},
                ticklabel style={font=\tiny},
                label style={font=\tiny},
                xmin=0,
                xmax=1400,
                ymin=10^(-10),
            ]
            \addplot+[
                color=color1,
                mark=none,
                line width=1.5pt,
            ] table[
                x=numoperatorevals,
                y=performance,
                col sep=comma,
            ] {plot_data/cournot-nash_equilibrium_problem_n10/eg.tex};
            \addlegendentry{\algnamefont EG}

            \addplot+[
                color=color2,
                mark=none,
                line width=1.5pt,
            ] table[
                x=numoperatorevals,
                y=performance,
                col sep=comma,
            ] {plot_data/cournot-nash_equilibrium_problem_n10/graal.tex};
            \addlegendentry{\algnamefont GRAAL}
            
            \addplot+[
                color=color7,
                mark=none,
                line width=1.5pt,
            ] table[
                x=numoperatorevals,
                y=performance,
                col sep=comma,
            ] {plot_data/cournot-nash_equilibrium_problem_n10/agraal.tex};
            \addlegendentry{\algnamefont aGRAAL}

            \addplot+[
                color=color3,
                mark=none,
                line width=1.5pt,
            ] table[
                x=numoperatorevals,
                y=performance,
                col sep=comma,
            ] {plot_data/cournot-nash_equilibrium_problem_n10/eg-aa.tex};
            \addlegendentry{\algnamefont EG-AA}
            
            \addplot+[
                color=color8,
                mark=none,
                line width=1.5pt,
                dashed,
            ] table[
                x=numoperatorevals,
                y=performance,
                col sep=comma,
            ] {plot_data/cournot-nash_equilibrium_problem_n10/proxflex-aai.tex};
            \addlegendentry{\algnamefont \ProxFLEX: AA-I}

            \addplot+[
                color=color10,
                mark=none,
                line width=1.5pt,
                dashed,
            ] table[
                x=numoperatorevals,
                y=performance,
                col sep=comma,
            ] {plot_data/cournot-nash_equilibrium_problem_n10/proxflex-aaii.tex};
            \addlegendentry{\algnamefont \ProxFLEX: AA-II}
            
            \end{semilogyaxis}
        \end{tikzpicture}
    \end{subfigure}
    \hfill
    \begin{subfigure}[c]{0.49\textwidth}
        \centering
        \caption{\(n=100\)}
        \label{fig:nash:100}
        \begin{tikzpicture}
            \begin{semilogyaxis}[
                width=\textwidth,
                height=0.8\textwidth,
                xlabel={Number of operator evaluations},
                legend to name=commonLegend,
                legend style={
                    legend columns=6,
                    cells={anchor=west},
                    font=\tiny,
                },
                grid=both,
                grid style={dashed, gray!30},
                ticklabel style={font=\tiny},
                label style={font=\tiny},
                xmin=0,
                xmax=4000,
                ymin=10^(-10),
            ]
            \addplot+[
                color=color1,
                mark=none,
                line width=1.5pt,
            ] table[
                x=numoperatorevals,
                y=performance,
                col sep=comma,
            ] {plot_data/cournot-nash_equilibrium_problem_n100/eg.tex};
            \addlegendentry{\algnamefont EG}

            \addplot+[
                color=color2,
                mark=none,
                line width=1.5pt,
            ] table[
                x=numoperatorevals,
                y=performance,
                col sep=comma,
            ] {plot_data/cournot-nash_equilibrium_problem_n100/graal.tex};
            \addlegendentry{\algnamefont GRAAL}
            
            \addplot+[
                color=color7,
                mark=none,
                line width=1.5pt,
            ] table[
                x=numoperatorevals,
                y=performance,
                col sep=comma,
            ] {plot_data/cournot-nash_equilibrium_problem_n100/agraal.tex};
            \addlegendentry{\algnamefont aGRAAL}

            \addplot+[
                color=color3,
                mark=none,
                line width=1.5pt,
            ] table[
                x=numoperatorevals,
                y=performance,
                col sep=comma,
            ] {plot_data/cournot-nash_equilibrium_problem_n100/eg-aa.tex};
            \addlegendentry{\algnamefont EG-AA}
            
            \addplot+[
                color=color8,
                mark=none,
                line width=1.5pt,
                dashed,
            ] table[
                x=numoperatorevals,
                y=performance,
                col sep=comma,
            ] {plot_data/cournot-nash_equilibrium_problem_n100/proxflex-aai.tex};
            \addlegendentry{\algnamefont \ProxFLEX: AA-I}

            \addplot+[
                color=color10,
                mark=none,
                line width=1.5pt,
                dashed,
            ] table[
                x=numoperatorevals,
                y=performance,
                col sep=comma,
            ] {plot_data/cournot-nash_equilibrium_problem_n100/proxflex-aaii.tex};
            \addlegendentry{\algnamefont \ProxFLEX: AA-II}

            \end{semilogyaxis}
        \end{tikzpicture}
    \end{subfigure}

    \vspace{1em}
    \centering
    \pgfplotslegendfromname{commonLegend}
    \vspace{0.5em}
    
    \caption{Convergence of algorithms on the Cournot–Nash equilibrium problem where \(r^{k} = R_{1/2L_F}(z^k)\) and \(R\) is the residual mapping in~\eqref{eq:direction:Rgam}. Both {\algnamefont AA-I} and {\algnamefont AA-II} use memory parameter \(m=3\). The number of operator evaluations equals the number of \(F\) and \(\prox_{\gamma g}\) evaluations.}
    \label{fig:nash}
\end{figure}
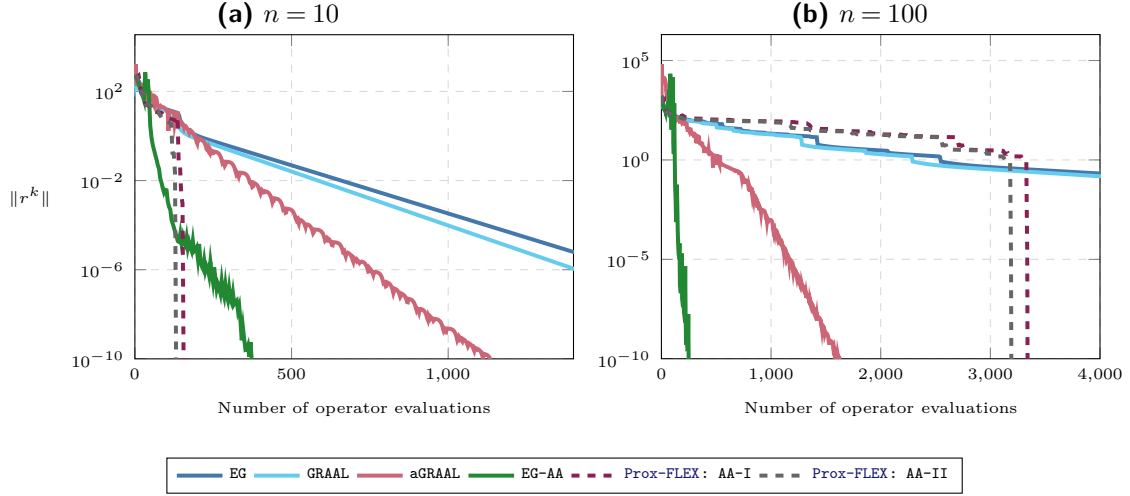

\subsection{Sparse logistic regression}\label{sec:logistic}
Consider the sparse logistic regression problem
\begin{align}\label{eq:logistic_01_label}
    \minimize_{x\in\mathbb{R}^{n}}\; \sum_{i=1}^m \log\left(1 + \exp\left({-b_i a_i^{\top}x}\right)\right) + \lambda \norm{x}_{1}
\end{align}
where \((a_i,b_i) \in \mathbb{R}^n\times \{\pm 1\}\) for each \(i = 1,\dots,m\). The minimization problem~\eqref{eq:logistic_01_label} can equivalently be written as the inclusion problem \eqref{eq:the_inclusion_problem} by letting \(\calH=\reals^{n}\) with the inner product set to the dot product, \(F:\mathbb{R}^{n}\to\mathbb{R}^{n}\) such that \(F(x)=K^{\top} \sigma(Kx)\) for each \(x\in\reals^{n}\)
where
\begin{align*}
    \sigma:\mathbb{R}^{m}\to\mathbb{R}^{m}:
    \begin{bmatrix} u_1 \\ \vdots \\ u_m \end{bmatrix} 
    \mapsto 
    \begin{bmatrix} \frac{\exp(u_1)}{1+\exp(u_1)} \\ \vdots \\ \frac{\exp(u_m)}{1+\exp(u_m)} \end{bmatrix},  \quad K = 
    \begin{bmatrix}
        - b_1 a_1^{\top} \\
        \vdots  \\
        - b_m a_m^{\top} 
    \end{bmatrix}
    \in \mathbb{R}^{m\times n},
\end{align*}
and \(g = \lambda \norm{\cdot}_{1} \). Moreover, note that \Cref{ass:main} holds with \(L_F=(1/4)\norm{K}^2\). The results of the numerical experiments are presented in \Cref{fig:logistic}. Although not designed specifically for minimization problems, we observe that~\ProxFLEX{} with \texttt{AA-II} directions performs at the top in all but one problem.

\input{plots/logistic.tex}


\section{Conclusions}\label{sec:conclusions}
This paper investigated algorithms for solving inclusion problems involving the sum of a monotone and Lipschitz continuous operator and the subdifferential of a proper, convex, and lower semicontinuous function. We proposed a new Lyapunov function for Korpelevich’s extragradient method and established a last-iterate convergence result. Departing from the standard Fej\'er-type analysis, this Lyapunov-based optimality measure did not rely on a known solution to the inclusion problem. It underpinned three novel algorithms that extend the extragradient method. These algorithms balanced user-specified directions and standard extragradient steps, guided by carefully designed line search steps based on the new Lyapunov analysis. In addition to providing global convergence results under various assumptions, we showed that when the directions are superlinear, no backtracking is triggered, leading to superlinear convergence.

\new{Future research directions include developing solution-independent Lyapunov functions for other related methods. In particular, the forward-reflected-backward method of Malitsky and Tam~\cite{malitsky2020forwardbackwardsplitting} is a relevant case. Relatedly, a solution-independent Lyapunov function is already available for Malitsky’s projected reflected gradient method~\cite{malitsky2015projectedreflectedgradient} in \cite[Section~D]{cai2023accelerated}}. Another promising direction is to broaden the scope of the analysis beyond the monotone setting to include cohypomonotone operators~\cite{pennanen2002localconvergenceproximal}, and the more general class of problems characterized by the weak Minty condition~\cite{diakonikolas2021efficient,lee2021fast,pethick2022escaping}. Additionally, further exploration is warranted to adapt the approach to the mirror prox framework~\cite{nemirovski2004proxmethodrate}.


\begin{appendix}

\section{Background on Korpelevich's extragradient method}\label{app:background}
In the original paper~\cite{korpelevich1976extragradientmethodfinding}, the extragradient method~\eqref{alg:EG} was analyzed under the assumption that \(g\) is the indicator function of a nonempty, closed, and convex set, making the proximal operator reduce to the projection onto that set. However, as noted in~\cite{monteiro2011complexityvariantstsengs}, the extragradient method extends to the more general setting~\eqref{eq:the_inclusion_problem}. The remainder of this section presents results in this more general context, with proofs included for completeness.

\begin{definition}
    Suppose that \Cref{ass:main} holds and let \(\gamma\in\reals_{++}\). A point \(z\in\calH\) is said to be a fixed point of the extragradient method~\eqref{alg:EG} if
    \begin{subequations}\label{eq:EG:fixed_point}
        \begin{align}\label{eq:EG:fixed_point:1}
            \bar{z} &= \prox_{\gamma g}\p{z - \gamma F\p{z}}, \\ \label{eq:EG:fixed_point:2}
            z &=\prox_{\gamma g}\p{z - \gamma F\p{\bar{z} }}.
        \end{align}
    \end{subequations}
\end{definition}

\begin{proposition}
\label{prop:EG:fixed_point}
    Suppose that \Cref{ass:main} holds and let \(\gamma\in\reals_{++}\). Then, the following hold:
    \begin{propenumerate}
        \item If \(z\in \zer\p{F + \partial g}\), then \(z\) is a fixed point of the extragradient method, i.e.,~\eqref{eq:EG:fixed_point} holds, and \(z = \bar{z}\). \label{prop:EG:fixed_point:solutions_are_fixed_points}
        \item If \(\gamma \in (0,1/L_{F})\), \(z\) is a fixed point of the extragradient method, and \(\bar{z}\) is defined as in~\eqref{eq:EG:fixed_point:1}, then \(z=\bar{z} \in \zer\p{F + \partial g}\). \label{prop:EG:fixed_point:fixed_points_are_solutions}
    \end{propenumerate}
    \begin{proof}
        The proximal evaluations in~\eqref{eq:EG:fixed_point:1} and~\eqref{eq:EG:fixed_point:2} can equivalently be written via their subgradient characterization as
        \begin{subequations}\label{eq:EG:fixed_point:implicit_prox}
            \begin{align}\label{eq:EG:fixed_point:implicit_prox:1}
                \gamma^{-1}\p{z  - \bar{z}} -  F\p{z} &\in \partial g\p{\bar{z}}, \\\label{eq:EG:fixed_point:implicit_prox:2}
                -F\p{\bar{z}} &\in \partial g\p{z},
            \end{align}
        \end{subequations}
        respectively.
        \begin{itemize}
            \item[\ref{prop:EG:fixed_point:solutions_are_fixed_points}:] Note that \(z\in \zer\p{F + \partial g}\) and~\eqref{eq:EG:fixed_point:1} is equivalent to \(-F\p{z}\in \partial g \p{z}\) and~\eqref{eq:EG:fixed_point:implicit_prox:1}, respectively. Using monotonicity of \(\partial g\)~\cite[Theorem 20.48]{bauschke2017convexanalysismonotone},  we get that 
            \begin{align*}
                0   &\leq \inner{\gamma^{-1}\p{z  - \bar{z}} -  F\p{z} + F\p{z}}{\bar{z} - z } \\
                    &= - \gamma^{-1} \norm{z-\bar{z}}^{2} \leq 0,
            \end{align*}
            since \(\gamma\in\reals_{++}\). We conclude that \(z = \bar{z}\) and that~\eqref{eq:EG:fixed_point} holds.
            \item[\ref{prop:EG:fixed_point:fixed_points_are_solutions}:] By using monotonicity of \(\partial g\) at the points \(\bar{z}\) and  \(z\), and the corresponding subgradients in~\eqref{eq:EG:fixed_point:implicit_prox}, we get that 
            {\begin{allowdisplaybreaks}
            \begin{align}
                0   &\leq \inner{\gamma^{-1}\p{z  - \bar{z}} -  F\p{z} + F\p{\bar{z}}}{\bar{z} - z } \notag\\
                    &= - \gamma^{-1} \norm{z-\bar{z}}^{2} + \inner{F\p{\bar{z}} - F\p{z}}{\bar{z} - z} \notag\\
                    &\leq - \gamma^{-1} \norm{z-\bar{z}}^{2} + \norm{F\p{\bar{z}} - F\p{z}}\norm{\bar{z} - z} \notag\\
                    &\leq \p{L_{F}- \gamma^{-1}} \norm{z-\bar{z}}^{2}, \label{eq:EG:fixed_point:ineq}
            \end{align}
            \end{allowdisplaybreaks}}%
            where the Cauchy–Schwarz inequality is used in the second inequality, and Lipschitz continuity of \(F\) in the third inequality. Since \(L_{F}- \gamma^{-1}<0\), we conclude from~\eqref{eq:EG:fixed_point:ineq} that \(z = \bar{z}\). That \(z=\bar{z} \in \zer\p{F + \partial g}\) now follows from~\eqref{eq:EG:fixed_point:implicit_prox:1} or~\eqref{eq:EG:fixed_point:implicit_prox:2}. \qedhere
        \end{itemize}
    \end{proof}
\end{proposition}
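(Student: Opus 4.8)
The plan is to convert both proximal identities in \eqref{eq:EG:fixed_point} into subdifferential inclusions and then play them against one another using monotonicity of \(\partial g\). Recall that \(p = \prox_{\gamma g}(x)\) is equivalent to \(\gamma^{-1}(x-p) \in \partial g(p)\); applying this to \eqref{eq:EG:fixed_point:1} and \eqref{eq:EG:fixed_point:2} turns a fixed point \(z\) together with its associated \(\bar z\) into the pair of inclusions \(\gamma^{-1}(z - \bar z) - F(z) \in \partial g(\bar z)\) and \(-F(\bar z) \in \partial g(z)\). These two relations form the common backbone of both parts, and I would record them at the outset.

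For part (i), start from \(z \in \zer\p{F + \partial g}\), i.e.\ \(-F(z) \in \partial g(z)\). By the proximal characterization this is exactly \(z = \prox_{\gamma g}(z - \gamma F(z))\), so the point \(\bar z\) forced by \eqref{eq:EG:fixed_point:1} must equal \(z\); substituting \(\bar z = z\) into \eqref{eq:EG:fixed_point:2} reduces it to the same identity \(z = \prox_{\gamma g}(z - \gamma F(z))\), so \eqref{eq:EG:fixed_point} holds with \(z = \bar z\). Equivalently, one may pair \(\gamma^{-1}(z - \bar z) - F(z) \in \partial g(\bar z)\) with \(-F(z) \in \partial g(z)\) and use monotonicity of \(\partial g\) to obtain \(0 \le -\gamma^{-1}\norm{z - \bar z}^2\), hence \(z = \bar z\). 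This direction uses only \(\gamma \in \reals_{++}\).

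For part (ii), feed the two backbone inclusions into the monotonicity inequality for \(\partial g\) evaluated at the points \(\bar z\) and \(z\):
\[
0 \le \inner{\gamma^{-1}(z - \bar z) - F(z) + F(\bar z)}{\bar z - z} = -\gamma^{-1}\norm{z - \bar z}^2 + \inner{F(\bar z) - F(z)}{\bar z - z}.
\]
Bounding the second term by Cauchy--Schwarz and then by \(L_F \norm{\bar z - z}^2\) via Lipschitz continuity of \(F\) gives \(0 \le (L_F - \gamma^{-1})\norm{z - \bar z}^2\); since \(\gamma < 1/L_F\) makes the coefficient strictly negative, \(z = \bar z\) follows. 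Plugging \(\bar z = z\) back into \(-F(\bar z) \in \partial g(z)\) yields \(-F(z) \in \partial g(z)\), i.e.\ \(z \in \zer\p{F + \partial g}\).

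There is no serious obstacle; the argument is short once the proximal steps are written in subdifferential form. The only points requiring care are the bookkeeping of which inclusion is evaluated at \(\bar z\) versus at \(z\) when invoking monotonicity, and the observation that part (ii) genuinely needs the strict bound \(\gamma < 1/L_F\) (not merely \(\gamma \le 1/L_F\)) so that \(L_F - \gamma^{-1} < 0\) actually forces \(z = \bar z\) — this is the single place where Lipschitz continuity of \(F\) enters.
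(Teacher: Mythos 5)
Your proposal is correct and follows essentially the same route as the paper: rewrite both proximal steps as the subdifferential inclusions \(\gamma^{-1}(z-\bar z)-F(z)\in\partial g(\bar z)\) and \(-F(\bar z)\in\partial g(z)\), then invoke monotonicity of \(\partial g\) (plus Cauchy--Schwarz and Lipschitz continuity for part (ii)) to force \(z=\bar z\). Your alternative observation for part (i) — that \(-F(z)\in\partial g(z)\) is literally the prox identity \(z=\prox_{\gamma g}(z-\gamma F(z))\), so \(\bar z=z\) by single-valuedness of the proximal operator — is a slightly more direct shortcut than the paper's monotonicity argument, but both are valid and you correctly identify where the strict bound \(\gamma<1/L_F\) is needed.
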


\begin{proposition}
    \label{prop:EG:primal_descent}
    Suppose that \Cref{ass:main} holds, the sequence \(\p{\p{z^{k},\bar{z}^{k}}}_{k\in\naturals}\) is generated by~\eqref{alg:EG} with initial point \(z^0\in \calH\) and step-size parameter \(\gamma\in\reals_{++}    \), and \(z^{\star}\in\zer\p{F + \partial g}\). Then
    \begin{equation}\label{eq:EG:Lyapunov_with_solution}
        \norm{z^{k+1}-z^{\star}}^{2} \leq \norm{z^{k}-z^{\star}}^{2} 
        - \p{1-\gamma^{2} L_{F}^{2}}\norm{\bar{z}^{k}-z^{k}}^{2} 
    \end{equation}
    for each \(k\in\naturals\). Moreover, if \(\gamma\in\p{0,1/L_{F}}\), then \(\p{z^{k}}_{k\in\naturals}\) converges weakly to a point in \(\zer\p{F + \partial g}\).
    \begin{proof}
        Note that the first and second proximal evaluations in~\eqref{alg:EG} are equivalent to 
        \begin{align}\label{alg:EG:1:rewritten}
            0\leq g\p{z} - g\p{\bar{z}^{k}} - \inner{\gamma^{-1}\p{z^{k} - \bar{z}^{k}} - F\p{z^{k}}}{ z - \bar{z}^{k} } \text{ for each } z \in \calH,
        \end{align}
        and
        \begin{align}\label{alg:EG:2:rewritten}
            0\leq g\p{z} - g\p{z^{k+1}} - \inner{\gamma^{-1}\p{z^{k} - z^{k+1}} - F\p{\bar{z}^{k}}}{z - z^{k+1} } \text{ for each } z \in \calH,
        \end{align}
        respectively, and the assumption \(z^{\star}\in\zer\p{F + \partial g}\) is equivalent to
        \begin{align}\label{ass:Korpelevic:main:solution_exists:rewritten}
            0 \leq g\p{z} - g(z^{\star}) - \inner{ -F\p{z^{\star}}}{z - z^{\star} } \text{ for each } z \in \calH.
        \end{align}
        Picking \(z = z^{k+1}\) in~\eqref{alg:EG:1:rewritten}, \(z = z^{\star}\) in~\eqref{alg:EG:2:rewritten}, \(z = \bar{z}^{k}\) in~\eqref{ass:Korpelevic:main:solution_exists:rewritten}, summing the resulting inequalities, and multiplying by \(2\gamma\) gives 
        {\begin{allowdisplaybreaks}
        \begin{align*}
            0 &\leq
            \begin{aligned}[t]
                & 2\gamma g\p{z^{k+1}} - 2\gamma g\p{\bar{z}^{k}} - 2\inner{ z^{k} - \bar{z}^{k} - \gamma F\p{z^{k}}}{z^{k+1} - \bar{z}^{k} } \\
                & + 2\gamma g(z^{\star}) - 2\gamma g\p{z^{k+1}} - 2\inner{ z^{k} - z^{k+1} - \gamma F\p{\bar{z}^{k}}}{z^{\star} - z^{k+1} } \\
                & + 2\gamma g\p{\bar{z}^{k}} - 2\gamma g(z^{\star}) - 2\inner{ -\gamma F\p{z^{\star}}}{\bar{z}^{k} - z^{\star} } 
            \end{aligned}\\
            &= A_k + B_k,
        \end{align*} 
        \end{allowdisplaybreaks}}%
        where
        {\begin{allowdisplaybreaks}
        \begin{align*}
            A_k {}={}& - 2\inner{ z^{k} - \bar{z}^{k}}{z^{k+1} - \bar{z}^{k}}  - 2\inner{ z^{k} - z^{k+1}}{z^{\star} - z^{k+1} } \\
            {}={}& \norm{z^{k} -z^{k+1}}^{2} - \norm{z^{k} - \bar{z}^{k}}^{2} - \norm{z^{k+1} - \bar{z}^{k}}^{2} + \norm{z^{k} - z^{\star}}^{2} - \norm{z^{k} - z^{k+1}}^{2} \\
            {}{}&- \norm{z^{\star} - z^{k+1}}^{2} \\
            {}={}&  \norm{z^{k} - z^{\star}}^{2} - \norm{z^{\star} - z^{k+1}}^{2} - \norm{z^{k} - \bar{z}^{k}}^{2} - \norm{z^{k+1} - \bar{z}^{k}}^{2},
        \end{align*} 
        \end{allowdisplaybreaks}}%
        and 
        {\begin{allowdisplaybreaks}
        \begin{align*}
            B_k &= 2\gamma\inner{  F\p{z^{k}}}{z^{k+1} - \bar{z}^{k} } +  2\gamma\inner{ F\p{\bar{z}^{k}}}{z^{\star} - z^{k+1} }  - 2\gamma\inner{ - F\p{z^{\star}}}{\bar{z}^{k} - z^{\star} } \\
            &= 
            \begin{aligned}[t]
                & 2\gamma\inner{  F\p{z^{k}}}{z^{k+1} - \bar{z}^{k} } +  2\gamma\inner{ F\p{\bar{z}^{k}}}{z^{\star} - z^{k+1} } + 2\gamma\inner{F\p{\bar{z}^{k}}}{\bar{z}^{k} - z^{\star}} \\
                & - 2\gamma\inner{F\p{\bar{z}^{k}} - F\p{z^{\star}}}{\bar{z}^{k} - z^{\star} }
            \end{aligned}\\
            & \leq 2\gamma\inner{  F\p{z^{k}}}{z^{k+1} - \bar{z}^{k} } +  2\gamma\inner{ F\p{\bar{z}^{k}}}{z^{\star} - z^{k+1} } + 2\gamma\inner{F\p{\bar{z}^{k}}}{\bar{z}^{k} - z^{\star}} \\
            & = 2\gamma\inner{  F\p{z^{k}} - F\p{\bar{z}^{k}}}{z^{k+1} - \bar{z}^{k} } \\
            &\leq \gamma^{2}\norm{F\p{z^{k}} - F\p{\bar{z}^{k}}}^{2} + \norm{z^{k+1} - \bar{z}^{k}}^{2} \\
            &\leq \gamma^2L_{F}^{2}\norm{z^{k} - \bar{z}^{k}}^{2}  + \norm{z^{k+1} - \bar{z}^{k}}^{2},
        \end{align*} 
        \end{allowdisplaybreaks}}%
        where monotonicity of \(F\) is used in the first inequality, Young's inequality is used in the second inequality, and Lipschitz continuity of \(F\) in the third inequality. We conclude that 
        \begin{align*}
            0   &\leq A_k + B_k \leq  \norm{z^{k} - z^{\star}}^{2} - \norm{z^{\star} - z^{k+1}}^{2} - \p{1 - \gamma^{2}L_{F}^{2} } \norm{z^{k} - \bar{z}^{k}}^{2},
        \end{align*}
        which proves~\eqref{eq:EG:Lyapunov_with_solution}.
    
        Next, note that~\eqref{eq:EG:Lyapunov_with_solution} gives that \(\p{\norm{z^{k} - z^{\star}}}_{k\in\naturals}\) converges. Thus, \(\p{z^{k}}_{k\in\naturals}\) is bounded and there exists a subsequence \(\p{z^{k}}_{k\in K}\rightharpoonup z^{\infty}\) for some \(z^{\infty} \in \calH\)~\cite[Lemma 2.45]{bauschke2017convexanalysismonotone}. Moreover,~\eqref{eq:EG:Lyapunov_with_solution} and the requirement \(\gamma\in\p{0, 1/L_{F}}\) give that \(\p{\norm{\bar{z}^{k}-z^{k}}^{2}}_{k\in\naturals}\) is summable, and therefore, \(\p{\bar{z}^{k}}_{k\in K}\rightharpoonup z^{\infty}\). The first proximal evaluation in~\eqref{alg:EG} can equivalently be written as 
        \begin{align}\label{alg:EG:1:rewritten:seq}
            \gamma^{-1}\p{z^{k}- \bar{z}^{k} } - F\p{z^{k}} + F\p{\bar{z}^{k}} \in \p{F + \partial g}\p{\bar{z}^{k}}.
        \end{align}
        The left-hand side of~\eqref{alg:EG:1:rewritten:seq} converges strongly to zero since \(F\) is continuous and \((\norm{z^{k}-\bar{z}^{k}})_{k\in\naturals}\) converges to zero. Moreover, the operator \(F + \partial g\) is maximally monotone, since \(F\) is maximally monotone (by continuity and monotonicity~\cite[Corollary 20.28]{bauschke2017convexanalysismonotone}), \(\partial g\) is maximally monotone~\cite[Theorem 20.48]{bauschke2017convexanalysismonotone}, and \(F\) has full domain~\cite[Corollary 25.5]{bauschke2017convexanalysismonotone}. Thus,~\cite[Proposition 20.38]{bauschke2017convexanalysismonotone} gives that \(z^{\infty} \in \zer\p{F + \partial g}\), and by~\cite[Lemma 2.47]{bauschke2017convexanalysismonotone} we conclude that \(\p{z^{k}}_{k\in\naturals}\) converges weakly to a point in \(\zer\p{F + \partial g}\), as claimed. 
    \end{proof}
\end{proposition}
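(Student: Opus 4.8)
The plan is to establish~\eqref{eq:EG:Lyapunov_with_solution} by the classical Fej\'er-type argument and then deduce the weak-convergence statement from it via an Opial/demiclosedness argument.

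First I would rewrite the two proximal steps of~\eqref{alg:EG} through the variational characterization of $\prox_{\gamma g}$: the first step yields $0 \le g(z) - g(\bar z^k) - \inner{\gamma^{-1}(z^k - \bar z^k) - F(z^k)}{z - \bar z^k}$ for all $z \in \calH$, the second yields the analogous inequality with $\bar z^k$, $F(z^k)$ replaced by $z^{k+1}$, $F(\bar z^k)$, and the hypothesis $z^\star \in \zer(F + \partial g)$ (i.e.\ $-F(z^\star) \in \partial g(z^\star)$) yields $0 \le g(z) - g(z^\star) + \inner{F(z^\star)}{z - z^\star}$ for all $z$. The crucial move is to evaluate these three inequalities at $z = z^{k+1}$, $z = z^\star$, $z = \bar z^k$ respectively, add them, and multiply by $2\gamma$; the terms $g(\bar z^k)$, $g(z^{k+1})$, $g(z^\star)$ then cancel in pairs, leaving $0 \le A_k + B_k$, where $A_k$ gathers the inner products not involving $F$ and $B_k$ the rest. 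Using $-2\inner{x}{y} = \|x - y\|^2 - \|x\|^2 - \|y\|^2$, I would rewrite $A_k = \|z^k - z^\star\|^2 - \|z^{k+1} - z^\star\|^2 - \|z^k - \bar z^k\|^2 - \|z^{k+1} - \bar z^k\|^2$. For $B_k$ I would add and subtract $2\gamma\inner{F(\bar z^k)}{\bar z^k - z^\star}$, so that monotonicity of $F$ between $\bar z^k$ and $z^\star$ discards a nonpositive term while the remainder collapses to $2\gamma\inner{F(z^k) - F(\bar z^k)}{z^{k+1} - \bar z^k}$; bounding this first by Young's inequality and then by Lipschitz continuity gives $B_k \le \gamma^2 L_F^2\|z^k - \bar z^k\|^2 + \|z^{k+1} - \bar z^k\|^2$. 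Adding the two estimates and rearranging produces exactly~\eqref{eq:EG:Lyapunov_with_solution}.

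For the convergence claim I would assume $\gamma \in (0, 1/L_F)$, so that $1 - \gamma^2 L_F^2 > 0$. Then~\eqref{eq:EG:Lyapunov_with_solution} shows $(\|z^k - z^\star\|)_{k}$ is nonincreasing, hence convergent, so $(z^k)_k$ is bounded and admits a weak sequential cluster point; telescoping~\eqref{eq:EG:Lyapunov_with_solution} shows $\sum_k \|\bar z^k - z^k\|^2 < \infty$, hence $\|\bar z^k - z^k\| \to 0$ and, by Lipschitz continuity, $\|F(z^k) - F(\bar z^k)\| \to 0$. If $z^k \rightharpoonup z^\infty$ along $k \in K$, then $\bar z^k \rightharpoonup z^\infty$ along $K$ as well. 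Rewriting the first proximal step as $\gamma^{-1}(z^k - \bar z^k) - F(z^k) + F(\bar z^k) \in (F + \partial g)(\bar z^k)$, the left-hand side converges strongly to $0$; since $F$ is monotone, continuous and has full domain it is maximally monotone, hence $F + \partial g$ is maximally monotone, and the weak--strong sequential closedness of its graph forces $0 \in (F + \partial g)(z^\infty)$. Finally, because $(\|z^k - z^\star\|)_k$ converges for every $z^\star \in \zer(F + \partial g)$ while every weak cluster point of $(z^k)_k$ lies in $\zer(F + \partial g)$, the Opial-type lemma \cite[Lemma 2.47]{bauschke2017convexanalysismonotone} yields weak convergence of the whole sequence to a point of $\zer(F + \partial g)$.

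The main obstacle I expect is the bookkeeping in the first part: choosing the three test points so that the $g$-values cancel, tracking signs through $A_k$, and carrying out the add-and-subtract step on $B_k$ so that the surviving negative term is precisely $-(1 - \gamma^2 L_F^2)\|\bar z^k - z^k\|^2$ with the sharp constant rather than something weaker. The convergence argument is then essentially routine once the residual $\|\bar z^k - z^k\|$ is known to be square-summable; its only delicate point is the appeal to maximal monotonicity of $F + \partial g$ and demiclosedness of its graph to pass to the limit along the subsequence.
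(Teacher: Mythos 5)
Your proposal is correct and follows essentially the same route as the paper's proof: the same three subgradient inequalities tested at \(z^{k+1}\), \(z^{\star}\), \(\bar{z}^{k}\), the same \(A_k + B_k\) split with the polarization identity, the add-and-subtract of \(2\gamma\inner{F(\bar{z}^{k})}{\bar{z}^{k}-z^{\star}}\) to invoke monotonicity, Young's and Lipschitz bounds yielding the sharp constant, and the identical Fej\'er/demiclosedness argument for weak convergence.
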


\new{
\begin{remark}
    Similar to \Cref{rmk:cyclically_monotone}, the results in this section remain valid when \(\partial g\) in \eqref{eq:the_inclusion_problem} is replaced with a maximally monotone and 3-cyclically monotone operator \(T:\calH\to\calH\) and the proximal operators \(\prox_{\gamma g}\) in \eqref{alg:EG} with the resolvent \(\p{\Id + \gamma T}^{-1}\).
\end{remark}
}

\section{Counterexamples}\label{app:counterexamples}
\begin{example}\label{ex:counterexample:tseng}
    Let \(\V_k = \V\p{z^{k},\bar{z}^{k},z^{k+1}}\) for the Lyapunov function \(\V\) defined in~\eqref{eq:V} and iterates \(\seq{\p{z^{k},\bar{z}^{k}}}\) generated by Tseng's method~\eqref{alg:Tseng}. This example contains a particular instance of the inclusion problem~\eqref{eq:the_inclusion_problem}, initial point \(z^{0}\in\calH\), and step size \(\gamma\in\p{0,1/L_F}\) for which \(\V_k\) increases between the first two consecutive iterations, thereby establishing that \(\V_k\) has no (one-step) descent inequality in this case. In particular, consider \(\calH=\reals^4\), \(F:\reals^4 \to \reals^4\), and \(g:\reals^4\to\reals\cup \{+\infty\}\) such that 
    \begin{align*}
        F(z) {}={} 
        \begin{bmatrix}
            A x \\
            - A^{\top} y
        \end{bmatrix}
        \quad 
        \text{ and }
        \quad
        g(z) {}={}
        \begin{cases}
            0 & \text{if } x \in [-7,6]^2 \text{ and } y \in [1,8]^2,\\
            +\infty & \text{otherwise}
        \end{cases}
    \end{align*}
    for each \(z = \p{x,y} \in \reals^2 \times \reals^2\), respectively, where   
    \begin{align*}
        A {}={}
        \begin{bmatrix}
            7 & 6 \\
            1 & 0
        \end{bmatrix}.
    \end{align*}
    It is straightforward to verify that \Cref{ass:main} holds with\footnote{The matrix norm is taken as the spectral norm.} 
    \begin{align*}
        L_{F} {}={} 
        \left\lVert 
        \begin{bmatrix}
            0 &  A \\
            - A^{\top} & 0
        \end{bmatrix}
        \right\rVert \approx 9.25091.
    \end{align*}
    By letting \(z^{0} = \p{-1, -7, -1, 7} \) and \(\gamma = {1}/{10}\), Tseng's method gives that 
    {\begin{allowdisplaybreaks}
    \begin{align*}
        \bar{z}^{0} &{}={} \left(-\frac{9}{2}, - \frac{69}{10}, 1, \frac{32}{5}\right), \\ 
        z^{1} &{}={} \left(-\frac{277}{50}, -\frac{71}{10}, -\frac{36}{25}, \frac{43}{10}\right), \\ 
        \bar{z}^{1} &{}={} \left(-7,-\frac{1739}{250}, 1, 1\right), 
    \end{align*}
    \end{allowdisplaybreaks}}%
    and therefore
    \begin{align*}
        \V_0 {}={} 1662 \quad \text{ and }\quad
        \V_1 {}={} \frac{1187246}{625} {}={} 1899.5936,
    \end{align*}
    establishing the claim.\footnote{Code to reproduce this example can be found at \url{https://github.com/ManuUpadhyaya/flex/blob/main/Counterexample_B1.ipynb}.} \qed
\end{example}

\begin{example}\label{ex:counterexample:full_eg}
    Consider the inclusion problem
    \begin{align*}
        \text{find}\  z\in\calH\ \text{ such that }\ 0 \in F(z) + T(z)
    \end{align*}
    where \(F:\calH\to\calH\) satisfies \Cref{ass:F} and \(T:\calH\to2^\calH\) is a maximally monotone operator. Let
    \begin{align}\label{alg:full_eg}
        \begin{split}
          \bar{z}^{k}
          &= \p{\Id+\gamma T}^{-1}\p{z^{k} - \gamma F(z^{k})}, 
          \\
          z^{k+1} 
          &= \p{\Id+\gamma T}^{-1}\p{z^{k} - \gamma F(\bar{z}^{k})},
        \end{split}
    \end{align}
    and \(\V_k = \V\p{z^{k},\bar{z}^{k},z^{k+1}}\) for the Lyapunov function \(\V\) defined in~\eqref{eq:V}. This example contains a particular problem instance for which \(\seq{z^{k}}\) diverges and \(\V_k\) increases between the first two consecutive iterations. In particular, consider \(\calH = \reals^2\), \(z^0 = (10, 10)\), \(\gamma = 1/10\), and \(F,T:\reals^2 \to \reals^2\) such that
    \begin{align*}
        F(z) {}={} 
        \underbracket{
        \begin{bmatrix}
            0 & 9 \\
            -9 & 0 
        \end{bmatrix}
        }_{=A}
        \begin{bmatrix}
            x \\
            y
        \end{bmatrix}
        \quad \text{ and } \quad
        T(z) {}={}
        \underbracket{
        \begin{bmatrix}
            0 & -4 \\
            4 & 0 
        \end{bmatrix}
        }_{=B}
        \begin{bmatrix}
            x \\
            y
        \end{bmatrix}
    \end{align*}
    for each \(z = \p{x,y} \in \reals\times\reals\), where \(L_F = 9\). Note that~\eqref{alg:full_eg} reduces to
    \begin{align*}
        z^{k+1} = \underbracket{\left(I+\gamma B\right)^{-1}\left(I - \gamma A \left( \left(I+\gamma B\right)^{-1} \left( I - \gamma A\right) \right) \right)}_{=C} z^{k},
    \end{align*}
    where \(C\in\reals^{2\times 2}\)  has full rank and spectral radius \(\approx 1.132596\), which is greater than one. Therefore, we can conclude that \(\seq{z^{k}}\) diverges. Moreover,~\eqref{alg:full_eg} gives that
    {\begin{allowdisplaybreaks}
    \begin{align*}
        \bar{z}^0 &{}={} \left(\frac{215}{29}, \frac{465}{29}\right), \\
        z^1 &{}={} \left(\frac{3245}{1682}, \frac{26745}{1682}\right), \\
        \bar{z}^1 &{}={} \left(-\frac{447965}{97556}, \frac{1899785}{97556}\right), \\
        z^2 &{}={} \left(-\frac{53118995}{5658248}, \frac{87834005}{5658248}\right),
    \end{align*}
    \end{allowdisplaybreaks}}%
    and therefore
    {\begin{allowdisplaybreaks}
    \begin{align*}
        \V_0 &{}={} \frac{5875000}{841} {}\approx{} 6985.73127229489, \\
        \V_1 &{}={} \frac{12676046875}{1414562} {}\approx{} 8961.11084208398,
    \end{align*}
    \end{allowdisplaybreaks}}%
    establishing the second claim.\footnote{Code to reproduce this example can be found at \url{https://github.com/ManuUpadhyaya/flex/blob/main/Counterexample_B2.ipynb}.} \qed
\end{example}


\new{%
\section{Comparison to standard optimality measures}\label{sec:comparison}
This section presents some standard optimality measures for \eqref{eq:the_inclusion_problem} and compares them to \(\V_k = \V\p{z^{k},\bar{z}^{k},z^{k+1}}\) for the Lyapunov function \(\V\) defined in~\eqref{eq:V} and iterates \(\seq{\p{z^{k},\bar{z}^{k}}}\) generated by the extragradient method~\eqref{alg:EG}. It also includes a comparison to recent last-iterate convergence results for the extragradient method.

\begin{definition}\label{def:performance_measures}
    Suppose that \Cref{ass:main} holds.
    \begin{enumerate}[label=(\roman*)]
        \item The \emph{natural residual} is defined as
        \begin{align*}
            \norm{z - \prox_{g}(z - F(z))}
        \end{align*}
        for each \(z\in \calH\).
        \item The \emph{tangent residual} is defined as
        \begin{align*}
            \inf_{\xi \in \partial g(z)} \norm{F(z)+ \xi}
        \end{align*}
        for each \(z\in \calH\).
        \item Suppose that \(z^{0}\in\calH\), \(z^{\star} \in \mathrm{zer}(F + \partial g)\) and \(\delta = \|z^0 - z^{\star}\|>0\). Then the restricted gap function \(\textup{\texttt{Gap}}:\calH\to\reals\cup\set{+\infty}\) is defined as
        \begin{align}\label{eq:gap_functions}
            \textup{\texttt{Gap}}(z) = \sup_{w \in \mathrm{dom}\,\partial g \cap \mathbb{B}(z^{\star};\delta)} \left(\inner{F(w)}{z-w} + g(z) - g(w)\right)
        \end{align}
        for each \(z\in \calH\), where \(\mathbb{B}(z^{\star};\delta) = \{ z \in\mathcal{H} : \|z - z^{\star}\| \leq \delta \} \).
    \end{enumerate}
\end{definition}

\begin{remark}
    In this remark, we establish that the measures in \Cref{def:performance_measures} are indeed optimality measures for \eqref{eq:the_inclusion_problem}.
    \begin{enumerate}[label=(\roman*)]
        \item The natural residual is nonnegative and zero if and only if \(z\in\zer\p{F + \partial g}\), since 
        \begin{align*}
            \norm{z - \prox_{g}(z - F(z))} = 0 \; \iff \; z = \prox_{g}(z - F(z)) \; \iff \; -F(z) \in \partial g(z),
        \end{align*}
        where the last equivalence follows from the subgradient characterization of the proximal operator. 
        \item The tangent residual is nonnegative and zero if and only if \(z\in\zer\p{F + \partial g}\), since the tangent residual upper bounds the natural residual by \Cref{prop:natres_upper_tanres}.
        \item  It is well-known that (e.g., see \cite[Lemma 1]{nesterov2006dualextrapolationits} and \cite[Lemma 3]{malitsky2020goldenratioalgorithms})
        \begin{itemize}
            \item \(\textup{\texttt{Gap}}(z) \geq 0\) for each \(z\in\mathcal{H}\), 
            \item if \(z\in\mathrm{zer}(F + \partial g)\cap \mathbb{B}(z^{\star};\delta)\), then \(\textup{\texttt{Gap}}(z) = 0\), and
            \item if $z \in \mathrm{dom}\,\partial g \cap \mathbb{B}(z^{\star};\tilde{\delta})$ for some \(\tilde{\delta} \in (0,\delta)\), then \(\textup{\texttt{Gap}}(z) = 0\) implies that \(z\in\mathrm{zer}(F + \partial g)\).
        \end{itemize}
    \end{enumerate}
\end{remark}

Note that the second proximal step in~\eqref{alg:EG} can equivalently be written via its subgradient characterization as
\begin{equation}\label{eq:comparison:subgradient}
     \underbracket{\gamma^{-1}\p{z^{k}-z^{k+1}}-F(\bar{z}^{k})}_{=\xi^{k+1}} \in \partial g(z^{k+1})
\end{equation}
for each \(k\in\naturals\). This notation allows us to introduce another standard optimality measure, namely \(\p{\norm{F(z^{k}) + \xi^{k}}}_{k\in\mathbb{N}}\). It is clear that this measure upper bounds the tangent residual, i.e.,
\begin{align}\label{eq:tanres_upper_tseng_res}
    \inf_{\xi \in \partial g(z^{k})} \|F(z^{k})+ \xi\| \leq \norm{F(z^{k}) + \xi^{k}}
\end{align}
for each \(k\in\mathbb{N}\).

Next, we present a result that, when combined with \eqref{eq:tanres_upper_tseng_res}, shows that \(\V_k\) upper bounds all the squared optimality measures considered above, except the squared restricted gap function, which is upper bounded up to a positive constant. 

\begin{proposition}\label{prop:comp:bounds}
    Suppose that \Cref{ass:main} holds, the sequence \(\p{\p{z^{k},\bar{z}^{k}}}_{k\in\naturals}\) is generated by~\eqref{alg:EG} with initial point \(z^0\in \calH\) and step-size parameter \(\gamma\in(0,1/L_{F})\), the sequence \(\p{\mathcal{V}_{k}}_{k\in\naturals}\) is given by \(\V_k =\mathcal{V}(z^k, \bar z^k, z^{k+1})\) for each \(k\in \naturals\) and the Lyapunov function \(\V\) defined in~\eqref{eq:V}, and the sequence \(\p{\xi^{k}}_{k\in\mathbb{N}}\) is given by \eqref{eq:comparison:subgradient}. Then,
    \begin{propenumerate}
        \item \label{prop:tsengres_upper_V}\(\norm{F(z^{k+1}) + \xi^{k+1}}^2 \leq \V_{k}\) for each \(k \in \naturals\), 
        \item \label{prop:natres_upper_tanres} \(\norm{z - \prox_{g}(z - F(z))} \leq \inf_{\xi \in \partial g(z)} \norm{F(z)+ \xi}\) for each \(z \in \calH\), 
        \item \label{prop:gap_upper_tanres} \(\textup{\texttt{Gap}}(z)\leq\p{\delta + \norm{z-z^{\star}}}\inf_{\xi \in \partial g(z)} \norm{F(z)+ \xi}\) for each \(z \in \calH\), and in particular, \(\textup{\texttt{Gap}}(z^{k})\leq2\delta \inf_{\xi \in \partial g(z^{k})} \norm{F(z^{k})+ \xi}\) for each \(k\in\mathbb{N}\), where \(\textup{\texttt{Gap}}\) is defined in \eqref{eq:gap_functions}.
    \end{propenumerate}
\end{proposition}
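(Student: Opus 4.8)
The plan is to dispatch the three items one at a time; each reduces to a short computation, and only \ref{prop:tsengres_upper_V} requires a little bookkeeping. Throughout I would use monotonicity and $L_F$-Lipschitz continuity of $F$, nonexpansiveness of $\prox_g$, the subgradient inequality, the subgradient characterization of the proximal operator recalled in \Cref{sec:notation}, and — only for the displayed consequence in \ref{prop:gap_upper_tanres} — the Fej\'er-type inequality \eqref{eq:EG:Lyapunov_with_solution} from \Cref{prop:EG:primal_descent}.

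For \ref{prop:tsengres_upper_V}, I would begin from the identity $F(z^{k+1})+\xi^{k+1} = \bigl(F(z^{k+1})-F(\bar z^{k})\bigr) + \gamma^{-1}(z^{k}-z^{k+1})$, which is immediate from the definition~\eqref{eq:comparison:subgradient} of $\xi^{k+1}$. Expanding the square of the right-hand side, subtracting from the expression~\eqref{eq:V} for $\V_k$, and noting that the two terms $\gamma^{-2}\norm{z^{k}-z^{k+1}}^{2}$ cancel, the two cross inner products combine into $2\gamma^{-1}\inner{z^{k}-z^{k+1}}{F(z^{k})-F(z^{k+1})}$, leaving
\[
    \V_k - \norm{F(z^{k+1})+\xi^{k+1}}^2
    = 2\gamma^{-1}\inner{z^{k+1}-z^{k}}{F(z^{k+1})-F(z^{k})}
    + \gamma^{-2}\norm{z^{k+1}-\bar z^{k}}^2 - \norm{F(z^{k+1})-F(\bar z^{k})}^2 .
\]
The first term on the right is nonnegative by monotonicity of $F$, and the last two combine to $(\gamma^{-2}-L_F^2)\norm{z^{k+1}-\bar z^{k}}^2 \ge 0$ since $F$ is $L_F$-Lipschitz and $\gamma L_F < 1$ implies $L_F^2 < \gamma^{-2}$. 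This proves \ref{prop:tsengres_upper_V}.

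For \ref{prop:natres_upper_tanres}, the key observation is that $z = \prox_{g}(z+\xi)$ whenever $\xi\in\partial g(z)$, directly from the subgradient characterization of $\prox_g$ (with step size $1$). Hence, for any such $\xi$, nonexpansiveness of $\prox_g$ gives $\norm{z-\prox_g(z-F(z))} = \norm{\prox_g(z+\xi)-\prox_g(z-F(z))} \le \norm{\xi+F(z)}$, and taking the infimum over $\xi\in\partial g(z)$ finishes the item. For \ref{prop:gap_upper_tanres}, I would fix $\xi\in\partial g(z)$ and estimate the integrand of~\eqref{eq:gap_functions}: the subgradient inequality gives $g(z)-g(w)\le\inner{\xi}{z-w}$ for every $w$, while monotonicity of $F$ gives $\inner{F(w)}{z-w}\le\inner{F(z)}{z-w}$; adding these, $\inner{F(w)}{z-w}+g(z)-g(w)\le\inner{F(z)+\xi}{z-w}\le\norm{F(z)+\xi}\,\norm{z-w}$, and $\norm{z-w}\le\norm{z-z^\star}+\delta$ on $\mathbb{B}(z^\star;\delta)$. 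Taking the supremum over $w$ and then the infimum over $\xi\in\partial g(z)$ yields the first bound. The displayed consequence for $z^{k}$ with $k\in\mathbb{N}$ then follows because iterating~\eqref{eq:EG:Lyapunov_with_solution} (legitimate since $\gamma\in(0,1/L_F)$, so the coefficient $1-\gamma^2 L_F^2$ is nonnegative) shows $\norm{z^{k}-z^\star}\le\norm{z^{0}-z^\star}=\delta$, hence $\delta+\norm{z^{k}-z^\star}\le 2\delta$.

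Overall the argument is essentially routine; the only mild obstacle is the algebra in \ref{prop:tsengres_upper_V}, where one must check that the cross terms reorganize precisely into a single monotonicity term plus a remainder that is annihilated by the condition $\gamma L_F<1$, and the minor point that the sharp constant $2\delta$ in \ref{prop:gap_upper_tanres} is available only along the iterates (for $k\ge 1$) via Fej\'er monotonicity, not as a pointwise statement.
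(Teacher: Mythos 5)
Your proof is correct and follows essentially the same route as the paper's: item \textit{(i)} uses the identical decomposition \(F(z^{k+1})+\xi^{k+1}=\bigl(F(z^{k+1})-F(\bar z^{k})\bigr)+\gamma^{-1}(z^{k}-z^{k+1})\) together with monotonicity and \(\gamma L_F<1\) (merely reorganized as a single difference identity rather than a chain of upper bounds), and items \textit{(ii)} and \textit{(iii)} reproduce the paper's nonexpansiveness and monotonicity/subgradient/Cauchy--Schwarz arguments, with the \(2\delta\) constant obtained from the Fej\'er inequality exactly as in the paper.
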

\begin{proof}
    The proofs of \Cref{prop:natres_upper_tanres,prop:gap_upper_tanres} are simple generalizations of corresponding results found in \cite{sedlmayer2023fastoptimistic}, which we include for completeness. 
    \begin{itemize}
        \item[\ref{prop:tsengres_upper_V}]
        Note that 
        \begin{align*}
             & \|F(z^{k+1})+ \xi^{k+1}\|^2  \\
             & = \|\tfrac{1}{\gamma}\big(z^{k}-z^{k+1}\big)+F(z^{k+1})-F(\bar{z}^{k})\|^{2}
             \\ 
             & =
             \|F(z^{k+1})-F(\bar{z}^{k})\|^{2}+\tfrac{1}{\gamma^{2}}\|z^{k}-z^{k+1}\|^{2}+\tfrac{2}{\gamma}\langle F(z^{k+1})-F(\bar{z}^{k}),z^{k}-z^{k+1}\rangle
             \\ 
             & \leq
             L_{F}^{2}\|z^{k+1}-\bar{z}^{k}\|^{2}+\tfrac{1}{\gamma^{2}}\|z^{k}-z^{k+1}\|^{2}+\tfrac{2}{\gamma}\langle F(z^{k+1})-F(\bar{z}^{k}),z^{k}-z^{k+1}\rangle
             \\ 
             & \leq
             \tfrac1{\gamma^{2}}\|z^{k+1}-\bar{z}^{k}\|^{2}+\tfrac{1}{\gamma^{2}}\|z^{k}-z^{k+1}\|^{2}+\tfrac{2}{\gamma}\langle F(z^{k})-F(\bar{z}^{k}),z^{k}-z^{k+1}\rangle
             \\ 
             & = \mathcal V_k
        \end{align*} 
        where Lipschitz continuity of \(F\) was used in the first inequality,  and \(\gamma L_{F} \leq 1\) and monotonicity of \(F\) was used in the second inequality.
        \item[\ref{prop:natres_upper_tanres}]
        We claim that the natural residual is upper bounded by the tangent residual, i.e., 
        \begin{align}\label{eq:natural_leq_tangent}
            \|z - \mathrm{prox}_{g}(z - F(z))\| \leq \inf_{\xi \in \partial g(z)} \|F(z)+ \xi\|
        \end{align}
        for each \(z\in\calH\).
        When \(z \notin \mathrm{dom}\,\partial g\), then \eqref{eq:natural_leq_tangent} holds trivially. Thus, suppose that \(z \in \mathrm{dom}\,\partial g\) and let \(\xi \in \partial g (z)\). The latter inclusion holds if and only if \(z = \mathrm{prox}_{g}(z + \xi)\). Therefore, 
        \begin{align*}
            \|z - \mathrm{prox}_{g}(z - F(z))\| 
            = \| \mathrm{prox}_{g}(z + \xi) - \mathrm{prox}_{g}(z - F(z))\| 
            \leq \|F(z) + \xi \|
        \end{align*}
        where the inequality follows from the nonexpansivity of the proximal operator \cite[Proposition 12.28]{bauschke2017convexanalysismonotone}. 
        However, since \(\xi \in \partial g (z)\) is arbitrary, \eqref{eq:natural_leq_tangent} follows. 
        \item[\ref{prop:gap_upper_tanres}]
        We claim that the restricted gap function is upper bounded by the tangent residual up to a positive quantity, i.e., 
        \begin{align}\label{eq:restricted_gap_function_leq_tangent_residual}
            \texttt{Gap}(z) \leq \p{\delta + \norm{z-z^{\star}}} \inf_{\xi \in \partial g(z)} \|F(z)+ \xi\|
        \end{align}
        for each \(z\in\calH\), and therefore,
        \begin{align}\label{eq:restricted_gap_function_leq_tangent_residual:k}
            \textup{\texttt{Gap}}(z^{k}) \leq 2\delta \inf_{\xi \in \partial g(z^{k})} \|F(z^{k})+ \xi\|
        \end{align}
        for each \(k\in \mathbb{N}\), by \Cref{prop:EG:primal_descent}.
        Let us prove \eqref{eq:restricted_gap_function_leq_tangent_residual}. If $z \notin \mathrm{dom}\,\partial g$, then \eqref{eq:restricted_gap_function_leq_tangent_residual} holds trivially. Thus, assume that $z \in \mathrm{dom}\,\partial g$, and let \(\xi \in \partial g(z)\) and \(w\in \mathrm{dom}\,\partial g \cap \mathbb{B}(z^{\star};\delta) \). Then we have that 
        \begin{align}
            &\inner{F(w)}{z-w} + g(z) - g(w) \nonumber\\
            & = \underbracket{\inner{F(w) - F(z)}{z-w}}_{\leq 0} + \inner{F(z)}{z-w} + \underbracket{g(z) - g(w)}_{\leq \inner{\xi}{z - w}} \nonumber\\
            &\leq \inner{F(z)+\xi}{z-z^{\star}} + \inner{F(z)+\xi}{z^{\star}-w} \nonumber\\
            &\leq \norm{F(z)+\xi}\norm{{z-z^{\star}}} + \norm{F(z)+\xi}\underbracket{\norm{z^{\star}-w}}_{\leq \delta} \nonumber\\
            &\leq \p{\delta + \norm{z-z^{\star}}} \|F(z)+ \xi\|
            \label{eq:restricted_gap_function_leq_tangent_residual:helper},
        \end{align}
        Maximizing over \(w\in \mathrm{dom}\,\partial g \cap \mathbb{B}(z^{\star};\delta) \) and minimizing over \(\xi \in \partial g(z)\) in \eqref{eq:restricted_gap_function_leq_tangent_residual:helper} gives \eqref{eq:restricted_gap_function_leq_tangent_residual}, as claimed. \qedhere
    \end{itemize}
\end{proof}

\begin{corollary}\label{cor:rates_standard_measures}
    Suppose that \Cref{ass:main} and \(\zer\p{F + \partial g}\neq \emptyset\) hold, the sequence \(\p{\p{z^{k},\bar{z}^{k}}}_{k\in\naturals}\) is generated by~\eqref{alg:EG} with initial point \(z^0\in \calH\) and step-size parameter \(\gamma\in(0,1/L_{F})\), and the sequence \(\p{\xi^{k}}_{k\in\mathbb{N}}\) is given by \eqref{eq:comparison:subgradient}. Then,
    \begin{align*}
        \norm{F(z^{k}) + \xi^{k}} & \in o\p{1/\sqrt{k}} \text{ as } k \to \infty,\\
        \inf_{\xi \in \partial g(z^{k})} \|F(z^{k})+ \xi\| & \in o\p{1/\sqrt{k}} \text{ as } k \to \infty, \\
        \norm{z^k - \prox_{g}(z - F(z^k))} & \in o\p{1/\sqrt{k}} \text{ as } k \to \infty, \\
        \textup{\texttt{Gap}}(z^{k}) & \in o\p{1/\sqrt{k}} \text{ as } k \to \infty,
    \end{align*}
    and for any \(k\in\mathbb{N}\) and \(z^{\star}\in\zer\p{F + \partial g}\) it holds that  
    \begin{gather*}
        \norm{z^k - \prox_{g}(z - F(z^k))} 
        \leq 
        \inf_{\xi \in \partial g(z^{k})} \|F(z^{k})+ \xi\| 
        \leq 
        \norm{F(z^{k}) + \xi^{k}} 
        \leq 
        \frac{\norm{z^{0} - z^{\star} } }{ \sqrt{\alpha\p{ \gamma, L_{F}} k} }, \\
        \textup{\texttt{Gap}}(z^{k})
        \leq 
        \frac{2\delta\norm{z^{0} - z^{\star} } }{ \sqrt{\alpha\p{ \gamma, L_{F}} k} },
    \end{gather*}
    where \(\alpha\p{ \gamma, L_{F}}=\tfrac{\gamma^{2}}{2}\p{\sqrt{5-4\gamma^2 L_{F}^2 } - 1}>0\) and \(\textup{\texttt{Gap}}\) is defined in \eqref{eq:gap_functions}.
    \begin{proof}
        Follows immediately from \eqref{eq:tanres_upper_tseng_res}, \Cref{prop:comp:bounds}, and \Cref{cor:lastiter:T}.
    \end{proof}
\end{corollary}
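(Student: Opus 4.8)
The plan is to assemble the claimed estimates by chaining three ingredients already in hand: the elementary bound \eqref{eq:tanres_upper_tseng_res}, the comparison inequalities of \Cref{prop:comp:bounds}, and the last-iterate rate of \Cref{cor:lastiter:T}. The only real bookkeeping point is an index shift: the multiplier \(\xi^{k}\) of \eqref{eq:comparison:subgradient} is defined for \(k\in\mathbb{N}\), and \Cref{prop:tsengres_upper_V} bounds \(\norm{F(z^{k})+\xi^{k}}^{2}\) by \(\V_{k-1}\) rather than \(\V_{k}\).

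First I would fix \(k\in\mathbb{N}\) and \(z^{\star}\in\zer(F+\partial g)\) and record the chain
\[
  \norm{z^{k}-\prox_{g}(z^{k}-F(z^{k}))}
  \;\leq\;
  \inf_{\xi\in\partial g(z^{k})}\norm{F(z^{k})+\xi}
  \;\leq\;
  \norm{F(z^{k})+\xi^{k}}
  \;\leq\;
  \sqrt{\V_{k-1}},
\]
where the first inequality is \Cref{prop:natres_upper_tanres}, the second is \eqref{eq:tanres_upper_tseng_res}, and the third is \Cref{prop:tsengres_upper_V}. Substituting the explicit bound \(\V_{k-1}\leq\norm{z^{0}-z^{\star}}^{2}/(\alpha(\gamma,L_{F})k)\) from \eqref{eq:Vk:last_iterate} (evaluated at index \(k-1\), so that \((k-1)+1=k\)) then yields the displayed inequality for the natural residual, the tangent residual, and \(\norm{F(z^{k})+\xi^{k}}\). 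For the restricted gap function I would invoke the second assertion of \Cref{prop:gap_upper_tanres}, \(\texttt{Gap}(z^{k})\leq 2\delta\inf_{\xi\in\partial g(z^{k})}\norm{F(z^{k})+\xi}\) — which in turn relies, via \Cref{prop:EG:primal_descent}, on the quasi-Fej\'er bound \(\norm{z^{k}-z^{\star}}\leq\norm{z^{0}-z^{\star}}=\delta\) — and combine it with the chain above to obtain \(\texttt{Gap}(z^{k})\leq 2\delta\norm{z^{0}-z^{\star}}/\sqrt{\alpha(\gamma,L_{F})k}\).

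For the \(o(1/\sqrt{k})\) statements I would start from \(\V_{k}\in o(1/k)\) in \eqref{eq:Vk:last_iterate:little-o}, note that a unit index shift preserves this rate since \(k\V_{k-1}=\tfrac{k}{k-1}(k-1)\V_{k-1}\to0\), hence \(\norm{F(z^{k})+\xi^{k}}^{2}\leq\V_{k-1}\in o(1/k)\) and therefore \(\norm{F(z^{k})+\xi^{k}}\in o(1/\sqrt{k})\); the remaining three \(o\)-rates then follow from the same chain, each being dominated by \(\norm{F(z^{k})+\xi^{k}}\) up to the fixed factor \(2\delta\) in the gap case. There is no genuine obstacle here: everything is a direct consequence of results proved earlier, and the only points requiring care are the index shift and the observation that the \(\texttt{Gap}\) estimate is recorded along the iterate sequence precisely because that is where \Cref{prop:EG:primal_descent} supplies the bound \(\norm{z^{k}-z^{\star}}\leq\delta\).
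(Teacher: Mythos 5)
Your proposal is correct and follows exactly the route the paper takes: its proof is the one-line combination of \eqref{eq:tanres_upper_tseng_res}, \Cref{prop:comp:bounds}, and \Cref{cor:lastiter:T}, which you simply spell out in detail (including the index shift from \(\V_{k-1}\) to \(\norm{F(z^{k})+\xi^{k}}^{2}\) that the paper leaves implicit).
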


\begin{remark}\label{rmk:last-iterate_rates}
    In the items listed below, we compare the recent last-iterate rates in \cite[Theorem 3]{cai2022tightlastiterateconvergenceextragradient} and \cite[Corollary 4.1(b)]{trandinh2024revisitingextragradienttype} with \Cref{cor:rates_standard_measures}. In the following, assume that \Cref{ass:main} holds, the sequence \(\p{\p{z^{k},\bar{z}^{k}}}_{k\in\naturals}\) is generated by~\eqref{alg:EG} with initial point \(z^0\in \calH\) and step-size parameter \(\gamma\in(0,1/L_{F})\), and the sequence \(\p{\xi^{k}}_{k\in\mathbb{N}}\) is given by \eqref{eq:comparison:subgradient}.
    \begin{enumerate}
        \item \cite[Theorem 3]{cai2022tightlastiterateconvergenceextragradient} proves that 
        \begin{align}\label{eq:rate_cai}
            \inf_{\xi \in \partial g(z^{k})} \|F(z^{k})+ \xi\| \leq \frac{3 \norm{z^{0} - z^{\star} } }{ \gamma \sqrt{\p{1-\gamma^{} L_{F}^{2}} k }  }
        \end{align}
        for each \(k\in\mathbb{N}\) and \(z^{\star}\in\zer\p{F + \partial g}\), in the particular case when \(g\) is the indicator function of a closed, convex, and nonempty set. Simple computation shows that the corresponding rate in \Cref{cor:rates_standard_measures} sharpens \eqref{eq:rate_cai}, i.e.,
        \begin{align}\label{eq:rate_cai:comparison}
            \frac{1 }{ \sqrt{\alpha\p{ \gamma, L_{F}}} } < \frac{3}{ \gamma \sqrt{1-\gamma^{2} L_{F}^{2} }  }
        \end{align}
        for any \(\gamma \in (0,1/L_{F})\). For convenience, we plot the quotient 
        \begin{align}\label{eq:rate_cai:comparison:quotient}
            \left.\frac{1 }{ \sqrt{\alpha\p{ \gamma, L_{F}}} } \middle/  \frac{3}{ \gamma \sqrt{1-\gamma^{2} L_{F}^{2}}} \right.
        \end{align}
        in \Cref{fig:rate_quotient:a}, proving \eqref{eq:rate_cai:comparison} graphically.

        \item \cite[Corollary 4.1(b)]{trandinh2024revisitingextragradienttype} proves that 
        \begin{align}\label{eq:rate_trandinh}
            \|F(z^{k})+ \xi^{k}\| \leq \frac{\sqrt{3+2\gamma^2L_{F}^{2}} \norm{z^{0} - z^{\star} } }{ \gamma \sqrt{\p{1-\gamma L_{F}} k }  }
        \end{align}
        for each \(k\in\mathbb{N}\) and \(z^{\star}\in\zer\p{F + \partial g}\). Simple computation shows that the corresponding rate in \Cref{cor:rates_standard_measures} sharpens \eqref{eq:rate_trandinh}, i.e.,
        \begin{align}\label{eq:rate_trandinh:comparison}
            \frac{1 }{ \sqrt{\alpha\p{ \gamma, L_{F}}} } < \frac{\sqrt{3+2\gamma^2L_{F}^{2}} }{ \gamma \sqrt{1-\gamma L_{F}}}
        \end{align}
        for any \(\gamma \in (0,1/L_{F})\). For convenience, we plot the quotient 
        \begin{align}\label{eq:rate_trandinh:comparison:quotient}
            \left.\frac{1 }{ \sqrt{\alpha\p{ \gamma, L_{F}}} } \middle/  \frac{\sqrt{3+2\gamma^2L_{F}^{2}} }{ \gamma \sqrt{1-\gamma L_{F}  }   } \right.
        \end{align}
        in \Cref{fig:rate_quotient:b}, proving \eqref{eq:rate_trandinh:comparison} graphically.
    \end{enumerate}

\end{remark}

\begin{figure}[!t]
  \centering

  \begin{subfigure}{0.99\linewidth}
    \centering
    \begin{tikzpicture}
      \pgfmathdeclarefunction{q}{1}{%
        \pgfmathparse{(1.0/3.0)*sqrt( 2*(1 - #1*#1) / (sqrt(5 - 4*#1*#1) - 1) )}%
      }

      \begin{axis}[
        width=\linewidth,
        height=0.5\linewidth,
        domain=0.001:0.999,            
        xmin=0, xmax=1,                
        ymin=0.3, ymax=0.5, 
        samples=1200,
        ymode=log,
        log ticks with fixed point,
        axis lines=left,
        xlabel={$\gamma L_F$},
        ylabel={},
        tick align=outside,
        tick style={black},
        xtick={0,0.1,...,1},
        minor x tick num=9,
        grid=both,
        grid style={dashed,black!35},
        minor grid style={dotted,black!25},
        legend style={
          draw=none,
          fill=white, fill opacity=0.9, text opacity=1,
          font=\small,
          at={(0.98,0.98)}, anchor=north east,
          row sep=8pt, inner sep=8pt,
          cells={anchor=west}
        },
        legend image post style={xscale=1.8},
        every axis plot/.append style={color=black, mark=none, line cap=round, line join=round},
        clip=false
      ]

        \addplot[ultra thick, restrict x to domain=0.001:0.999] ({x}, {q(x)});
        \addlegendentry{$\displaystyle
          \left.\frac{1}{\sqrt{\alpha(\gamma,L_F)}} \middle/ \frac{3}{\gamma\sqrt{1-\gamma^{2}L_F^{2}}}\right.
          \;=\; \frac{1}{3}\sqrt{\frac{2\bigl(1-\gamma^{2} L_F^{2}\bigr)}{\sqrt{5-4\gamma^{2} L_F^{2}}-1}}$}

      \end{axis}
    \end{tikzpicture}
    \caption{
    The ratio between the coefficients of the last-iterate rate in \Cref{cor:rates_standard_measures} and that of \cite[Theorem 3]{cai2022tightlastiterateconvergenceextragradient}.
    }
    \label{fig:rate_quotient:a}
  \end{subfigure}

  \vspace{0.6em}

  \begin{subfigure}{0.99\linewidth}
    \centering
    \begin{tikzpicture}
      \pgfmathdeclarefunction{qb}{1}{%
        \pgfmathparse{ sqrt( 2*(1-#1) / ((sqrt(5-4*#1*#1)-1)*(3+2*#1*#1)) ) }%
      }

      \begin{axis}[
        width=\linewidth,
        height=0.5\linewidth,
        domain=0.001:0.999,            
        xmin=0, xmax=1,                
        ymin=0.2, ymax=1.3,
        samples=1200,
        ymode=log,
        log ticks with fixed point,
        axis lines=left,
        xlabel={$\gamma L_F$},
        ylabel={},
        tick align=outside,
        tick style={black},
        xtick={0,0.1,...,1},
        minor x tick num=9,
        grid=both,
        grid style={dashed,black!35},
        minor grid style={dotted,black!25},
        legend style={
          draw=none,
          fill=white, fill opacity=0.9, text opacity=1,
          font=\small,
          at={(0.98,0.98)}, anchor=north east,
          row sep=8pt, inner sep=8pt,
          cells={anchor=west}
        },
        legend image post style={xscale=1.8},
        every axis plot/.append style={color=black, mark=none, line cap=round, line join=round},
        clip=false
      ]

        \addplot[ultra thick, restrict x to domain=0.001:0.999] ({x}, {qb(x)});
        \addlegendentry{$\displaystyle
          \left.\frac{1}{\sqrt{\alpha(\gamma,L_F)}} \middle/ \frac{\sqrt{3+2\gamma^{2}L_F^{2}}}{\gamma\sqrt{1-\gamma L_F}}\right.
          \;=\; \sqrt{\frac{2\bigl(1-\gamma L_F\bigr)}{\bigl(\sqrt{5-4\gamma^{2} L_F^{2}}-1\bigr)\,\bigl(3+2\gamma^{2} L_F^{2}\bigr)}}$}

      \end{axis}
    \end{tikzpicture}
    \caption{
    The ratio between the coefficients of the last-iterate rate in \Cref{cor:rates_standard_measures} and that of \cite[Corollary 4.1(b)]{trandinh2024revisitingextragradienttype}.
    }
    \label{fig:rate_quotient:b}
  \end{subfigure}

  \caption{}
  \label{fig:rate_quotients}
\end{figure}

\begin{remark}
    Similar to \Cref{rmk:cyclically_monotone}, the results in this section remain valid (except the ones involving the restricted gap function \(\textup{\texttt{Gap}}\)) when \(\partial g\) in \eqref{eq:the_inclusion_problem} is replaced with a maximally monotone and 3-cyclically monotone operator \(T:\calH\to\calH\) and the proximal operators \(\prox_{\gamma g}\) in \eqref{alg:EG} with the resolvent \(\p{\Id + \gamma T}^{-1}\).
\end{remark}
}%

\end{appendix}

\section*{Acknowledgments}
M. Upadhyaya and P. Giselsson acknowledge support from the ELLIIT Strategic Research Area and the Wallenberg AI, Autonomous Systems, and Software Program (WASP), funded by the Knut and Alice Wallenberg Foundation. Additionally, P. Giselsson acknowledges support from the Swedish Research Council. P. Latafat is a member of the Gruppo Nazionale per l'Analisi Matematica, la Probabilit\`a e le loro Applicazioni (GNAMPA - National Group for Mathematical Analysis, Probability and their Applications) of the Istituto Nazionale di Alta Matematica (INdAM - National Institute of Higher Mathematics). We thank Yurong Chen for highlighting some details in~\Cref{sec:equilibrium}. \new{We also thank Max Nilsson, Sebastian Banert, and the anonymous reviewers for providing valuable comments and suggestions for a previous version of this work.}


    \phantomsection
    \addcontentsline{toc}{section}{References}
        \bibliographystyle{abbrvurl}
    \bibliography{TeX/mybib.bib}

\appendix

\end{document}